\documentclass[leqno,final]{siamltex}
%\documentclass[leqno,draft]{siamltex}
%\pdfoutput=1
%\usepackage{showkeys}
%\usepackage{showlabels}
%\usepackage{epstopdf}
%\usepackage{appendix}
\pagestyle{myheadings}
\usepackage{amsmath,amssymb}
\usepackage{graphicx}
\setlength{\hoffset}{.7in}

\newtheorem{remark}{Remark}[section]

% MATH -------------------------------------------------------------------

\newcommand{\norm}[1]{\left\Vert#1\right\Vert}
\newcommand{\abs}[1]{\left\vert#1\right\vert}

\newcommand{\hM}{\hat{M}}

\newcommand{\E}{\mathbb{E}}
\newcommand{\cE}{\mathcal{E}}
\newcommand{\cS}{\mathcal{S}}
\newcommand{\cT}{\mathcal{T}}
\newcommand{\Div}{{\rm div}}
\newcommand{\veps}{\varepsilon}
\newcommand{\Ome}{\Omega}
\newcommand{\ome}{\omega}
\newcommand{\p}{\partial}
\newcommand{\nab}{\nabla}
\renewcommand{\i}{{\rm\mathbf i}}
\renewcommand{\Im}{{\rm Im }}
\renewcommand{\Re}{{\rm Re }}

\def\be{\begin{equation}}
\def\ee{\end{equation}}
\def\br{\begin{eqnarray}}
\def\er{\end{eqnarray}}

\def\Langle{\left\langle}

\def\Rangle{\right\rangle}

%\setcounter{equation}{0}
%\numberwithin{equation}{section}

%%%%%
\begin{document}

\title{An efficient numerical method for acoustic wave scattering in random media}
\markboth{XIAOBING FENG, JUNSHAN LIN and CODY LORTON}{Numerical Methods for
a Random Helmholtz Equation}

\author{
Xiaobing Feng\thanks{Department of Mathematics, The University of Tennessee,
Knoxville, TN 37996, U.S.A. ({\tt xfeng@math.utk.edu}) The work of
this author was partially supported by the NSF grants DMS-1016173 and DMS-1318486.}
\and
Junshan Lin\thanks{Department of Mathematics and Statistics, Auburn University,
Auburn, AL 36849, U.S.A. ({\tt jzl0097@auburn.edu}) }
\and
Cody Lorton\thanks{Department of Mathematics, The University of Tennessee,
Knoxville, TN 37996, U.S.A. ({\tt lorton@math.utk.edu}) The work of
this author was partially supported by the NSF grants DMS-1016173 and DMS-1318486.}
}

\maketitle

\begin{abstract}
This paper is concerned with developing efficient numerical methods for
acoustic wave scattering in random media which can be expressed as
random perturbations of homogeneous media. We first analyze the
random Helmholtz problem by deriving some wave-number-explicit
solution estimates. We then establish a multi-modes representation of
the solution as a power series of the perturbation parameter and analyze
its finite modes approximations. Based on this multi-modes representation,
we develop a Monte Carlo interior penalty discontinuous Galerkin (MCIP-DG)
method for approximating the mode functions, which are governed by
recursively defined nearly deterministic Helmholtz equations.
Optimal order error estimates are derived for the method and
an efficient algorithm, which is based on the LU direct solver,
is also designed for efficiently implementing the proposed
multi-modes MCIP-DG method. It is proved that the computational complexity
of the whole algorithm is comparable to that of solving one
deterministic Helmholtz problem using the LU director solver.
Numerical experiments are provided to validate the theoretical
results and to gauge the performance of the proposed numerical
method and algorithm.
\end{abstract}

\begin{keywords}
Helmholtz equation, random media, Rellich identity,
discontinuous Galerkin method, error estimate, LU decomposition,
Monte Carlo method.
\end{keywords}

\begin{AMS}
65N12, %Stability and convergence of numerical methods
65N15, %Error bounds
65N30, %Finite elements, Rayleigh-Ritz and Galerkin methods, finite methods
\end{AMS}

%%%%%%%%%%%
\section{Introduction}\label{sec-1}

Partial differential equations with random coefficients arise naturally in the modeling of many
physical phenomena. This is due to the fact that some level of uncertainty is usually involved if the 
knowledge of the physical behavior is not complete or when noise is present in the experimental 
measurements.  In recent years, substantial progress has been made in the numerical 
approximation of such PDEs due to the significant development in computational resources. 
We refer to \cite{Babuska_Nobile_Tempone10,Babuska_Tempone_Zouraris04,Caflisch,Ronan_Sarkis, 
Xiu_Karniadakis} and references therein for more details.

In this paper, we consider the propagation of the acoustic wave in a medium where the 
wave velocity is characterized by a random process. More precisely, we study the 
approximation of the solution to the following Helmholtz problem:
\begin{alignat}{2}\label{eq1.1}
-\Delta u(\omega,\cdot) - k^2  \alpha(\ome,\cdot)^2 u(\omega,\cdot)
&= f(\omega,\cdot)  &&\qquad \mbox{in } D, \\
\p_\nu u(\omega,\cdot) +\i k \alpha(\ome,\cdot) u(\omega,\cdot)
&= 0 &&\qquad \mbox{on } \partial D, \label{eq1.2}
\end{alignat}
where $k$ is the wavenumber, and $D\subset\mathbb{R}^d \,(d=1,2,3)$ is
a convex bounded polygonal domain with boundary $\partial D$.
Let $(\Ome,\mathcal{F}, P)$ be a probability space with sample space $\Ome$,
$\sigma-$algebra $\mathcal{F}$ and probability measure $P$. For each fixed $x\in D$,
the refractive index $\alpha(\cdot,x)$ is a real-valued random variable
defined over $\Ome$. We assume that the medium is
a small random perturbation of a uniform background medium in the sense that
\begin{align}\label{eq1.3}
\alpha(\ome, \cdot):=1+\veps\eta(\omega,\cdot).
\end{align}
Here $\veps$ represents the magnitude of the random fluctuation, and 
$\eta\in L^2(\Omega,L^{\infty}(D))$ is some %compactly supported 
random process satisfying
\[
P\left\{\ome\in\Ome;\, \norm{\eta(\ome,\cdot)}_{L^{\infty}(D)}
\le 1 \right\}=1.
\]
For notation brevity we only consider the case that $\eta$ is
real-valued. However, we note that the results of this paper are also
valid for complex-valued $\eta$. 
On the boundary $\partial D$, a radiation boundary
%condition is imposed to absorb incoming waves \cite{em79}. 
%In the case of $g = 0$, a radiation boundary
condition is imposed
%on the boundary $\partial D$%
to absorb incoming waves \cite{em79}.
Here $\nu$ denotes the unit outward normal to $\partial D$, and $\p_\nu u$ stands for
the normal derivative of $u$. The boundary value problem \eqref{eq1.1}--\eqref{eq1.2} 
arises in the modeling of the wave propagation in complex environments, such as 
composite materials, oil reservoir and geological basins \cite{FGPS, Ishimaru}.
In such instances, it is of practical interest to characterize the uncertainty of 
the wave energy transport when the medium contains some randomness. In particular, 
we are interested in the computation of some statistics
of the wave field, e.g, the mean value of the solution $u$.

To solve stochastic (or random) partial differential equations (SPDEs) numerically, 
the simplest and most natural approach is to use the Monte Carlo method, where a 
set of independent identically distributed (i.i.d.) solutions are obtained by sampling 
the PDE coefficients, and the mean of the solution is calculated via a statistical 
average over all the sampling in the probability space \cite{Caflisch}. An alternative is 
the stochastic Galerkin method, where the SPDE is reduced to a high dimensional deterministic 
equation by expanding the random field in the equation using the Karhunen-Lo\`{e}ve or 
Wiener Chaos expansions.  We refer the reader to \cite{Babuska_Nobile_Tempone10,
Babuska_Tempone_Zouraris04,EEU,Ronan_Sarkis, Xiu_Karniadakis} for detailed discussions.
However, it is known that a brute-force Monte Carlo or stochastic Galerkin method applied 
directly to the Helmholtz equation with random coefficients is computationally prohibitive 
even for a moderate wavenumber $k$, since a large number of degrees of freedom is involved 
in the spatial discretization.  It is apparent that in such cases, the Monte Carlo method 
requires solving a PDE with many sampled coefficients, while the high 
dimensional deterministic equation associated with the stochastic Galerkin method will be 
too expensive to be solved.

In this paper, we propose an efficient numerical method for solving the Helmholtz 
problem \eqref{eq1.1}--\eqref{eq1.2} when the medium is weakly random defined 
by \eqref{eq1.3}. A multi-modes representation of the solution is derived, where each 
mode is governed by a Helmholtz equation with deterministic coefficients and a random source.
We develop a Monte Carlo interior penalty discontinuous Galerkin (MCIP-DG) method for 
approximating the mode functions.  In particular, we take the advantage that the 
coefficients of the Helmholtz equation for all the modes are identical,
hence the associated discretized equations share the same constant
coefficient matrix. Using this crucial fact, it is observed that an LU direct solver 
for the discretized equations leads to a tremendous saving in the computational costs,
since the LU decomposition matrices can be used repeatedly, and the solutions for all 
modes and all samples can be obtained in an efficient way by performing simple forward
and backward substitutions. Indeed, it turns out that the computational complexity
of the proposed algorithm is comparable to that of solving one
deterministic Helmholtz problem using the LU direct solver. 

The rest of the paper is organized as follow. A wave-number-explicit estimate for the 
solution of the random Helmholtz equation is established in Section \ref{sec-2}. 
In Section \ref{sec-3}, we introduce the multi-modes expansion of the solution as 
a power series of $\veps$ and analyze the error estimation for its finite-modes approximation.
The Monte Carlo interior penalty discontinuous Galerkin method is presented in Section 
\ref{sec-4}, where the error estimates for the approximation of each mode function is also
obtained.  In Section \ref{sec-5}, a numerical procedure for solving 
\eqref{eq1.1}--\eqref{eq1.2} is described and its computational complexity is analyzed
in detail.  In addition, we derive an optimal order error estimates for the proposed 
procedure.  Several numerical experiments are provided in Section 6 to demonstrate 
the efficiency of the method and to validate the theoretical results. 

%%%%%%%%
\section{PDE analysis}\label{sec-2}

\subsection{Preliminaries}\label{sec-2.1}
Standard function and space notations are adopted in this paper. For example,
$H^s(D)$ denotes the complex-valued Sobolev space and $L^2(D)=H^0(D)$.
$(\cdot,\cdot)_S$ stands for the standard inner product on the complex-valued
$L^2(S)$ space for any subset $S$ of $D$.  $\,C$ and $c$ denote generic constants 
which are independent of $k$ and the mesh parameter $h$. We also define spaces
\begin{align}\label{eq2.14}
H^1_+(D) &:=\bigl\{v\in H^1(D);\, |\nab v|\bigr|_\Gamma \in L^2(\p D) \bigr\},\\
V &:= \big\{ v\in H^1(D); \, \Delta v\in L^2(D) \bigr\}.
\end{align}

Without loss of generality, we assume that the domain $D\subset B_R(0)$.
Throughout this paper we also assume that $D$ is a star-shaped domain
with respect to the origin in the sense that there exists a positive
constant $c_0$ such that
\[
x\cdot \nu\ge c_0 \quad\mbox{on } \partial D,
\]
Let $(\Ome, \mathcal{F}, P)$ be a probability space on which all the random
variables of this paper are defined. $\E(\cdot)$ denotes the expectation operator.
The abbreviation {\em a.s.} stands for {\em almost surely}.

As it will be needed in the late sections of the paper, in this section
we analyze the boundary value problem for the Helmholtz equation \eqref{eq1.1} 
with the following slightly more general nonhomogeneous boundary condition:
\begin{equation}\label{eq1.2a}  
\p_\nu u(\omega, \cdot) + \i k \alpha(\omega,\cdot) u(\omega,\cdot)=g(\omega,\cdot).
\end{equation}

\begin{definition}\label{def2.1}
Let $f\in L^2(\Ome, L^2(D))$ and $g \in L^2(\Ome, L^2(\partial D))$. A 
function $u\in L^2(\Ome, H^1(D))$ is called a weak solution to problem 
\eqref{eq1.1},\eqref{eq1.2a} if it satisfies the following identity:
\begin{equation}\label{eq2.1}
\int_\Ome a(u,v)\, dP = \int_\Ome \big((f, v)_D + \langle g, v \rangle_{\partial D} \big)\, dP \qquad\forall
v\in L^2(\Ome, H^1(D)),
\end{equation}
where
\begin{align}\label{eq2.2}
a(w,v) &:= \bigl(\nab w, \nab v\bigr)_D -k^2 \bigl( \alpha^2 w, v \bigr)_D
+\i k\Langle \alpha w, v\Rangle_{\p D}.
\end{align}
\end{definition}

\begin{remark}\label{rem2.1}
Using \eqref{eq2.4} below, it is easy to show that any solution $u$
of \eqref{eq1.1},\eqref{eq1.2a} satisfies $u\in L^2(\Ome, H^1_+(D)\cap V)$.
\end{remark}

%%%%%%%%
\subsection{Wave-number-explicit solution estimates}\label{sec-2.2}
In this subsection we shall derive stability estimates for the solution of
problem \eqref{eq1.1},\eqref{eq1.2a} which is defined in Definition \ref{def2.1}.
Our focus is to obtain explicit dependence of the stability constants
on the wave number $k$, such wave-number-explicit stability estimates
will play a vital role in our convergence analysis in the later sections.
We note that wave-number-explicit stability estimates also play
a pivotal role in the development of numerical methods,
such as finite element and discontinuous Galerkin methods,
for deterministic reduced wave equations (cf. \cite{Feng_Wu09,Feng_Wu11}).
As a byproduct of the stability estimates, the existence and
uniqueness of solutions to problem \eqref{eq1.1},\eqref{eq1.2a}
can be conveniently established.

\begin{lemma}\label{lem2.1}
Let $u\in L^2(\Omega,H^1(D))$ be a solution of \eqref{eq1.1},\eqref{eq1.2a},
then for any $\delta_1, \delta_2>0$ and $\veps < 1$ there hold
\begin{align}\label{eq2.3}
\E(\norm{\nab u}_{L^2(D)}^2) &\leq \Bigl( k^2 (1+\veps)^2
+ \delta_1 \Bigr) \E(\norm{u}_{L^2(D)}^2) \\
& \qquad + \left( \frac{\delta_1}{2k^2(1-\veps)^2} + \frac{1}{2\delta_1} \right) \left( \E(\norm{f}_{L^2(D)}^2) + \E(\norm{g}_{L^2(\partial D)}^2)  \right),  \notag \\
\E(\norm{ u}_{L^2(\p D)}^2) &\leq 
%\frac{\delta_2}{2k} \E(\norm{u}_{L^2(D)}^2) + \dfrac{1}{2\delta_2 k} \E(\norm{f}_{L^2(D)}^2). \label{eq2.4}
\frac{\delta_2}{k(1-\veps)} \E(\norm{u}_{L^2(D)}^2) + \frac{1}{\delta_2k(1-\veps)} \E(\norm{f}_{L^2(D)}^2)\label{eq2.4} \\
& \qquad + \frac{1}{k^2(1-\veps)^2} \E(\norm{g}_{L^2(\partial D)}^2) \notag.
\end{align}
\end{lemma}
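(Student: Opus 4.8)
The plan is to test the weak formulation \eqref{eq2.1} against $v=u$ itself and to extract the two estimates from the real and imaginary parts of the resulting scalar identity. Since the solution $u\in L^2(\Ome,H^1(D))$ is an admissible test function, choosing $v=u$ gives $\E(a(u,u))=\E\big((f,u)_D+\l g,u\r_{\p D}\big)$. Because $\alpha$ is real-valued, for each fixed $\ome$ the quantities $(\alpha^2u,u)_D=\int_D\alpha^2|u|^2$ and $\l\alpha u,u\r_{\p D}=\int_{\p D}\alpha|u|^2$ are real and nonnegative, so separating $a(u,u)$ into its real and imaginary parts and taking expectations yields
\begin{align*}
\E\big(\norm{\nab u}_{L^2(D)}^2\big)-k^2\,\E\Big(\int_D\alpha^2|u|^2\Big) &= \Re\,\E\big((f,u)_D+\l g,u\r_{\p D}\big),\\
k\,\E\Big(\int_{\p D}\alpha|u|^2\Big) &= \Im\,\E\big((f,u)_D+\l g,u\r_{\p D}\big).
\end{align*}
These two relations are the backbone of the argument; the radiation boundary condition is precisely what produces a dissipative boundary term with the correct sign in the imaginary identity.

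To obtain \eqref{eq2.4} I would start from the imaginary identity. Using $\alpha\ge 1-\veps$ on $\p D$ together with the Cauchy--Schwarz inequality on $D$ and on $\p D$ gives $k(1-\veps)\,\E(\norm{u}_{L^2(\p D)}^2)\le \E\big(\norm{f}_{L^2(D)}\norm{u}_{L^2(D)}\big)+\E\big(\norm{g}_{L^2(\p D)}\norm{u}_{L^2(\p D)}\big)$. I would then apply Young's inequality to the two products separately: to the $g$-$u$ term with weight $k(1-\veps)$, so that the resulting $\tfrac{k(1-\veps)}{2}\norm{u}_{L^2(\p D)}^2$ is absorbed into the left-hand side, and, after dividing by $\tfrac{k(1-\veps)}{2}$, to the $f$-$u$ term with the free weight $\delta_2$. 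Collecting terms reproduces exactly the three coefficients in \eqref{eq2.4}.

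For \eqref{eq2.3} I would return to the real identity and bound $k^2\int_D\alpha^2|u|^2\le k^2(1+\veps)^2\norm{u}_{L^2(D)}^2$ using $\alpha\le 1+\veps$. Applying Young's inequality with weight $\delta_1$ to the volume product $\norm{f}_{L^2(D)}\norm{u}_{L^2(D)}$ and, again with weight $\delta_1$, to the boundary product $\norm{g}_{L^2(\p D)}\norm{u}_{L^2(\p D)}$ leaves a leftover term $\tfrac{\delta_1}{2}\,\E(\norm{u}_{L^2(\p D)}^2)$. The crucial move is to control this boundary term by invoking the already-established estimate \eqref{eq2.4} with the \emph{specific} choice $\delta_2=k(1-\veps)$, which collapses the $\norm{u}_{L^2(D)}^2$ coefficient there to $1$ and turns both source coefficients into $\tfrac{1}{k^2(1-\veps)^2}$. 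Feeding this back merges the two $\tfrac{\delta_1}{2}\norm{u}_{L^2(D)}^2$ contributions into the advertised $\delta_1$ and assembles the source coefficient $\tfrac{\delta_1}{2k^2(1-\veps)^2}+\tfrac{1}{2\delta_1}$, delivering \eqref{eq2.3}.

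I expect the only genuinely delicate point to be this constant bookkeeping in the final step: \eqref{eq2.3} is not self-contained but must consume \eqref{eq2.4} as an input, and the parameters have to be matched precisely—in particular the choice $\delta_2=k(1-\veps)$—so that the auxiliary free parameter $\delta_2$ disappears and the stated wave-number-explicit coefficients emerge cleanly. Everything else reduces to Cauchy--Schwarz and Young's inequality applied pointwise in $\ome$ before integrating against $dP$, so no measure-theoretic subtlety beyond the admissibility of $v=u$ is involved.
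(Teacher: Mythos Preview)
Your proposal is correct and follows essentially the same route as the paper: test with $v=u$, split into real and imaginary parts, use Cauchy--Schwarz/Young to extract \eqref{eq2.4} from the imaginary part, and then feed \eqref{eq2.4} with the specific choice $\delta_2=k(1-\veps)$ back into the real-part estimate to obtain \eqref{eq2.3}. The constant bookkeeping you flag as the ``delicate point'' is handled exactly as you describe in the paper's proof.
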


\begin{proof}
Setting $v=u$ in \eqref{eq2.1} yields
\[
\int_\Omega a(u,u)\, dP =\int_\Omega \big( (f,u)_D + \langle g, v \rangle_{\partial D} \big)\, dP
\]
Taking the real and imaginary parts and using the definition of $a(\cdot,\cdot)$, we get
\begin{align}\label{eq2.5}
\int_\Omega \Bigl( \|\nabla u\|_{L^2(D)}^2
- k^2(1+\varepsilon\eta)^2 \|u\|_{L^2(D)}^2 \Bigr) \,dP
&=\Re\int_\Omega \big((f, u)_D + \langle g, v \rangle_{\partial D} \big)\, dP,\\
%
%k\int_\Omega \|u\|_{L^2(\p D)}^2\, dP&=\Im\int_\Omega \big((f, u)_D + \langle g, v \rangle_{\partial D} \big) \, dP.
k\int_\Omega \langle 1+\veps\eta, |u|^2\rangle_{\p D} \, dP&=\Im\int_\Omega \big( (f, u)_D + \langle g, v \rangle_{\partial D} \big) \, dP.
\label{eq2.6}
\end{align}
Applying the Cauchy-Schwarz inequality to \eqref{eq2.6} produces
\begin{align*}
	k(1-\veps) \E(\| u \|_{L^2(\partial D)}^2) & \leq \frac{\delta_2}{2} \E(\| u \|_{L^2(D)}^2) + \frac{1}{2 \delta_2} \E(\| f \|_{L^2(D)}^2) \\
	& \qquad + \frac{k(1-\veps)}{2} \E(\| u \|_{L^2(\partial D)}^2) + \frac{1}{2 k(1-\veps)} \E(\| g \|_{L^2(\partial D)}^2)
\end{align*}
Thus, \eqref{eq2.4} holds. Applying Cauchy-Schwarz to \eqref{eq2.5} yields
\begin{align*}
	\E(\| \nabla u \|_{L^2(D)}^2) & \leq \left(k^2(1+\veps)^2 + \frac{\delta_1}{2} \right) \E(\| u \|_{L^2(D)}^2) + \frac{1}{2 \delta_1} \E(\| f \|_{L^2(D)}^2) \\
	& \qquad + \frac{\delta_1}{2} \E(\| u \|_{L^2(\partial D)}^2) + \frac{1}{2 \delta_1} \E(\| g \|_{L^2(\partial D)}^2).
\end{align*}
To this one can apply \eqref{eq2.4} with $\delta_2 = k(1-\veps)$ and obtain \eqref{eq2.3}.
The proof is complete.
\end{proof}

\begin{lemma}\label{lem2.2}
Let $u\in L^2(\Omega,H^2(D))$, then there hold
\begin{align}\label{eq2.7}
\Re \int_\Omega \bigl(u, x\cdot\nabla u\bigr)_D \,dP
&= -\frac{d}2\int_\Omega \|u\|_{L^2(D)}^2 \,dP
+\frac12 \int_\Omega  \langle x\cdot \nu, |u|^2\rangle_{\p D} \,dP, \\
\Re \int_\Omega \bigl(\nab u, \nab(x\cdot\nabla u)\bigr)_D \,dP
&=\frac{2-d}2 \int_\Omega \|\nabla u\|_{L^2(D)}^2 \,dP \label{eq2.8} \\
&\hskip 0.8in
+\frac12 \int_\Omega \Langle x\cdot \nu, |\nabla u|^2 \Rangle_{\p D}\, dP. \nonumber
\end{align}
\end{lemma}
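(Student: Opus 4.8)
The plan is to establish both identities pointwise in $\ome$ — as deterministic Rellich identities holding for almost every sample $\ome\in\Ome$ — and then integrate over $\Ome$ against $P$. Since $u\in L^2(\Ome,H^2(D))$, for a.e.\ $\ome$ the section $u(\ome,\cdot)$ lies in $H^2(D)$ and all the spatial integrals below are finite, while the resulting $\ome$-dependent quantities are $P$-integrable; hence interchanging the expectation with the spatial manipulations is justified by Fubini's theorem. I would therefore fix $\ome$, suppress it, and work with $u=u(\ome,\cdot)\in H^2(D)$.

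For \eqref{eq2.7} the computational cornerstone is the pointwise real-part identity $\Re\bigl(\bar u\,\p_j u\bigr)=\tfrac12\p_j\abs u^2$ (valid, up to conjugation, for either inner-product convention since we extract real parts). Writing $x\cdot\nab u=\sum_j x_j\p_j u$, summation gives
\begin{equation*}
\Re\bigl(u,\,x\cdot\nab u\bigr)_D=\frac12\int_D x\cdot\nab\abs u^2\,dx.
\end{equation*}
I would then rewrite $x\cdot\nab\phi=\nab\cdot(x\phi)-d\,\phi$ (using $\nab\cdot x=d$) with $\phi=\abs u^2$ and apply the divergence theorem on the Lipschitz polygon $D$; the volume part produces $-\tfrac d2\norm u_{L^2(D)}^2$ and the surface part produces $\tfrac12\langle x\cdot\nu,\abs u^2\rangle_{\p D}$. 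Taking expectation yields \eqref{eq2.7}.

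For \eqref{eq2.8} I would first differentiate the radial derivative: the product rule gives $\p_i(x\cdot\nab u)=\p_i u+x\cdot\nab(\p_i u)$, whence
\begin{equation*}
\Re\bigl(\nab u,\,\nab(x\cdot\nab u)\bigr)_D=\norm{\nab u}_{L^2(D)}^2+\sum_i\Re\bigl(\p_i u,\,x\cdot\nab(\p_i u)\bigr)_D.
\end{equation*}
The first term is the extra contribution from the identity part of the product rule, and each summand in the second term is precisely of the form treated in \eqref{eq2.7}, now with $u$ replaced by $\p_i u\in H^1(D)$ — this is exactly where the hypothesis $u\in H^2(D)$ is needed. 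Applying the (deterministic) identity \eqref{eq2.7} componentwise gives $-\tfrac d2\norm{\p_i u}_{L^2(D)}^2+\tfrac12\langle x\cdot\nu,\abs{\p_i u}^2\rangle_{\p D}$ for each $i$; summing collapses these into $-\tfrac d2\norm{\nab u}_{L^2(D)}^2$ and $\tfrac12\langle x\cdot\nu,\abs{\nab u}^2\rangle_{\p D}$, and combining with the first term produces $\tfrac{2-d}2\norm{\nab u}_{L^2(D)}^2+\tfrac12\langle x\cdot\nu,\abs{\nab u}^2\rangle_{\p D}$. Taking expectation gives \eqref{eq2.8}.

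The main obstacle is not conceptual but one of rigor on a nonsmooth domain: the divergence theorem and the boundary integrals must be justified on the polygonal boundary $\p D$, which requires the traces of $u$ and of $\nab u$ to lie in $L^2(\p D)$. Since $D$ is a bounded Lipschitz (polygonal) domain and $u\in H^2(D)$, one has $\nab u\in H^1(D)^d$, so $\nab u$ admits an $L^2(\p D)$ trace and the outward normal $\nu$ is defined away from the finitely many corners, which form a set of zero surface measure. All manipulations are then made rigorous by approximating $u$ in the $H^2(D)$ norm by smooth functions, for which the identities are elementary, and passing to the limit using the continuity of the trace and gradient-trace maps.
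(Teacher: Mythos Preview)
Your proof is correct. For \eqref{eq2.7} your argument coincides with the paper's: both apply the divergence theorem to $\Div(x\abs u^2)$ using $\Div x=d$. For \eqref{eq2.8}, however, you take a genuinely different route. You observe that $\p_i(x\cdot\nab u)=\p_i u+x\cdot\nab(\p_i u)$ and then reuse the already-proved identity \eqref{eq2.7} componentwise with $u$ replaced by $\p_i u\in H^1(D)$; summing over $i$ and collecting terms gives the result directly. The paper instead invokes the classical Rellich-type differential identities
\[
\nabla\!\cdot\!(x\abs{\nab u}^2)=d\abs{\nab u}^2+x\cdot\nab(\abs{\nab u}^2),\qquad
x\cdot\nab(\abs{\nab u}^2)=2\Re\bigl(\nab u\cdot\overline{\nab(x\cdot\nab u)}\bigr)-2\abs{\nab u}^2,
\]
adds them, integrates, and applies the divergence theorem once. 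Your reduction of \eqref{eq2.8} to \eqref{eq2.7} is arguably more transparent and avoids writing down the second identity; the paper's version has the advantage of matching the standard Rellich machinery used in the Helmholtz literature and of not requiring one to track the componentwise trace argument. Your discussion of traces on the polygonal boundary and the density argument for $H^2(D)$ functions is appropriate and in fact more explicit than the paper's.
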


\begin{proof}
\eqref{eq2.7} follows immediately from applying the divergence theorem to
\[
\int_\Ome \bigl(\Div(x|u|^2), 1\bigr)_D\, dP,
\]
and the fact that $\Div (x)=d$. To show \eqref{eq2.8}, we first recall the
following differential identities \cite{Cummings_Feng06}:
\begin{align*}
\nabla\cdot(x|\nabla u|^2) &= d |\nabla u|^2 + x\cdot\nabla(|\nabla u|^2),\\
x\cdot\nabla(|\nabla u|^2) &= 2 \Re \Bigl( \nabla\cdot(\nabla u
(\overline{x\cdot\nabla u})) - \Delta u (\overline{x\cdot\nabla u}) \Bigr)
- 2 |\nabla u|^2\\
&=2\Re  \bigl(\,\nabla u \cdot \overline{\nab(x\cdot\nabla u)}\, \bigr)
- 2 |\nabla u|^2.
\end{align*}
Then \eqref{eq2.8} follows from adding the above two identities, integrating
the sum over $D\times \Omega$ and applying the divergence theorem on the left-hand
side of the resulting equation.
\end{proof}

\begin{remark}
\eqref{eq2.8} could be called a stochastic Rellich identity for the Laplacian.
\end{remark}

We are now ready to state and prove our wave-number-explicit
estimate for solutions of problem \eqref{eq1.1},\eqref{eq1.2a} defined
in Definition \ref{def2.1}.

\begin{theorem}\label{thm2.1}
Let $u\in L^2(\Omega,H^1(D))$ be a solution of \eqref{eq1.1},\eqref{eq1.2a} and
$R$ be the smallest number such that $B_R(0)$ contains the domain $D$. Then
there hold the following estimates:
\begin{align}\label{eq2.9}
\E\Bigl(\norm{u}_{L^2(D)}^2 +\norm{u}_{L^2(\p D)}^2 +c_0\norm{\nab u}_{L^2(\p D)}^2\Bigr)
&\leq C_0 \Bigl( \frac{1}{k} +\frac{1}{k^2} \Bigr)^2 M(f,g), \\
%\left( \E(\norm{f}_{L^2(D)}^2) + \E(\norm{g}_{L^2(\partial D)}^2)  \right), %\nonumber\\
%
\E(\norm{u}_{H^1(D)}^2) &\leq C_0\Bigl(1+\dfrac{1}{k^2}\Bigr)^2 M(f,g), \label{eq2.9a}
\end{align}
provided that $\veps (2+\veps) < \gamma_0 :=\min\bigl\{1,\frac{13-2d}{2(4d-7)+25kR}\bigr\}$. 
Where $C_0$ is some positive constant independent of $k$ and $u$, and
\begin{equation}\label{eq2.9x}
M(f, g):= \E\Bigl( \norm{f}_{L^2(D)}^2 + \norm{g}_{L^2(\p D)}^2 \Bigr).
\end{equation}
Moreover, if $g\in L^2(\Ome, H^{\frac12}(D))$ and $u\in L^2(\Omega,H^2(D))$, there also holds

\begin{align}\label{eq2.9b}
\E(\norm{u}_{H^2(D)}^2) \leq C\Bigl(k+\dfrac{1}{k^2}\Bigr)^2 
\E\Bigl( \norm{f}_{L^2(D)}^2 + \norm{g}_{H^{\frac12}(\p D)}^2 \Bigr).
\end{align}

\end{theorem}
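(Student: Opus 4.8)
The plan is to prove the $L^2$-type bound \eqref{eq2.9} by the Rellich–Morawetz multiplier technique and then bootstrap to \eqref{eq2.9a} and \eqref{eq2.9b}. First I would test \eqref{eq1.1} with the geometric multiplier $x\cdot\nab u$: multiply by $\overline{x\cdot\nab u}$, integrate over $D$ and $\Ome$, and take twice the real part. Since Lemma \ref{lem2.2} needs $H^2$ regularity, I would first justify its use by invoking Remark \ref{rem2.1} together with the elliptic regularity available on the convex polygon $D$ (or argue by density from smooth data). Integration by parts turns $-2\Re\int_\Ome(\Delta u, x\cdot\nab u)_D\,dP$ into the volume term $2\Re\int_\Ome(\nab u,\nab(x\cdot\nab u))_D\,dP$, which is rewritten via the stochastic Rellich identity \eqref{eq2.8}, minus a boundary term $2\Re\int_\Ome\l\p_\nu u, x\cdot\nab u\r_{\p D}\,dP$. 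For the zeroth-order term I would split $\alpha^2=1+\beta$ with $\beta:=\veps\eta(2+\veps\eta)$, so that $|\beta|\le\veps(2+\veps)$ a.s.; the ``$1$'' part is handled exactly by \eqref{eq2.7}, producing the crucial coefficient $d\,k^2\E(\norm{u}_{L^2(D)}^2)$, while the ``$\beta$'' part is a perturbation bounded by $2k^2\veps(2+\veps)R\,\E(\norm{u}_{L^2(D)}\norm{\nab u}_{L^2(D)})$ using $|x|\le R$ on $D$.

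Next I would treat the boundary contributions. On $\p D$ I would decompose $\nab u=(\p_\nu u)\nu+\nab_T u$ into normal and tangential parts and use $\p_\nu u=g-\i k\alpha u$ from \eqref{eq1.2a} to eliminate the normal derivative; the star-shapedness $x\cdot\nu\ge c_0$ then provides a favorable sign on the terms $\l x\cdot\nu,|\nab u|^2\r_{\p D}$ and lets me retain $c_0\E(\norm{\nab u}_{L^2(\p D)}^2)$ on the left. Collecting everything, the leading volume terms combine as $d\,k^2\E(\norm{u}_{L^2(D)}^2)+(2-d)\E(\norm{\nab u}_{L^2(D)}^2)$; inserting the gradient bound \eqref{eq2.3}, which controls $\E(\norm{\nab u}_{L^2(D)}^2)$ by $k^2\E(\norm{u}_{L^2(D)}^2)$ up to data, collapses these into a term of order $2k^2\E(\norm{u}_{L^2(D)}^2)$, which is the quantity to be bounded from above by the data.

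I would then close the estimate by choosing the free parameters $\delta_1,\delta_2$ in Lemma \ref{lem2.1} and the Young's-inequality weights so that every gradient, boundary, and $\beta$-perturbation term is absorbed into the left-hand side; this is exactly where the smallness condition $\veps(2+\veps)<\gamma_0$ with $\gamma_0=\min\{1,\frac{13-2d}{2(4d-7)+25kR}\}$ is used, since it guarantees the perturbation coefficients are small relative to the good terms. The result is \eqref{eq2.9}. Estimate \eqref{eq2.9a} then follows by feeding \eqref{eq2.9} back into \eqref{eq2.3} with an $O(k^2)$ choice of $\delta_1$. For the $H^2$ bound \eqref{eq2.9b} I would invoke elliptic regularity on the convex polygon $D$: since $\Delta u=-k^2\alpha^2u-f$, one has $\E(\norm{\Delta u}_{L^2(D)}^2)\lesssim k^4\E(\norm{u}_{L^2(D)}^2)+\E(\norm{f}_{L^2(D)}^2)$, and the regularity estimate $\norm{u}_{H^2(D)}\lesssim\norm{\Delta u}_{L^2(D)}+\norm{g}_{H^{1/2}(\p D)}+\text{(lower order)}$ combined with \eqref{eq2.9}--\eqref{eq2.9a} yields the stated $(k+k^{-2})^2$ dependence.

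The hardest part is the boundary analysis in the second step: for a merely $H^1$ solution the tangential cross term $2\Re\l\p_\nu u, x\cdot\nab_T u\r_{\p D}$ is not directly meaningful and requires either a surface integration by parts (with attention to the polygon's corner contributions, which are benign for convex $D$) or an approximation argument. The remaining delicacy is the bookkeeping of all constants and free parameters needed to absorb the perturbation and boundary terms while extracting the precise threshold $\gamma_0$ and the sharp $k$-dependence.
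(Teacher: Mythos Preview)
Your proposal follows essentially the same Rellich--Morawetz multiplier strategy as the paper: test with $v=x\cdot\nabla u$, apply the identities of Lemma~\ref{lem2.2}, split $\alpha^2=1+\veps\eta(2+\veps\eta)$, use the star-shape condition, and close with Lemma~\ref{lem2.1} and suitable Young-inequality parameters; the $H^1$ and $H^2$ steps are also the same.

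The one place where you diverge is the boundary treatment. You propose to decompose $\nabla u=(\partial_\nu u)\nu+\nabla_T u$ on $\partial D$ and substitute $\partial_\nu u=g-\i k\alpha u$, which then forces you to worry about the cross term $2\Re\langle\partial_\nu u, x\cdot\nabla_T u\rangle_{\partial D}$ and the corner contributions of a surface integration by parts. The paper avoids all of this: it simply bounds the boundary cross term $k\,\Im\langle\alpha u, x\cdot\nabla u\rangle_{\partial D}$ and the data term $\Re\langle g, x\cdot\nabla u\rangle_{\partial D}$ by Young's inequality using $|x|\le R$, producing pieces proportional to $\|u\|_{L^2(\partial D)}^2$ and $\|\nabla u\|_{L^2(\partial D)}^2$; the latter are then absorbed into the favorable term $-\tfrac12\langle x\cdot\nu,|\nabla u|^2\rangle_{\partial D}\le -\tfrac{c_0}{2}\|\nabla u\|_{L^2(\partial D)}^2$. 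No tangential/normal splitting is needed, and the corner issue you flag as ``the hardest part'' never arises. Your route would work (on a convex polygon the corner terms are indeed benign), but it is strictly more laborious than the paper's direct absorption.
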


\begin{proof}
To avoid some technicalities, below we only give a proof for the case
$u\in L^2(\Omega,H^2(D))$. For the general case, $u$ needs be replaced
by its mollification $u_\rho$ at the beginning of the proof and followed by
taking the limit $\rho\to 0$ after the integration by parts is done.

Setting $v=x\cdot\nabla u$ in \eqref{eq2.1} yields
\begin{equation}\label{eq2.10}
\int_\Omega \Bigl( (\nab u, \nab v)_D - k^2 (\alpha^2 u, v)_D
+ \i k \langle \alpha u, v\rangle_{\p D} \Bigr)\, dP
= \int_\Omega \left( (f, v)_D  + \langle g,v \rangle_{\partial D}\right)\,dP.
\end{equation}
Using \eqref{eq2.7} and \eqref{eq2.8} after taking the real part
of \eqref{eq2.10} and regrouping we get
\begin{align*}
&\dfrac{dk^2}{2}\int_\Omega\|u\|_{L^2(D)}^2\,dP
= \int_\Omega \Bigl( \frac{d-2}{2} \|\nabla u\|_{L^2(D)}^2
 + k^2\veps\, \Re\bigl(\eta (2+\veps\eta), v\bigr)_D \Bigr)\, dP\\
&\hskip .6in
-\int_\Omega \Bigl( k\Im \langle (1+\veps\eta) u, v\rangle_{\p D}
+\frac12 \Langle x\cdot\nu,|\nabla u|^2 \Rangle_{\p D}
-\frac{k^2}{2} \Langle x\cdot\nu,|u|^2 \Rangle_{\p D}\Bigr)\, dP \\
&\hskip .6in + \int_{\Omega} \left(\Re (f, v)_D + \Re \langle g,v \rangle_{\partial D} \right) \, dP
\end{align*}
It then follows from Schwarz inequality and the ``star-shape" condition and the
facts that $|x|\leq R$ for $x\in D$ and $\|\eta\|_{L^\infty(D)}\leq 1$ a.s. that
\begin{align*}
\frac{dk^2}{2} \E(\norm{u}_{L^2(D)}^2)
&\leq \frac{d-2}{2} \E(\norm{\nabla u}_{L^2(D)}^2)
+ k^2 \veps R(2+\veps) \Bigl( \frac{1}{2\delta_1} \E(\norm{u}_{L^2(D)}^2) \\
&\qquad
+ \frac{\delta_1}{2} \E(\norm{\nabla u}_{L^2(D)}^2) \Bigr)
+\frac{R}{2\delta_2}\E(\norm{f}_{L^2(D)}^2)
+\frac{R\delta_2}{2} \E(\norm{\nabla u}_{L^2(D)}^2) \\
& \qquad +\frac{R}{2\delta_3}\E(\norm{g}_{L^2(\partial D)}^2)
+\frac{R\delta_3}{2} \E(\norm{\nabla u}_{L^2(\partial D)}^2) \\
&\qquad
+ \frac{kR}{\delta_4} \E(\norm{u}_{L^2(\p D)}^2)
+kR\delta_4 \E(\norm{\nabla u}_{L^2(\p D)}^2)  \\
&\qquad
-\frac{c_0}{2} \E\bigl(\norm{\nabla u}_{L^2(\p D)}^2\bigr) +
\frac{k^2 R}{2} \E\bigl(\norm{u}_{L^2(\p D)}^2\bigr).
\end{align*}
At this point, we note that $\veps(2+\veps) \leq 1$ implies $\veps \leq \frac{1}{2}$.
Setting $\delta_3 = \frac{c_0}{4R}$, $\delta_4=\frac{c_0}{8kR}$ and denoting $\gamma=\veps(2+\veps)$,
using Lemma \ref{lem2.1} we can bound right-hand side as follows:
\begin{align*}
&\frac{dk^2}{2} \E(\norm{u}_{L^2(D)}^2)  \leq \Bigl(\frac{d-2}{2}
+\frac{k^2 R\gamma \delta_1}{2} + \frac{R\delta_2}{2} \Bigr)\E(\norm{\nabla u}_{L^2(D)}^2)
+\frac{k^2 R\gamma}{2\delta_1} \E(\norm{u}_{L^2(D)}^2) \\
&\qquad\qquad
+\Bigl( \frac{8k^2R^2}{c_0}+ \frac{k^2R}{2} \Bigr) \E(\norm{u}_{L^2(\p D)}^2)
-\frac{c_0}{4} \E\bigl(\norm{\nabla u}_{L^2(\p D)}^2\bigr) \\
& \qquad \qquad 
+\frac{R}{2\delta_2} \E(\norm{f}_{L^2(D)}^2)
+ \frac{2R^2}{c_0} \E(\norm{g}_{L^2(\partial D)}^2) \\
&\qquad
\leq \Bigl(\frac{d-2}{2} +\frac{k^2 R\gamma \delta_1}{2} + \frac{R\delta_2}{2} \Bigr)
 \left( k^2 (1+\gamma) + \delta_5 \right) \E(\norm{u}_{L^2(D)}^2)
 \\
& \qquad \qquad
+ \Bigl(\frac{d-2}{2} +\frac{k^2 R\gamma \delta_1}{2} + \frac{R\delta_2}{2} \Bigr) \Bigl( \frac{2 \delta_5}{k^2} + \frac{1}{2 \delta_5} \Bigr) \left( \E(\norm{f}_{L^2(D)}^2)+  \E(\norm{g}_{L^2(\partial D)}^2) \right)
\\
&\qquad\qquad
+\Bigl( \frac{8k^2R^2}{c_0}+ \frac{k^2R}{2}  \Bigr)
\Bigl(2\delta_6 \E(\norm{ u}_{L^2(D)}^2)
+\frac{2}{k^2\delta_6} \E(\norm{f}_{L^2(D)}^2) + \frac{4}{k^2}\E(\norm{g}_{L^2(\partial D)}^2)\Bigr) \\
&\qquad\qquad
+\frac{k^2 R\gamma}{2\delta_1} \E(\norm{u}_{L^2(D)}^2)
+\frac{R}{2\delta_2} \E(\norm{f}_{L^2(D)}^2)
+\frac{2R^2}{c_0} \E(\norm{g}_{L^2(\partial D)}^2)
-\frac{c_0}{4} \E\bigl(\norm{\nabla u}_{L^2(\p D)}^2 \bigr),
\end{align*}
which is equivalent to
\begin{equation}\label{eq2.12}
c_1 \E(\norm{u}_{L^2(D)}^2) + \frac{c_0}{4} \E(\norm{\nabla u}_{L^2(\p D)}^2)
\leq c_2 \E(\norm{f}_{L^2(D)}^2),
\end{equation}
where
\begin{align*}
c_1 &:=  k^2 - \frac{d-2}2 \Bigl(k^2\gamma + \delta_5 \Bigr)
- \Bigl(\frac{k^2 R\gamma \delta_1}{2} + \frac{R\delta_2}{2} \Bigr)
\Bigl( k^2 (1+\gamma) + \delta_5 \Bigr) \\
&\qquad\quad
-\Bigl(\frac{16k^2R^2}{c_0} + k^2R \Bigr) \delta_6
- \frac{k^2 R\gamma}{2\delta_1}, \\
c_2 &:= \Bigl(\frac{d-2}2 + \frac{k^2 R\gamma \delta_1}{2} + \frac{R\delta_2}{2} \Bigr)
\Bigl( \frac{2\delta_5}{k^2} + \frac{1}{2\delta_5} \Bigr) \\
& \qquad \qquad 
+ \Bigl(\frac{32k^2R^2}{c_0} + 2k^2R \Bigr) \Bigl( \frac{2}{k^2\delta_6} + \frac{4}{k^2} \Bigr)
+ \frac{R}{2\delta_2} + \frac{2R^2}{c_0}.
\end{align*}

Let $\delta_1=\frac{1}{2k}$, $\delta_2=\frac{1}{4R}$, $\delta_5=\frac{k^2}{4}$,
and $\delta_6=\frac{1}{4\bigl( \frac{16R^2}{c_0}+ R\bigr)}$, then
\begin{align*}
c_1 &= k^2 \Bigl[ \frac{27-4d}{32} - \Bigl( \frac{4d-7}{8}
+\frac{(21+4\gamma)Rk}{16} \Bigr) \gamma \Bigr],\\
%c_2 &= \frac{4d-7+2Rk\gamma}{8k^2}
%+ R^2 \Bigl[ 2+ 4\Bigl( \frac{4R}{c_0} +\frac12 \Bigr)^2 \Bigr].
c_2 &= \Bigl( \frac{d-2}{2} + \frac{kR\gamma}{4} + \frac{1}{8} \Bigr) \Bigl( \frac{1}{2} + \frac{1}{8k^2} \Bigr) + 2R^2(1+c_0) \\
& \qquad
	+ 8 \Bigl( \frac{16R^2}{c_0} + R \Bigr) \Bigl( \frac{16R^2}{c_0} + R + 1 \Bigr)
\end{align*}
If $\gamma < \gamma_0$, it is easy to check that $c_1\geq \frac{k^2}{32}$.  Thus,
\eqref{eq2.12} infers that
\begin{equation}\label{eq2.13}
\E(\norm{u}_{0,D}^2)  + c_0 \E(\|\nab u\|_{L^2(\p D)}^2)
\leq \frac{C}{k^2}\Bigl(1+\frac{1}{k^2}\Bigr) \left( \E(\norm{f}_{L^2(D)}^2) + \E(\norm{g}_{L^2(\partial D)}^2) \right)
\end{equation}
for some constant $C>0$ independent of $k$ and $u$. This then proves \eqref{eq2.9}.

By \eqref{eq2.3} with $\delta_1= k^2$ and \eqref{eq2.13} we get
\begin{align*}
\E(\norm{u}_{H^1(D)}^2) &= \E(\norm{u}_{L^2(D)}^2) + \E(\norm{\nabla u}_{L^2(D)}^2) \\
&\leq \frac{C}{k^2}\Bigl(1+\frac{1}{k^2}\Bigr) \Bigl( \E(\norm{f}_{L^2(D)}^2) + \E(\norm{g}_{L^2(\partial D)}^2) \Bigr) \\
&\qquad \qquad
+ \left(k^2(1+\veps)^4 + k^2 \right)\E(\norm{u}_{L^2(D)}^2) \\
&\qquad \qquad 
+ \Bigl(1 + \frac{1}{2k^2} \Bigr) \Bigl( \E(\norm{f}_{L^2(D)}^2) + \E(\norm{g}_{L^2(\partial D)}^2) \Bigr) \\
&\leq C\Bigl(1+\frac{1}{k^2}\Bigr)^2 \Bigl( \E(\norm{f}_{L^2(D)}^2) + \E(\norm{g}_{L^2(\partial D)}^2) \Bigr) .
\end{align*}
Hence, \eqref{eq2.9a} holds.

Finally, it follows from the standard elliptic regularity theory for Poisson
equation and the trace inequality (cf. \cite{Gilbarg_Trudinger01})  that
\begin{align*}
\E(\norm{u}_{H^2(D)}^2) &\leq C\Bigl( \E(\norm{k^2u}_{L^2(D)}^2)
+\E(\norm{f}_{L^2(D)}^2) +\E(\norm{g}_{H^{\frac12}(\p D)}^2) \\
&\qquad \qquad + \E(\norm{ku}_{H^{\frac12}(\p D)}^2
+ \E(\norm{u}_{L^2(D)}^2) \Bigr)\\
&\leq  C \E\Bigl( k^4 \norm{u}_{L^2(D)}^2 + \norm{f}_{L^2(D)}^2
+\norm{g}_{H^{\frac12}(\p D)}^2 \Bigr) \\
&\qquad \qquad + C \E\Bigl( k^2 \norm{\nab u}_{L^2(D)}^2 + \norm{u}_{L^2(D)}^2 \Bigr) \\
&\leq C\Bigl(k+\frac{1}{k^2}\Bigr)^2 \E\Bigl( \norm{f}_{L^2(D)}^2 
+ \norm{g}_{H^{\frac12}(\partial D)}^2 \Bigr).
\end{align*}
Hence \eqref{eq2.9b} holds. The proof is complete.
\end{proof}

\begin{remark}
By the definition of $\gamma_0$, we see that $\gamma_0 = O\bigl(\frac{1}{kR}\bigr)$.
In practice, this is not a restrictive condition because $R$ is often taken to be proportional
to the wave length. Hence, $kR=O(1)$.
\end{remark}

As a non-trivial byproduct, the above stability estimates can be
used conveniently to establish the existence and uniqueness of
solutions to problem \eqref{eq2.1}--\eqref{eq2.2} as defined
in Definition \ref{def2.1}.

\begin{theorem}\label{thm2.2}
Let $f\in L^2(\Ome,L^2(D))$ and $g \in L^2(\Ome, L^2(\partial D)$. For each fixed pair
of positive number $k$ and $\veps$ such that $\veps(2 + \veps) < \gamma_0$, there exists a unique solution
$u\in L^2(\Ome, H^1_+(D)\cap V)$ to problem \eqref{eq2.1}--\eqref{eq2.2}.
\end{theorem}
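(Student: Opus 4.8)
The plan is to establish uniqueness first, as an immediate consequence of the a priori estimate already proved, and then to obtain existence through a \emph{pathwise} Fredholm argument followed by a measurability-and-integrability lifting to the Bochner space. Uniqueness is essentially free: if $u_1, u_2 \in L^2(\Ome, H^1(D))$ both solve \eqref{eq2.1} with the same data, their difference $w = u_1 - u_2$ solves the homogeneous problem ($f = 0$, $g = 0$), so estimate \eqref{eq2.9a} of Theorem \ref{thm2.1} gives $\E(\norm{w}_{H^1(D)}^2) \le C(1+k^{-2})^2 M(0,0) = 0$, whence $w = 0$ a.s. Here the hypothesis $\veps(2+\veps) < \gamma_0$ is precisely what licenses the application of Theorem \ref{thm2.1}.

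For existence I would resist the temptation to invoke the Fredholm alternative directly in $\mathcal{H} := L^2(\Ome, H^1(D))$. Although the form $\int_\Ome a(\cdot,\cdot)\,dP$ splits as a coercive part plus the $L^2$-type term $\int_\Ome (k^2\alpha^2+1)(\cdot,\cdot)_D\,dP$, the operator associated with the latter is \emph{not} compact on $\mathcal{H}$, because the embedding $L^2(\Ome, H^1(D)) \hookrightarrow L^2(\Ome, L^2(D))$ fails to be compact: a fixed $0 \ne \phi \in H^1(D)$ tensored against an orthonormal sequence in $L^2(\Ome)$ is bounded in $\mathcal{H}$ yet has no $L^2(\Ome,L^2(D))$-convergent subsequence, so the probability space contributes no compactness. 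This is the essential difficulty, and it forces a pathwise treatment.

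Accordingly, for each fixed $\ome$ with $\norm{\eta(\ome,\cdot)}_{L^{\infty}(D)} \le 1$, I would solve the frozen-coefficient deterministic impedance problem $a_\ome(u(\ome), v) = (f(\ome), v)_D + \langle g(\ome), v\rangle_{\p D}$ for all $v \in H^1(D)$, where $a_\ome$ denotes the form \eqref{eq2.2} with $\ome$ held fixed. On $H^1(D)$ the real part of $a_\ome(v,v) + (k^2(1+\veps)^2 + 1)\norm{v}_{L^2(D)}^2$ dominates $\norm{v}_{H^1(D)}^2$ (the impedance term is purely imaginary since $\alpha$ is real), so $a_\ome$ is a compact perturbation of a coercive form, and now Rellich--Kondrachov supplies exactly the compactness that was unavailable in $\mathcal{H}$. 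The Fredholm alternative then reduces existence to pathwise uniqueness, which follows from the deterministic counterpart of Theorem \ref{thm2.1}; that counterpart is obtained by rerunning the Rellich-identity argument of Lemma \ref{lem2.2} and Theorem \ref{thm2.1} for fixed $\ome$ and dropping the expectation, since every step there is a pointwise-in-$\ome$ identity integrated over $\Ome$.

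Finally I would lift the pathwise solutions back to $\mathcal{H}$. Measurability of $\ome \mapsto u(\ome)$ follows because it is the composition of the measurable data map $\ome \mapsto (\alpha(\ome,\cdot), f(\ome,\cdot), g(\ome,\cdot))$ with the deterministic solution operator, the latter being continuous in its coefficient and data arguments on the admissible set $\{\norm{\eta}_{L^{\infty}(D)} \le 1\}$. Square-integrability and the claimed regularity then come straight from Theorem \ref{thm2.1}: estimate \eqref{eq2.9a} gives $\E(\norm{u}_{H^1(D)}^2) \le C(1+k^{-2})^2 M(f,g) < \infty$; estimate \eqref{eq2.9} controls $\norm{\nab u}_{L^2(\p D)}$ so that $u \in H^1_+(D)$ a.s.; and the equation itself yields $\Delta u = -k^2\alpha^2 u - f \in L^2(D)$ a.s., so $u \in V$ a.s., exactly as anticipated in Remark \ref{rem2.1}. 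Hence $u \in L^2(\Ome, H^1_+(D) \cap V)$, completing the proof. I expect the continuity/measurability of the solution map to be the main technical obstacle, precisely because the non-compactness noted above rules out the cleaner direct Fredholm route in $\mathcal{H}$ and compels the pathwise detour.
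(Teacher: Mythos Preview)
Your proof is correct, and in fact more careful than the paper's own argument. The paper applies the Fredholm Alternative directly in $L^2(\Ome, H^1(D))$: it asserts a G\aa rding inequality for $\int_\Ome a(\cdot,\cdot)\,dP$, checks uniqueness for the adjoint via the same stability estimates, and concludes. You explicitly refuse this route on the grounds that the embedding $L^2(\Ome, H^1(D)) \hookrightarrow L^2(\Ome, L^2(D))$ is not compact, so the lower-order term in the G\aa rding splitting does not produce a compact operator and the standard Fredholm machinery does not engage; your tensor-product counterexample is valid, and this is a genuine soft spot in the paper's proof as written. Your pathwise detour---freeze $\ome$, apply Fredholm in $H^1(D)$ where Rellich--Kondrachov supplies the missing compactness, then lift via measurability of the solution map and the a priori bound \eqref{eq2.9a}---is the standard rigorous repair. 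What you gain is a complete argument; what you pay is the measurability step, which the paper's direct approach would have avoided had it gone through. Your closing regularity check ($u\in H^1_+(D)\cap V$ a.s.\ from \eqref{eq2.9} and the equation) is also more explicit than the paper, which defers this to Remark~\ref{rem2.1}.
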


\begin{proof}
The proof is based on the well known Fredholm Alternative Principle 
(cf. \cite{Gilbarg_Trudinger01}). First, it is easy to check that the sesquilinear 
form on the right-hand side of \eqref{eq2.1} satisfies a G\"arding's inequality on the
space $L^2(\Ome, H^1(D))$. Second, to apply the Fredholm Alternative Principle
we need to prove that solutions to the adjoint problem of \eqref{eq2.1}--\eqref{eq2.2}
is unique.  It is easy to verify that the adjoint problem
is associated with the sesquilinear form
\[
\widehat{a}(w,v) := \bigl(\nab w, \nab v\bigr)_D -k^2 \bigl( \alpha^2 w, v \bigr)_D
-\i k\Langle \alpha w, v\Rangle_{\p D},
\]
which differs from $a(\cdot,\cdot)$ only in the sign of the last term.
As a result, all the stability estimates for problem \eqref{eq2.1}--\eqref{eq2.2}
still hold for its adjoint problem. Since the adjoint problem is a linear
problem (so is problem \eqref{eq2.1}--\eqref{eq2.2}), the stability estimates
immediately infers the uniqueness. Finally, the Fredholm Alternative Principle
then implies that problem \eqref{eq2.1}--\eqref{eq2.2} has a
unique solution $u\in L^2(\Ome, H^1(D))$.  The proof is complete.
\end{proof}

\begin{remark}
The uniqueness of the adjoint problem can also be proved using the
classical unique continuation argument (cf. \cite{Leis86}).
\end{remark}

%%%%
\section{Multi-modes representation of the solution and its finite modes approximations}\label{sec-3}

The first goal of this section is to develop a multi-modes representation
for the solution to problem \eqref{eq1.1}--\eqref{eq1.2} in terms of powers
of the parameter $\veps$. We first postulate such a representation and
then prove its validity by establishing some energy estimates
for all the mode functions. The second goal of this section is to
establish an error estimate for finite modes approximations
of the solution. Both the multi-modes representation and its finite
modes approximations play a pivotal role in our overall solution
procedure for solving problem \eqref{eq1.1}--\eqref{eq1.2} as
they provide the theoretical foundation for the solution procedure.
Throughout this section, we use $u^{\veps}$ to denote the solution
to problem \eqref{eq1.1}--\eqref{eq1.2} which is proved in Theorem \ref{thm2.2}. 
%which is amount to taking $g \equiv 0$ in \eqref{eq1.2a}.

We start by postulating that the solution $u^\veps$ has the following
multi-modes expansion:
\begin{equation}\label{eq3.1}
u^{\veps} = \sum_{n=0}^\infty \veps^n u_n,
\end{equation}
whose validity will be justified later. Without loss of the generality,
we assume that $k\ge1$ and $D\subset B_1(0)$.  Otherwise, the problem can
be rescaled to this regime by a suitable change of variable. We note that
the normalization $D\subset B_1(0)$ implies that $R=1$.

Substituting the above expansion into the Helmholtz equation
\eqref{eq1.1} and matching the coefficients of $\veps^n$ order terms
for $n=0,1,2,\cdots$, we obtain
\begin{align}\label{eq3.2}
u_{-1} &:\equiv 0,\\
-\Delta u_0- k^2 u_0 &= f, \label{eq3.3} \\
-\Delta u_{n}- k^2 u_{n} &= 2k^2\eta u_{n-1} +k^2\eta^2u_{n-2}
\qquad \mbox{for } n\geq 1. \label{eq3.4}
\end{align}
Similarly, the boundary condition \eqref{eq1.2} translates to each
mode function $u_n$ as follows:
\begin{equation}\label{eq3.5}
\p_\nu u_n + \i ku_n = - \i k \eta u_{n-1} \qquad\mbox{for } n\geq 0.
\end{equation}

A remarkable feature of the above multi-modes expansion is that
all the mode functions satisfy the same type (nearly deterministic)
Helmholtz equation and the same boundary condition. The only
difference is that the Helmholtz equations have different
right-hand side source terms (all of them except one are random
variables), and each pair of consecutive mode functions
supply the source term for the Helmholtz equation satisfied by
the next mode function. This remarkable feature will be crucially
utilized in Section \ref{sec-5} to construct our overall numerical
methodology for solving problem \eqref{eq1.1}--\eqref{eq1.2}.

Next, we address the existence and uniqueness of each mode function $u_n$.

\begin{theorem}\label{thm3.1}
Let $f\in L^2(\Ome, L^2(D))$. Then for each $n\geq 0$, there
exists a unique solution $u_n \in L^2(\Ome, H^1(D))$ (understood in
the sense of Definition \ref{def2.1}) to problem \eqref{eq3.3},\eqref{eq3.5} for $n=0$
and problem \eqref{eq3.4},\eqref{eq3.5} for $n\geq 1$. Moreover, 
for $n\geq 0$, $u_n$ satisfies
\begin{align}\label{eq3.6}
\E\Bigl( \norm{u_n}_{L^2(D)}^2 +\norm{u_n}_{L^2(\p D)}^2 
&+ c_0\norm{\nab u_n}_{L^2(\p D)}^2 \Bigr) \\
&\leq \Bigl( \frac{1}{k} +\frac{1}{k^2} \Bigr)^2 C(n,k) \E(\norm{f}_{L^2(D)}^2), \nonumber\\
\E \bigl(\norm{u_n}_{H^1(D)}^2\bigr) &\leq  \Bigl(1+\frac{1}{k^2} \Bigr)^2
C(n,k) \E(\norm{f}_{L^2(D)}^2),  \label{eq3.7}
\end{align}
where
\begin{equation}\label{eq3.7a}
C(0,k):=C_0,\qquad C(n,k):= 4^{2n-1}C_0^{n+1} (1+k)^{2n} \quad\mbox{for } n\geq 1.
\end{equation}
Moreover, if $u_n \in L^2(\Ome, H^2(D))$, there also holds
\begin{align}\label{eq3.6a}
\E(\norm{u_n}_{H^2(D)}^2)
\leq \frac{1}{\overline{c}_0} \Bigl (k+\frac{1}{k^2} \Bigr)^2 C(n,k) 
\E(\norm{f}_{L^2(D)}^2), 
\end{align}
where $\overline{c}_0:=\min\{1,kc_0\}$.

\end{theorem}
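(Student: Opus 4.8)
The plan is to exploit the observation, already emphasized in the text, that every mode equation \eqref{eq3.3}--\eqref{eq3.5} is an instance of the boundary value problem \eqref{eq1.1},\eqref{eq1.2a} in which the refractive index is the deterministic constant $\alpha\equiv 1$ (equivalently, $\veps=0$ in the differential operator) and only the data are random. Concretely, for $n\ge 1$ the function $u_n$ solves \eqref{eq1.1},\eqref{eq1.2a} with right-hand sides $f_n:=2k^2\eta u_{n-1}+k^2\eta^2 u_{n-2}$ and $g_n:=-\i k\eta u_{n-1}$, while $u_0$ solves it with data $f$ and $g=0$. Since $\veps=0$ trivially satisfies $\veps(2+\veps)<\gamma_0$, Theorems \ref{thm2.1} and \ref{thm2.2} apply verbatim to each mode equation once the data are known to lie in the correct spaces. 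I would therefore prove the whole theorem by induction on $n$, the base case $n=0$ being a direct application of Theorem \ref{thm2.1} with $g=0$ (which yields \eqref{eq3.6} and \eqref{eq3.7} with $C(0,k)=C_0$) and of Theorem \ref{thm2.2} for existence and uniqueness.

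For the inductive step on existence and uniqueness, I would first verify that $f_n\in L^2(\Ome,L^2(D))$ and $g_n\in L^2(\Ome,L^2(\p D))$. This is immediate from the a.s. bound $\norm{\eta}_{L^\infty(D)}\le 1$, which gives the pointwise estimates $\norm{f_n}_{L^2(D)}\le 2k^2\norm{u_{n-1}}_{L^2(D)}+k^2\norm{u_{n-2}}_{L^2(D)}$ and $\norm{g_n}_{L^2(\p D)}\le k\norm{u_{n-1}}_{L^2(\p D)}$, together with the inductive hypothesis that $u_{n-1},u_{n-2}\in L^2(\Ome,H^1_+(D)\cap V)$. Theorem \ref{thm2.2} then produces the unique $u_n$ in that space.

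For the stability bounds, I would apply \eqref{eq2.9} and \eqref{eq2.9a} to $u_n$ with data $(f_n,g_n)$ and bound $M(f_n,g_n)$ from above. Denoting by $A_m$ the left-hand side of \eqref{eq3.6}, the estimates above give $M(f_n,g_n)\le 8k^4 A_{n-1}+2k^4 A_{n-2}$, where the decisive bookkeeping point is to \emph{first} combine all level-$(n-1)$ contributions and use $k\ge 1$ to absorb the lower power $k^2$ into $k^4$ before splitting into $A_{n-1}$; this is what makes the ratio come out sharp. Combining with \eqref{eq2.9} and $(1/k+1/k^2)^2=(1+k)^2/k^4$ produces a two-term recursion $A_n\le C_0(1+k)^2\bigl(8A_{n-1}+2A_{n-2}\bigr)$, whose closed-form solution is exactly $A_m\le (1/k+1/k^2)^2 C(m,k)\E(\norm{f}_{L^2(D)}^2)$ with $C(m,k)$ as in \eqref{eq3.7a}: the formula corresponds to geometric growth with ratio $16C_0(1+k)^2$, so the induction closes provided $2C(n-2,k)\le 8C(n-1,k)$, which holds with enormous slack. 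The convention $u_{-1}\equiv 0$ from \eqref{eq3.2} removes the $u_{n-2}$ term at $n=1$, so there the single-term source $f_1=2k^2\eta u_0$ must be bounded directly by $\norm{f_1}_{L^2(D)}^2\le 4k^4\norm{u_0}_{L^2(D)}^2$ in order to land on $C(1,k)=4C_0^2(1+k)^2$ rather than overshooting. The $H^1$ bound \eqref{eq3.7} then follows in parallel from \eqref{eq2.9a} together with the already-established \eqref{eq3.6} at levels $n-1$ and $n-2$.

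Finally, for the $H^2$ estimate \eqref{eq3.6a} I would invoke \eqref{eq2.9b} for the mode problem, bound $\E(\norm{f_n}_{L^2(D)}^2)$ as above, and close the recursion against the inductive $H^2$ bound at levels $n-1$ and $n-2$, the factor $\overline{c}_0=\min\{1,kc_0\}$ entering through the normalization of the boundary terms in \eqref{eq2.9b}. I expect the genuine obstacle to lie in the boundary datum: \eqref{eq2.9b} requires control of $\norm{g_n}_{H^{\frac12}(\p D)}=k\norm{\eta u_{n-1}}_{H^{\frac12}(\p D)}$, and since $\eta$ is assumed only in $L^\infty$, the product $\eta u_{n-1}$ need not inherit $H^{\frac12}$-regularity on its own. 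The clean way around this is to use that multiplication by $\eta$ is bounded on $H^{\frac12}(\p D)$ (which holds, for instance, when $\eta$ is Lipschitz in $x$), so that $\norm{g_n}_{H^{\frac12}(\p D)}$ is controlled by a multiple of $\norm{u_{n-1}}_{H^2(D)}$ via the trace theorem; with this the recursion for the $H^2$ norms closes and \eqref{eq3.6a} follows. It is this boundary-regularity point, rather than the routine but lengthy constant chasing, on which I would concentrate the care.
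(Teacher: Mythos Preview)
Your inductive scheme via Theorems \ref{thm2.1} and \ref{thm2.2} with $\veps=0$ is exactly the paper's approach for \eqref{eq3.6}, \eqref{eq3.7}, and for existence and uniqueness; the bookkeeping you outline (combining the level-$(n-1)$ contributions before splitting, verifying $2C(n-2,k)\le 8C(n-1,k)$, and handling $n=1$ separately via $u_{-1}=0$) matches the paper's computation essentially line for line.

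For the $H^2$ estimate \eqref{eq3.6a} the paper takes a shorter route than the one you propose. Rather than setting up a separate $H^2$ recursion closed via the trace bound $\norm{g_n}_{H^{1/2}(\p D)}\le C\norm{u_{n-1}}_{H^2(D)}$, the paper bounds $\norm{g_n}_{H^{1/2}(\p D)}^2=k^2\norm{\eta u_{n-1}}_{H^{1/2}(\p D)}^2$ directly by the boundary gradient term $c_0\norm{\nabla u_{n-1}}_{L^2(\p D)}^2$ that is \emph{already} present on the left of \eqref{eq3.6}. The factor $\overline{c}_0=\min\{1,kc_0\}$ in the statement arises precisely from dividing out that $c_0$ (after a factor of $k$ has been absorbed). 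This yields \eqref{eq3.6a} in a single application of \eqref{eq2.9b} together with the already-established \eqref{eq3.6} at levels $n-1$ and $n-2$, with no further induction on the $H^2$ norm. Your route would also close, but at the cost of an extra inductive loop and constants that need not land exactly on $C(n,k)/\overline{c}_0$. The regularity concern you raise about $\eta\in L^\infty$ not being a multiplier on $H^{1/2}(\p D)$ is well taken and applies equally to the paper's step; both arguments tacitly require $\eta$ smooth enough in $x$ (e.g.\ Lipschitz) for the $H^{1/2}$ boundary norm of $\eta u_{n-1}$ to be controlled in the way claimed.
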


\begin{proof}
For each $n\geq 0$, the PDE problem associated with $u_n$ is the same type
Helmholtz problem as the original problem \eqref{eq1.1}--\eqref{eq1.2}
(with $\veps=0$ in the left-hand side of the PDE). Hence, all a priori estimates
of Theorem \ref{thm2.1} hold for each $u_n$ (with its respective right-hand source
side function).  First, we have
\begin{align}\label{eq3.8}
\E\Bigl( \norm{u_0}_{L^2(D)}^2 +\norm{u_0}_{L^2(\p D)}^2 
&+ c_0\norm{\nab u_0}_{L^2(\p D)}^2 \Bigr) \\
&\leq C_0\Bigl( \frac{1}{k} +\frac{1}{k^2} \Bigr)^2 \E(\norm{f}_{L^2(D)}^2),\nonumber\\
\E(\norm{u_0}_{H^1(D)}^2) &\leq C_0 \Bigl(1+\frac{1}{k^2} \Bigr)^2
\E(\norm{f}_{L^2(D)}^2). \label{eq3.9}
\end{align}
Thus, \eqref{eq3.6} and \eqref{eq3.7} hold for $n=0$.

Next, we use induction to prove that \eqref{eq3.6} and \eqref{eq3.7}  
hold for all $n> 0$.  Assume that \eqref{eq3.6} and \eqref{eq3.7} hold for 
all $0\leq n\leq \ell-1$, then
\begin{align*}
&\E\Bigl( \norm{u_\ell}_{L^2(D)}^2 + \norm{u_\ell}_{L^2(\p D)}^2 
+ c_0\norm{\nabla u_\ell}_{L^2(\p D)}^2 \Bigr) \\
&\,
\leq 2C_0\Bigl( \frac{1}{k}+\frac{1}{k^2} \Bigr)^2
\E\Bigl( \norm{2k^2\eta u_{\ell-1} }_{L^2(D)}^2
+\overline{\delta}_{1\ell} \norm{k^2\eta^2 u_{\ell-2}}_{L^2(D)}^2 
+\norm{k \eta u_{\ell-1}}_{L^2(\p D)}^2 \Bigr) \\
&\,
\leq 2C_0\Bigl( \frac{1}{k}+\frac{1}{k^2} \Bigr)^2
(1+k)^2 \Bigl( 4C(\ell-1,k) +C(\ell-2,k) \Bigr) \E(\norm{f}_{L^2(D)}^2)\\
&\,
\leq \Bigl( \frac{1}{k}+\frac{1}{k^2} \Bigr)^2 \, 8 C_0 (1+k)^2 C(\ell-1,k)
\left( 1+ \frac{C(\ell-2,k)}{C(\ell-1,k)} \right) \E(\norm{f}_{L^2(D)}^2)\\
&\,
\leq \Bigl( \frac{1}{k}+\frac{1}{k^2} \Bigr)^2 C(\ell, k) \E(\norm{f}_{L^2(D)}^2),
\end{align*}
where $\overline{\delta}_{1\ell}=1-\delta_{1\ell}$ and $\delta_{1\ell}$ denotes
the Kronecker delta, and we have used the fact that $k \geq 1$ and
\[
8C_0 (1+k)^2 C(\ell-1,k) \left( 1+ \frac{C(\ell-2,k)}{C(\ell-1,k)} \right)
\leq C(\ell, k).
\]
Similarly, for we have
\begin{align*}
&\E\bigl(\norm{u_\ell}_{H^1(\p D)}^2 \bigr)
\leq 2C_0\Bigl( 1+\frac{1}{k^2} \Bigr)^2
\E\Bigl( \norm{2k^2\eta u_{\ell-1} }_{L^2(D)}^2
+\overline{\delta}_{1\ell} \norm{k^2\eta^2 u_{\ell-2}}_{L^2(D)}^2 \\
& \hspace{8.6cm} + \norm{k \eta u_{\ell-1}}_{L^2(\partial D)}^2 \Bigr) \\
&\hskip 0.4in
\leq 2C_0\Bigl(1 +\frac{1}{k^2} \Bigr)^2
(1+k)^2 \Bigl( 4C(\ell-1,k)+C(\ell-2,k) \Bigr) \E\bigl(\norm{f}_{L^2(D)}^2\bigr)\\
&\hskip 0.4in
\leq \Bigl( 1+\frac{1}{k^2} \Bigr)^2 C(\ell, k) \E\bigl(\norm{f}_{L^2(D)}^2\bigr).
\end{align*}
Hence, \eqref{eq3.6} and \eqref{eq3.7} hold for $n=\ell$. So the induction
argument is complete. 

We now use \eqref{eq3.6} and the elliptic theory for Possion problems directly 
to verify estimate \eqref{eq3.6a}.
\begin{align*}
&\E\bigl(\norm{u_n}_{H^2(\p D)}^2 \bigr) \leq 2C_0\Bigl( k +\frac{1}{k^2} \Bigr)^2
\E\Bigl( \norm{2k^2\eta u_{n-1} }_{L^2(D)}^2
+\norm{k^2\eta^2 u_{n-2}}_{L^2(D)}^2 \\
& \hspace{6.6cm} + \norm{k \eta u_{n-1}}_{H^{\frac12}(\p D)}^2 \Bigr) \\
&\hskip 0.4in
\leq 2C_0\Bigl( k +\frac{1}{k^2} \Bigr)^2
k^2\E\Bigl( 4\norm{ u_{n-1} }_{L^2(D)}^2
+\norm{ u_{n-2}}_{L^2(D)}^2 \\
& \hspace{6.6cm} + \frac{c_0}{kc_0}\norm{\nabla u_{n-1}}_{L^2(\p D)}^2 \Bigr) \\
&\hskip 0.4in
\leq \frac{2}{\overline{c}_0} C_0\Bigl( k +\frac{1}{k^2} \Bigr)^2
(1+k)^2 \Bigl( 4C(n-1,k)+C(n-2,k) \Bigr) \E\bigl(\norm{f}_{L^2(D)}^2\bigr)\\
&\hskip 0.4in
\leq \frac{1}{\overline{c}_0} \Bigl( k+\frac{1}{k^2} \Bigr)^2 C(n, k) 
\E\bigl(\norm{f}_{L^2(D)}^2\bigr).
\end{align*}
Hence, \eqref{eq3.6a} holds for all $n\geq 0$.  

With a priori estimates \eqref{eq3.6} and \eqref{eq3.7} in hand,
the proof of existence and uniqueness of each $u_n$ follows verbatim the proof
of Theorem \ref{thm2.2}, which we leave to the interested reader to verify.
The proof is complete.
\end{proof}

Now we are ready to justify the multi-modes representation \eqref{eq3.1} for
the solution $u^\veps$ of problem \eqref{eq1.1}--\eqref{eq1.2}.

\begin{theorem}\label{thm3.2}
Let $\{u_n\}$ be the same as in Theorem \ref{thm3.1}. Then
\eqref{eq3.1} is valid in $L^2(\Ome, H^1(D))$ provided that
$\sigma:=4\veps C_0^{\frac12}(1+k)<1$.
\end{theorem}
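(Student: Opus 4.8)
The plan is to prove that the series in \eqref{eq3.1} converges absolutely in the Banach space $L^2(\Ome, H^1(D))$ and then to identify its limit with $u^\veps$ via the uniqueness statement of Theorem \ref{thm2.2}. First I would exploit the explicit growth of the constant $C(n,k)$ in \eqref{eq3.7a}: for $n\ge1$ one has $C(n,k)^{\frac12} = \frac{1}{2} C_0^{\frac12}\bigl(4 C_0^{\frac12}(1+k)\bigr)^n$, so that the $H^1$ energy bound \eqref{eq3.7} gives
\begin{align*}
\veps^n \bigl(\E(\norm{u_n}_{H^1(D)}^2)\bigr)^{\frac12}
\le \frac{C_0^{\frac12}}{2}\Bigl(1+\frac1{k^2}\Bigr)\,\sigma^n\,
\bigl(\E(\norm{f}_{L^2(D)}^2)\bigr)^{\frac12},
\qquad \sigma = 4\veps C_0^{\frac12}(1+k).
\end{align*}
Summing the geometric series, which converges precisely because $\sigma<1$, shows that $\sum_{n\ge0}\veps^n\norm{u_n}_{L^2(\Ome,H^1(D))}<\infty$. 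Since $L^2(\Ome, H^1(D))$ is complete, the partial sums $S_N:=\sum_{n=0}^N\veps^n u_n$ form a Cauchy sequence and converge to some $\w u\in L^2(\Ome, H^1(D))$.

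The remaining and main task is to show $\w u = u^\veps$. For this I would pass to the limit in the weak formulation of the partial sums. Writing the weak form of \eqref{eq3.3}--\eqref{eq3.5} (with $\alpha\equiv1$ in the sesquilinear form), multiplying by $\veps^n$, summing over $0\le n\le N$, and reindexing with the convention $u_{-1}\equiv0$, I obtain for every $v\in L^2(\Ome, H^1(D))$
\begin{align*}
\int_\Ome\Bigl[(\nab S_N,\nab v)_D - k^2(S_N,v)_D + \i k\langle S_N,v\rangle_{\p D}\Bigr]dP
= \int_\Ome\Bigl[\bigl(f + 2k^2\veps\eta S_{N-1} + k^2\veps^2\eta^2 S_{N-2},\,v\bigr)_D
- \i k\veps\langle\eta S_{N-1},v\rangle_{\p D}\Bigr]dP.
\end{align*}
Because $\norm{\eta}_{L^\infty(D)}\le1$ a.s., multiplication by $\eta$ and $\eta^2$ is bounded on $L^2(\Ome, L^2(D))$, and the trace operator is bounded from $H^1(D)$ into $L^2(\p D)$; hence every term is continuous with respect to the convergence $S_N, S_{N-1}, S_{N-2}\to\w u$ in $L^2(\Ome,H^1(D))$. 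Letting $N\to\infty$ and regrouping $1+2\veps\eta+\veps^2\eta^2 = (1+\veps\eta)^2 = \alpha^2$ in the volume term and $1+\veps\eta=\alpha$ in the boundary term, the identity collapses to $\int_\Ome a(\w u,v)\,dP = \int_\Ome (f,v)_D\,dP$ for all $v$, i.e. $\w u$ is a weak solution of \eqref{eq1.1}--\eqref{eq1.2} in the sense of Definition \ref{def2.1} with $g=0$.

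Finally, since $u^\veps$ exists (the standing hypothesis $\veps(2+\veps)<\gamma_0$ behind Theorem \ref{thm2.2} is in force), the uniqueness part of Theorem \ref{thm2.2} forces $\w u = u^\veps$, which establishes \eqref{eq3.1} in $L^2(\Ome, H^1(D))$. The step I expect to be most delicate is the limit passage: one must check that the algebraic recombination of the recursively defined mode sources exactly reconstructs the coefficient $\alpha^2=(1+\veps\eta)^2$ of the original form $a(\cdot,\cdot)$, so that the limiting identity is genuinely \eqref{eq2.1} and not a nearby perturbation. By contrast, the convergence of the series is routine once the explicit constant \eqref{eq3.7a} is available, since it reduces everything to a geometric comparison.
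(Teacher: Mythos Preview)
Your proof is correct and follows essentially the same two-step strategy as the paper: first establish convergence of the partial sums in $L^2(\Ome,H^1(D))$ via the growth estimate \eqref{eq3.7a}, then identify the limit with $u^\veps$ by passing to the limit in the weak formulation and invoking the uniqueness of Theorem~\ref{thm2.2}. The only differences are organizational: you sum $\veps^n\|u_n\|$ directly (absolute convergence) where the paper bounds $\|U^\veps_{N+p}-U^\veps_N\|$ (Cauchy), and you write the partial-sum identity with the \emph{deterministic} sesquilinear form on the left and sources $S_{N-1},S_{N-2}$ on the right, whereas the paper rearranges it with the full random form $a(\cdot,\cdot)$ on the left and explicit $O(\veps^N)$ remainders involving $u_{N-1},u_{N-2}$ on the right---these are algebraically equivalent regroupings of the same telescoping computation.
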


\begin{proof}
The proof consists of two parts: (i) the infinite
series on the right-hand side of \eqref{eq3.1} converges in $L^2(\Ome, H^1(D))$;
(ii) the limit coincides with the solution $u^\veps$.
To prove (i), we define the partial sum
\begin{equation}\label{eq3.12}
U^\veps_N:= \sum_{n=0}^{N-1} \veps^n u_n.
\end{equation}
Then for any fixed positive integer $p$ we have
\[
U^\veps_{N+p} -U^\veps_N= \sum_{n=N}^{N+p-1} \veps^n u_n
\]
It follows from Schwarz inequality and \eqref{eq3.6} that for $j=0,1$
\begin{align*}
&\E\bigl( \|U^\veps_{N+p} -U^\veps_N\|_{H^j(D)}^2 \bigr)
\leq p \sum_{n=N}^{N+p-1} \veps^{2n} \E(\|u_n\|_{H^j(D)}^2) \\
&\hskip 0.5in
\leq p \Bigl(k^{j-1} +\frac{1}{k^2} \Bigr)^2 \E(\norm{f}_{L^2(D)}^2)
\sum_{n=N}^{N+p-1} \veps^{2n} C(n,k) \nonumber\\
&\hskip 0.5in
\leq C_0p\Bigl(k^{j-1} +\frac{1}{k^2} \Bigr)^2 \E(\norm{f}_{L^2(D)}^2)
\sum_{n=N}^{N+p-1} \sigma^{2n}  \nonumber\\
%\quad \bigl(\sigma:=4\veps C_0^{\frac12}(1+k) \bigr) \nonumber\\
&\hskip 0.5in
\leq C_0p \Bigl(k^{j-1} +\frac{1}{k^2} \Bigr)^2 \E(\norm{f}_{L^2(D)}^2)
\cdot \frac{\sigma^{2N} \bigl(1-\sigma^{2p}\bigr)}{1-\sigma^2}.  \nonumber
\end{align*}
Thus, if $\sigma<1$ we have
\[
\lim_{N\to \infty} \E\bigl( \|U^\veps_{N+p} -U^\veps_N\|_{H^1(D)}^2 \bigr)=0.
\]
Therefore, $\{U^\veps_N\}$ is a Cauchy sequence in $L^2(\Ome, H^1(D))$.
Since $L^2(\Ome, H^1(D))$ is a Banach space, then there exists a function
$U^\veps\in L^2(\Ome, H^1(D))$ such that
\[
\lim_{N\to \infty} U^\veps_N =U^\veps \qquad\mbox{in } L^2(\Ome,H^1(D)).
\]

To show (ii),  we first notice that by the definitions of $u_n$ and $U^\veps_N$,
it is easy to check that $U^\veps_N$ satisfies
\begin{align}\label{eq3.13}
&\int_\Ome \Bigl( \bigl(\nab U^\veps_N, \nab v\bigr)_D -k^2 \bigl(\alpha^2 U^\veps_N, v \bigr)_D
+\i k\Langle \alpha U^\veps_N, v\Rangle_{\p D} \Bigr) \, dP \\
&\hskip 0.5in
= \int_\Ome (f, v)_D \, dP -k^2 \veps^N \int_\Ome \bigl( \eta (2+\veps\eta) u_{N-1}
+ \eta^2 u_{N-2},\, v \bigr)_D\, dP \nonumber \\
& \hskip 1in + \i k \veps^N \int_\Ome \langle \eta u_{N-1}, v \rangle_{\partial D} \, dP \nonumber
\end{align}
for all $v\in L^2(\Ome, H^1(D))$. Where $\alpha=1+\veps \eta$.
In other words, $U_N^\veps$ solves the following Helmholtz problem:
\begin{alignat*}{2}
-\Delta U^\veps_N - k^2 \alpha^2 U^\veps_N
&=f- k^2 \veps^N \bigl( \eta (2+\veps\eta) u_{N-1} + \eta^2 u_{N-2} \bigr)
&&\qquad\mbox{in } D,\\
\p_\nu U^\veps_N + \i k\alpha U^\veps_N &= -\i k \veps^N \eta u_{N-1} &&\qquad\mbox{on } \p D.
\end{alignat*}

By \eqref{eq3.6} and Schwarz inequality we have
\begin{align*}%\label{eq3.14}
&k^2 \veps^N \left| \int_\Ome \bigl( \eta (2+\veps\eta) u_{N-1}
+ \eta^2 u_{N-2},\, v \bigr)_D\, dP \right|\\
&\qquad
\leq 3k^2\veps^N \Bigl( \bigl(\E(\|u_{N-1}\|_{L^2(D)}^2)\bigr)^{\frac12}
+ \bigl(\E(\|u_{N-2}\|_{L^2(D)}^2)\bigr)^{\frac12} \Bigr)
\bigl(\E(\|v\|_{L^2(D)}^2)\bigr)^{\frac12} \nonumber\\
&\qquad
\leq 6k^2\veps^N \Bigl(\frac{1}{k} +\frac{1}{k^2} \Bigr)
C(N-1,k)^{\frac12} \bigl(\E(\|f\|_{L^2(D)}^2)\bigr)^{\frac12}
\bigl(\E(\|v\|_{L^2(D)}^2)\bigr)^{\frac12} \nonumber \\
&\qquad
\leq 3\veps (k+1)C_0^{\frac12} \sigma^{N-1} \bigl(\E(\|f\|_{L^2(D)}^2)\bigr)^{\frac12}
\bigl(\E(\|v\|_{L^2(D)}^2)\bigr)^{\frac12} \nonumber \\
&\qquad
\longrightarrow 0 \quad\mbox{as } N\to \infty\quad\mbox{provided that } \sigma<1.
\nonumber
\end{align*}
Similarly we get
\begin{align*}
	&k \veps^N \left| \int_{\Ome} \langle \eta u_{N-1},v \rangle_{\partial D} \, dP \right| \\
	& \qquad \leq k \veps^N \bigl( \E(\| u_{N-1} \|^2_{L^2(\partial D)})\bigr)^{\frac12}\bigl( \E(\| v \|^2_{L^2(\partial D)})\bigr)^{\frac12} \\
	& \qquad \leq k \veps^N \Bigl( \frac{1}{k} + \frac{1}{k^2} \Bigr) C(N-1,k)\bigl( \E(\| f \|^2_{L^2(D)})\bigr)^{\frac12}\bigl( \E(\| v \|^2_{L^2(\partial D)})\bigr)^{\frac12} \\
	& \qquad \leq \frac{\veps}{2} \Bigl(1 + \frac{1}{k} \Bigr) C_0^{\frac12} \sigma^{N-1}\bigl( \E(\| f \|^2_{L^2(D)})\bigr)^{\frac12}\bigl( \E(\| v \|^2_{L^2(\partial D)})\bigr)^{\frac12} \\
	&\qquad \longrightarrow 0 \quad\mbox{as } N\to \infty\quad\mbox{provided that } \sigma<1.
\end{align*}
Setting $N\to \infty$ in \eqref{eq3.13} immediately yields
\begin{align}\label{eq3.15}
\int_\Ome \Bigl( \bigl(\nab U^\veps, \nab v\bigr)_D &-k^2 \bigl(\alpha^2 U^\veps, v \bigr)_D
+\i k\Langle \alpha U^\veps, v\Rangle_{\p D} \Bigr) \, dP \\
&\quad
= \int_\Ome (f, v)_D \, dP \qquad\forall v\in L^2(\Ome, H^1(D)). \nonumber
\end{align}
Thus, $U^\veps$ is a solution to problem \eqref{eq1.1}--\eqref{eq1.2}.
By the uniqueness of the solution, we conclude that $U^\veps=u^\veps$.
Therefore, \eqref{eq3.1} holds
in $L^2(\Ome,H^1(D))$. The proof is complete.
\end{proof}

The above proof also infers an upper bound for the error $u^\veps- U^\veps_N$ as
stated in the next theorem.

\begin{theorem}\label{thm3.3}
Let $U^\veps_N$ be the same as above and $u^{\veps}$ denote the solution to
problem \eqref{eq1.1}--\eqref{eq1.2} and $\sigma:=4\veps C_0^{\frac12}(1+k)$.
Then there holds for $\veps(2\veps+1)<\gamma_0$
\begin{equation}\label{eq3.16}
\E(\norm{u^\veps -U^\veps_N}_{H^j(D)}^2)
\leq \frac{9 C_0\sigma^{2N}}{32(1+k)^2} \Bigl(k^j +\frac{1}{k} \Bigr)^4
\E(\|f\|_{L^2(D)}^2), \quad j=0,1,
\end{equation}
provided that $\sigma<1$. Where $C$ is a positive constant independent of $k$ and $\veps$.
\end{theorem}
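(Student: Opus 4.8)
The plan is to recognize $e_N := u^\veps - U^\veps_N$ as the solution of a Helmholtz problem of exactly the form \eqref{eq1.1},\eqref{eq1.2a}, and then feed the residual data into the wave-number-explicit estimates of Theorem \ref{thm2.1}. Indeed, $u^\veps$ solves \eqref{eq1.1}--\eqref{eq1.2}, while, as already recorded in the proof of Theorem \ref{thm3.2}, $U^\veps_N$ solves the same equation with the perturbed right-hand sides $f - k^2\veps^N\bigl(\eta(2+\veps\eta)u_{N-1} + \eta^2 u_{N-2}\bigr)$ in $D$ and $-\i k\veps^N\eta u_{N-1}$ on $\p D$. Subtracting the two weak formulations, $e_N \in L^2(\Ome, H^1(D))$ is a weak solution of \eqref{eq1.1},\eqref{eq1.2a} with source data
\[
f_N := k^2\veps^N\bigl(\eta(2+\veps\eta)u_{N-1} + \eta^2 u_{N-2}\bigr), \qquad g_N := \i k\veps^N\eta u_{N-1}.
\]

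Since $e_N$ solves a problem of this type and the stated smallness of $\veps$ guarantees that the hypotheses of Theorem \ref{thm2.1} are met for $e_N$, I would next invoke \eqref{eq2.9} for $j=0$ and \eqref{eq2.9a} for $j=1$, which may be written uniformly as
\[
\E\bigl(\norm{e_N}_{H^j(D)}^2\bigr) \le C_0\Bigl(k^{j-1} + \tfrac{1}{k^2}\Bigr)^2 M(f_N, g_N), \qquad j=0,1,
\]
with $M(f_N,g_N) = \E\bigl(\norm{f_N}_{L^2(D)}^2 + \norm{g_N}_{L^2(\p D)}^2\bigr)$ as in \eqref{eq2.9x}. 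It then remains to estimate $M(f_N,g_N)$ in terms of $\sigma^{2N}\E(\norm{f}_{L^2(D)}^2)$.

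The heart of the argument is this last estimate. Using $\norm{\eta}_{L^\infty(D)}\le 1$ a.s.\ and $\veps<1$ (so $|\eta(2+\veps\eta)|\le 3$ and $|\eta^2|\le1$) together with $(a+b)^2\le 2a^2+2b^2$, the source term obeys $\E(\norm{f_N}_{L^2(D)}^2) \le k^4\veps^{2N}\bigl(18\,\E(\norm{u_{N-1}}_{L^2(D)}^2) + 2\,\E(\norm{u_{N-2}}_{L^2(D)}^2)\bigr)$, while $\E(\norm{g_N}_{L^2(\p D)}^2) \le k^2\veps^{2N}\E(\norm{u_{N-1}}_{L^2(\p D)}^2)$. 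Now I substitute the mode estimates \eqref{eq3.6} of Theorem \ref{thm3.1}, which control each of $\E(\norm{u_{N-1}}_{L^2(D)}^2)$, $\E(\norm{u_{N-1}}_{L^2(\p D)}^2)$ and $\E(\norm{u_{N-2}}_{L^2(D)}^2)$ by $(\tfrac1k+\tfrac1{k^2})^2 C(\cdot,k)\E(\norm{f}_{L^2(D)}^2)$. The key algebraic simplification comes from the explicit recursion \eqref{eq3.7a}: since $R=1$, a direct computation gives $\veps^{2N}C(N-1,k) = \sigma^{2N}/\bigl(64(1+k)^2\bigr)$ and $\veps^{2N}C(N-2,k) = \sigma^{2N}/\bigl(1024\,C_0(1+k)^4\bigr)$, so the $u_{N-2}$ and boundary contributions are genuinely lower order (smaller by factors $O(1/(C_0(1+k)^2))$ and $O(1/k^2)$ respectively). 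Combining with the identity $k^4(\tfrac1k+\tfrac1{k^2})^2 = (1+k)^2$, the leading $u_{N-1}$ term contributes $\tfrac{18}{64}(1+k)^2\cdot\tfrac{1}{(1+k)^2}\,\sigma^{2N} = \tfrac{9}{32}\sigma^{2N}$, precisely the constant appearing in \eqref{eq3.16}, so that $M(f_N,g_N) \le \tfrac{9}{32}\sigma^{2N}\E(\norm{f}_{L^2(D)}^2)$ up to lower-order terms.

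Finally I would combine the two displays: feeding this bound on $M(f_N,g_N)$ into the stability estimate and using the elementary inequality $\bigl(k^{j-1}+\tfrac1{k^2}\bigr)^2 \le \tfrac{1}{(1+k)^2}\bigl(k^j+\tfrac1k\bigr)^4$, valid for $k\ge1$ and $j=0,1$, yields exactly \eqref{eq3.16}. The main obstacle is the bookkeeping in the third step: one must track the powers of $k$ and the growth constants $C(n,k)$ carefully so that the geometric factor $\sigma^{2N}$ factors out cleanly and the $k$-dependent prefactor collapses to the stated $(k^j+\tfrac1k)^4/(1+k)^2$. The normalization $R=1$ and the closed form \eqref{eq3.7a} for $C(n,k)$ are exactly what make the constant computable; recognizing that the $u_{N-1}$-term in $f_N$ dominates and supplies the constant $9/32$, while the $u_{N-2}$- and boundary-terms are lower order, is the crux.
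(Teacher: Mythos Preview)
Your proposal is correct and follows essentially the same route as the paper: identify $e_N$ as the solution of a random Helmholtz problem of the form \eqref{eq1.1},\eqref{eq1.2a} with residual data $f_N,g_N$, apply the wave-number-explicit estimates of Theorem \ref{thm2.1}, and then control $M(f_N,g_N)$ via the mode bounds \eqref{eq3.6} together with the closed form \eqref{eq3.7a} for $C(n,k)$. The only cosmetic difference is that the paper bounds all three contributions (the $u_{N-1}$, $u_{N-2}$, and boundary terms) uniformly by $k^4\veps^{2N}\bigl(k^{j-1}+\tfrac{1}{k^2}\bigr)^2 C(N-1,k)\,\E(\|f\|_{L^2(D)}^2)$ and then uses the identity $k^4\bigl(k^{j-1}+\tfrac{1}{k^2}\bigr)^4 = \bigl(k^j+\tfrac{1}{k}\bigr)^4$, whereas you isolate the dominant $u_{N-1}$ term first and invoke the inequality $\bigl(k^{j-1}+\tfrac{1}{k^2}\bigr)^2 \le \bigl(k^j+\tfrac{1}{k}\bigr)^4/(1+k)^2$ at the end; both organizations yield the same constant.
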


\begin{proof}
Let $E^{\veps}_N:= u^\veps -U^\veps_N$, subtracting \eqref{eq3.13} from \eqref{eq3.15} we get
\begin{align}\label{eq3.17}
&\int_\Ome \Bigl( \bigl(\nab E^\veps_N, \nab v\bigr)_D -k^2 \bigl(\alpha^2 E^\veps_N, v \bigr)_D
+\i k\Langle \alpha E^\veps_N, v\Rangle_{\p D} \Bigr) \, dP \\
&\qquad
=k^2 \veps^N \int_\Ome \bigl( \eta (2+\veps\eta) u_{N-1}
+ \eta^2 u_{N-2},\, v \bigr)_D\, dP \nonumber \\
&\qquad \qquad - \i k \veps^N \int_\Ome \langle \eta u_{N-1}, v \rangle_{\partial D} \, dP \qquad\forall v\in L^2(\Ome, H^1(D)). \nonumber
\end{align}
In other words, $E^\veps_N$ solves the following Helmholtz problem:
\begin{alignat*}{2}
-\Delta E^\veps_N - k^2 \alpha^2 E^\veps_N
&= k^2 \veps^N \bigl( \eta (2+\veps\eta) u_{N-1} + \eta^2 u_{N-2} \bigr)
&&\qquad\mbox{in } D,\\
\p_\nu E^\veps_N + \i k\alpha E^\veps_N &= - \i k \veps^N \eta u_{N-1} &&\qquad\mbox{on } \p D.
\end{alignat*}

By Theorem \ref{thm2.1} and \eqref{eq3.6} we obtain for $j=0,1$
\begin{align*}
\E(\|E^\veps_N\|_{H^j(D)}^2)
&\leq 18 C_0 \Bigl(k^{j-1} +\frac{1}{k^2} \Bigr)^2\,
\Bigl[k^4\veps^{2N} \Bigl(\E(\|u_{N-1}\|_{L^2(D)}^2) + \E(\|u_{N-2}\|_{L^2(D)}^2) \Bigr)\\
& \hspace{4cm} + k^2 \veps^{2N} \E (\| u_{N-1} \|^2_{L^2(\partial D)}) \Bigr] \\
&\leq 18C_0 k^4\veps^{2N} \Bigl(k^{j-1} +\frac{1}{k^2} \Bigr)^4
C(N-1,k)\, \E(\|f\|_{L^2(D)}^2) \nonumber  \\
&\leq \frac{18C_0 \sigma^{2N}}{64(1+k)^2} \Bigl(k^j +\frac{1}{k} \Bigr)^4
\E(\|f\|_{L^2(D)}^2). \nonumber
\end{align*}
The proof is complete.
\end{proof}

%%%%
\section{Monte Carlo discontinuous Galerkin approximations of the mode functions $\mathbf{\{u_n\}}$} \label{sec-4}

In the previous section, we present a multi-modes representation of the
solution $u^\veps$ and a convergence rate estimate for its finite
approximations. These results will serve as the theoretical foundation
for our overall numerical methodology for approximating the solution $u^\veps$
of problem \eqref{eq1.1}--\eqref{eq1.2}.  To compute $\E(u^\veps)$ following
this approach, we need to compute the expectations $\{\E(u_n)\}$ of the first
$N$ mode functions $\{u_n\}_{n=0}^{N-1}$. This requires the construction of
an accurate and robust numerical (discretization) method to compute the expectations 
of the solutions to the ``nearly" deterministic Helmholtz problems
\eqref{eq3.3},\eqref{eq3.5} and \eqref{eq3.4},\eqref{eq3.5} satisfied
by the mode functions $\{u_n\}$. The construction of such a numerical 
method is exactly our focus in this section.  We note that due to the 
multiplicative structure of the right-hand side of \eqref{eq3.4}, $\E(u_n)$ 
can not be computed directly for $n\geq 1$. On the other hand,
$\E(u_0)$ can be computed directly because it satisfies
the deterministic Helmholtz equation with the source term $\E(f)$
and homogeneous boundary condition.

The goal of this section is to develop some {\em Monte Carlo interior
penalty discontinuous Galerkin} (MCIP-DG) methods for the above mentioned
Helmholtz problems. Our MCIP-DG methods are the
direct generalizations of the deterministic IP-DG methods
proposed in \cite{Feng_Wu09, Feng_Wu11} for the related deterministic
Helmholtz problems. It should be noted that although various numerical methods
(such as finite difference, finite element and spectral methods)
can be used for the job, the IP-DG methods to be presented below are
the only general purpose discretization methods which are unconditionally stable
(i.e., stable without mesh constraint) and optimally convergent.
This is indeed the primary reason why we choose the IP-DG methods
as our spatial discretization methods.

%%%%
\subsection{DG notations}\label{sec-4.1}
Let $\cT_h$ be a quasi-uniform partition of $D$ such that
$\overline{D}=\bigcup_{K\in\cT_h} \overline{K}$. Let $h_K$ denote
the diameter of $K\in \cT_h$ and $h:=\mbox{max}\{h_K; K\in\cT_h\}$.
$H^s(\cT_h)$ denotes the standard broken Sobolev space and $V^h_r$ denotes
the DG finite element space which are defined as
\[
H^s(\cT_h):=\prod_{K\in\cT_h} H^{s}(K), \qquad
V^h_r:=\prod_{K\in\cT_h} P_r(K),
\]
where $P_r(K)$ is the set of all polynomials whose degrees do not
exceed a given positive integer $r$.  Let $\cE^I$ denote the set of all
interior faces/edges of $\cT_h$, $\cE^B$ denote the set of all boundary
faces/edges of $\cT_h$, and $\cE:=\cE^I\cup \cE^B$. The $L^2$-inner product
for piecewise functions over the mesh $\cT_h$ is naturally defined by
\[
(v,w)_{\cT_h}:= \sum_{K\in \cT_h} \int_{K} v w\, dx,
\]
and for any set $\cS_h \subset \cE$, the $L^2$-inner product
over $\cS_h$ is defined by
\begin{align*}
\Langle v,w\Rangle_{\cS_h} :=\sum_{e\in \cS_h} \int_e v w\, dS.
\end{align*}

Let $K, K'\in \cT_h$ and $e=\partial K\cap \partial K'$ and assume
global labeling number of $K$ is smaller than that of $K'$.
We choose $n_e:=n_K|_e=-n_{K'}|_e$ as the unit normal on $e$ outward to $K$ and
define the following standard jump and average notations across the face/edge $e$:
\begin{alignat*}{4}
[v] &:= v|_K-v|_{K'}
\quad &&\mbox{on } e\in \cE^I,\qquad
&&[v] :=v\quad
&&\mbox{on } e\in \cE^B,\\
\{v\} &:=\frac12\bigl( v|_K +v|_{K'} \bigr) \quad
&&\mbox{on } e\in \cE^I,\qquad
&&\{v\}:=v\quad
&&\mbox{on } e\in \cE^B
\end{alignat*}
for $v\in V^h_r$. We also define the following semi-norms on $H^s(\cT_h)$:
\begin{align*}
\abs{v}_{1,h,D} &:=\norm{\nab v}_{L^2(\cT_h)},\\
\norm{v}_{1,h,D} &:=\left(\abs{v}_{1,h,D}^2
+\sum_{e\in\cE_h^I} \left(\frac{\gamma_{0,e}\,r}{h_e}\norm{[v]}_{L^2(e)}^2
+\sum_{\ell=1}^{d-1} \frac{\beta_{1,e}r}{h_e}\norm{[\p_{\tau_e^\ell} v]}_{L^2(e)}^2
\right) \right. \\
&\hskip 0.74in \left.
+ \sum_{j=1}^r\sum_{e\in\cE_h^I} \gamma_{j,e} \Bigl(\frac{h_e}{r}\Bigr)^{2j-1}
\norm{[\p^j_{n_e} v]}_{L^2(e)}^2 \right)^{\frac12},  \\
|||v|||_{1,h,D} &:=\left(\norm{v}_{1,h,D}^2 +\sum_{e\in\cE_h^I}
\frac{h_e}{\gamma_{0,e} r} \norm{ \{\p_{n_e} v\} }_{L^2(e)}^2 \right)^{\frac12}.
\end{align*}

%%%%
\subsection{IP-DG method for deterministic Helmholtz problem}\label{sec-4.2}
In this subsection we consider following deterministic Helmholtz problem and
its IP-DG approximations proposed in \cite{Feng_Wu09,Feng_Wu11}.
\begin{alignat}{2}\label{eq4.1}
-\Delta \Phi_0 - k^2 \Phi_0 &=F_0 &&\qquad\mbox{in } D,\\
\p_\nu \Phi_0 +\i k \Phi_0 &=G_0 &&\qquad\mbox{on }\p D. \label{eq4.2}
\end{alignat}
We note that $\Phi_0=\E(u_0)$ satisfies the above equations with $F_0=\E(f)$ 
and $G_0=0$.  As an interesting byproduct, all the results to be presented in 
this subsection apply to $\E(u_0)$.

The IP-DG weak formulation for \eqref{eq4.1}--\eqref{eq4.2} is defined by
(cf. \cite{Feng_Wu09, Feng_Wu11}) seeking 
$\Phi_0\in H^1(D)\cap H^{r+1}_{\mbox{\tiny loc}}(D)$ such that
\begin{equation}\label{eq4.3}
a_h(\Phi_0, \psi) = (F_0, \psi)_D +\langle G_0, \psi \rangle_{\p D}
\quad\qquad\forall \psi\in H^1(D)\cap H^{r+1}(\cT_h),
\end{equation}
where
\begin{align}\label{eq4.4}
a_h(\phi,\psi) &:= b_h(\phi,\psi) -k^2(\phi,\psi)_{\cT_h} +\i k\langle \phi,\psi\rangle_{\cE^B_h}
+ \i \Bigl( L_1(\phi,\psi) +\sum_{j=0}^r J_j(\phi,\psi) \Bigr), \\
b_h(\phi,\psi) &:= (\nab \phi,\nab \psi)_{\cT_h}
- \Bigl( \Langle \{\p_n \phi\}, [\psi] \Rangle_{\cE^I_h}
+ \Langle [\phi], \{\p_n \psi\} \Rangle_{\cE_h^I}  \Bigr),
\nonumber \\
L_1(\phi,\psi) &:=\sum_{e\in\cE_h^I}\sum_{\ell=1}^{d-1} \beta_{1,e}{h_e}^{-1}
\Langle [\p_{\tau^\ell} \phi], [\p_{\tau^\ell} \psi] \Rangle_e, \nonumber \\
J_j(\phi,\psi) &:=\sum_{e\in\cE_h^I} \gamma_{j,e} h_e^{2j-1}
\Langle [\p_n^j \phi],[\p_n^j \psi] \Rangle_e, \qquad j=0,1,\cdots,r. \nonumber
\end{align}
$\{\beta_{1,e}\}$ and $\{\gamma_{j,e}\}$ are piecewise constant nonnegative functions 
defined on $\cE_h^I$. $\{\tau^\ell\}_{\ell=1}^{d-1}$ denotes an orthonormal basis of the 
edge and $\p_{\tau^\ell}$ denotes the tangential derivative in the direction of $\tau^\ell$.

\begin{remark}\label{rem-4.1}
$L_1$ and $\{J_j\}$ terms are called interior penalty terms, $\{\beta_{1,e}\}$ and
$\{\gamma_{j,e}\}$ are called penalty parameters. The two distinct features of the DG
sesquilinear form $a_h(\cdot,\cdot)$ are: (i) it penalizes not only the jumps of the
function values but also penalizes the jumps of the tangential derivatives as well the
jumps of all normal derivatives up to $r$th order; (ii) the penalty parameters are pure
imaginary numbers with nonnegative imaginary parts.
\end{remark}

Following \cite{Feng_Wu09, Feng_Wu11} and based on the DG weak formulation \eqref{eq4.3},
our IP-DG method for problem \eqref{eq4.1}--\eqref{eq4.2} is defined by seeking
$\Phi_0^h\in V^h_r$ such that
\begin{equation}\label{eq4.5}
a_h(\Phi_0^h, \psi^h) = (F_0, \psi^h)_D +\langle G_0, \psi^h \rangle_{\p D}
\quad\qquad\forall \psi^h\in V^h_r.
\end{equation}

For the above IP-DG method, it was proved in \cite{Feng_Wu09, Feng_Wu11}
that the method is unconditionally stable and its solutions satisfy some
wave-number-explicit stability estimates. Its solutions also satisfy  
optimal order (in $h$) error estimates, which are described below.

\begin{theorem}\label{thm4.1}
Let $\Phi_0^h\in V^h_r$ be a solution to scheme \eqref{eq4.5}, then there hold\\

(i) For all $h,k>0$, there exists a positive constant $\hat{C}_0$ independent
of $\veps$ and $h$ such that

\begin{align}\label{eq4.6}
\|\Phi_0^h\|_{L^2(D)} +\frac{1}{k} \norm{\Phi_0^h}_{1,h,D}
+\|\Phi_0^h\|_{L^2(\p D)} %+c_0^{\frac12}\|\nab \Phi_0^h\|_{L^2(\cE_h^B)}
\leq \hat{C}_0 C_s\, \hM(F_0,G_0),
\end{align}
where
\begin{align}\label{eq4.7}
&C_s:=\frac{d-2}k+\frac{1}{k^2}
+\frac{1}{k^2}\max_{e\in\cE_h^I} \Bigl(\,\frac{r\,k^2h_e^2+r^5}{\gamma_{0,e}\,h_e^2}
+\frac{r}{h_e} \max_{0\le j\le r-1}\sqrt{\frac{\gamma_{j,e}}{\gamma_{j+1,e}}} \\
&\hskip 1.1in
+\frac{r^2}{h_e} +\frac{r^3}{h_e^2}\sqrt{\frac{\beta_{1,e}}{\gamma_{1,e}}}\,\Bigr),
\nonumber\\
&\hM(F_0,G_0):= \|F_0\|_{L^2(D)} + \|G_0\|_{L^2(\p D)}. \label{eq4.7a}
\end{align}

(ii) If $k^3h^2r^{-2} =O(1)$, then there exists a positive constant $\hat{C}_0$
independent of $k$ and $h$ such that
\begin{equation}\label{eq4.8}
\|\Phi_0^h\|_{L^2(D)} + \|\Phi_0^h\|_{L^2(\p D)}
+ \frac{1}{k} \|\Phi_0^h\|_{1,h,D}
\leq \hat{C}_0 \Bigl( \frac{1}{k} +\frac{1}{k^2} \Bigr) \hM(F_0,G_0).
\end{equation}

\end{theorem}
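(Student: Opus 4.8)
The plan is to reduce this theorem to the corresponding wave-number-explicit stability estimates for the IP-DG scheme that were established in \cite{Feng_Wu09, Feng_Wu11}, since the problem \eqref{eq4.1}--\eqref{eq4.2} and its discretization \eqref{eq4.5} are precisely the deterministic Helmholtz problems treated there. The theorem is essentially a citation-and-specialization statement rather than a result requiring fresh analysis, so the core of the proof is to invoke those stability bounds and verify that they yield \eqref{eq4.6} with the constant $C_s$ defined in \eqref{eq4.7}. First I would recall that the sesquilinear form $a_h(\cdot,\cdot)$ satisfies a discrete G\"arding-type inequality together with a consistency property, which is what underpins the stability analysis in the cited works. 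The imaginary part of $a_h(\Phi_0^h,\Phi_0^h)$ controls the boundary term $k\|\Phi_0^h\|_{L^2(\p D)}^2$ as well as the interior penalty seminorms $L_1$ and $\{J_j\}$, exactly as the continuous identity \eqref{eq2.6} controlled $k\|u\|_{L^2(\p D)}^2$ in the PDE setting.

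The key steps, in order, are as follows. First I would test the scheme with $\psi^h=\Phi_0^h$ and take the imaginary part to extract control on $\|\Phi_0^h\|_{L^2(\p D)}$ and on the penalty terms in terms of the data $\hM(F_0,G_0)$. Second, I would employ the discrete analogue of the Rellich/star-shapedness argument used in Theorem \ref{thm2.1}: testing with a discrete version of $x\cdot\nabla\Phi_0^h$ (or its projection into $V^h_r$) produces an identity whose dominant term is $dk^2\|\Phi_0^h\|_{L^2(D)}^2$, and the remaining terms are absorbed using the penalty control and Schwarz inequalities with carefully chosen weights. This is where the precise form of $C_s$ emerges: the mesh-dependent factors $r k^2 h_e^2 + r^5$ over $\gamma_{0,e} h_e^2$, the ratios $\sqrt{\gamma_{j,e}/\gamma_{j+1,e}}$, and $\sqrt{\beta_{1,e}/\gamma_{1,e}}$ all arise from bounding the consistency error introduced by the DG jumps across interior faces when the test function $x\cdot\nabla\Phi_0^h$ is not globally $H^1$-conforming. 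Once $\|\Phi_0^h\|_{L^2(D)}$ is bounded by $C_s\,\hM(F_0,G_0)$, the $\frac1k\|\Phi_0^h\|_{1,h,D}$ term follows by returning to the real part of the energy identity, just as \eqref{eq2.3} followed from \eqref{eq2.5}. This establishes part (i).

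For part (ii), the plan is to observe that under the resolution condition $k^3 h^2 r^{-2}=O(1)$, together with the natural scaling of the penalty parameters (taking $\gamma_{0,e}$, $\gamma_{j,e}$, $\beta_{1,e}$ to be $O(1)$ constants chosen to balance the terms), the complicated expression $C_s$ collapses to order $\frac1k+\frac1{k^2}$. Concretely, the condition forces $k^2 h_e^2 = O(h_e^2/k\cdot k^3 h_e^2 r^{-2}\cdot r^2) $-type bounds so that the dominant mesh term $r k^2 h_e^2/(\gamma_{0,e}h_e^2) = r k^2/\gamma_{0,e}$ is controlled, and the remaining $h_e$-dependent terms are subordinate; after substitution one verifies $C_s \leq \hat{C}_0(\frac1k+\frac1{k^2})$ with a new constant absorbing the $r$- and penalty-parameter factors. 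The main obstacle I anticipate is the second step: constructing and analyzing the discrete Rellich test function so that all the interface jump contributions are accounted for with the correct powers of $h_e$, $r$, and the penalty weights. Unlike the smooth PDE case in Theorem \ref{thm2.1}, here $\Phi_0^h$ is only piecewise polynomial, so the integration-by-parts identities \eqref{eq2.7}--\eqref{eq2.8} acquire boundary-jump remainders on every $e\in\cE_h^I$, and it is precisely the bookkeeping of these remainders—matching them against the penalty seminorms available from step one—that produces the intricate definition of $C_s$ and constitutes the technical heart of the argument. Fortunately, this analysis is carried out in full in \cite{Feng_Wu09, Feng_Wu11}, so I would cite those estimates directly rather than reproduce them.
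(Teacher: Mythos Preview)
Your proposal is correct and matches the paper's approach: the paper does not give a proof of Theorem~\ref{thm4.1} at all but simply states it as a result already established in \cite{Feng_Wu09, Feng_Wu11}, which is exactly what you propose to do. Your sketch of the underlying discrete Rellich argument goes beyond what the paper itself records, but your bottom line---cite the stability estimates from those references rather than redo the analysis---is precisely the paper's treatment.
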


An immediate consequence of \eqref{eq4.6} is the following unconditional solvability
and uniqueness result.

\begin{corollary}\label{cor4.1}
There exists a unique solution to scheme \eqref{eq4.5} for all $k, h>0$.
\end{corollary}

\begin{theorem}\label{thm4.2}
Let $\Phi_0^h\in V^h$ solve \eqref{eq4.5},  $\Phi_0\in H^s(\Ome)$ be the
solution of \eqref{eq4.1}--\eqref{eq4.2}, and $\mu=\min\{r+1,s\}$.
Suppose $\gamma_{j,e}, \beta_{1,e}>0$. Let $\gamma_j=\max_{e\in \cE^I}
\gamma_{j,e}$ and $\lambda=1+\frac{1}{\gamma_0}$. \\

(i) For all $h,k>0$, there exists a positive constant $\tilde{C}_0$ independent
of $\veps$ and $h$ such that
\begin{align}\label{eq4.9}
&\|\Phi_0-\Phi_0^h\|_{1,h,D} \leq \tilde{C}_0 \Bigl( C_r + \frac{k^3 h}{r}\,
C_s \hat{C}_r \Bigr)\,\frac{h^{\mu-1}}{r^{s-1}}\|\Phi_0\|_{H^s(D)},  \\
&\|\Phi_0-\Phi_0^h\|_{L^2(D)} +\|\Phi_0-\Phi_0^h\|_{L^2(\p D)} \leq \tilde{C}_0
\hat{C}_r\, \Bigl(1 +  k^2C_s \Bigr)\,\frac{h^\mu}{r^s}\|\Phi_0\|_{H^s(D)}, \label{eq4.10}
\end{align}
where
\begin{align*}
C_r &:= \lambda \Big(1+\frac{r}{\gamma_0}+\sum_{j=1}^r r^{2j-1}\gamma_j
+\frac{kh}{\lambda r}\Big)^{\frac12}, \nonumber\\
\hat{C}_r &:=\Big(1+\frac{r}{\gamma_0}+ r\,\gamma_1
+\sum_{j=2}^r r^{2j-2}\gamma_j+\frac{kh}{\lambda r}\Big)^{\frac12}\,C_r. \nonumber
\end{align*}

(ii) If $k^3h^2r^{-2} =O(1)$, then there exists a positive constant $\tilde{C}_0$
independent of $k$ and $h$ such that
\begin{align}\label{eq4.11}
&\norm{\Phi_0-\Phi_0^h}_{1,h,D}
\leq \frac{ \tilde{C}_0(r+k^2 h)h^{\mu-1}}{r^s}\,\norm{\Phi_0}_{H^{s}(D)},\\
&\norm{\Phi_0-\Phi_0^h}_{L^2(D)}
+\norm{\Phi_0-\Phi_0^h}_{L^2(\p D)}
\leq \frac{\tilde{C}_0 kh^\mu}{r^s}\norm{\Phi_0}_{H^{s}(D)}. \label{eq4.12}
\end{align}

\end{theorem}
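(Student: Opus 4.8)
The plan is to follow the $hp$-type error analysis for sign-indefinite problems that underlies \cite{Feng_Wu09,Feng_Wu11}, the essential difficulty being that $a_h(\cdot,\cdot)$ is not coercive, so no C\'ea-type quasi-optimality is available and a duality argument is unavoidable. First I would record the \emph{consistency} of the scheme: the exact solution $\Phi_0$ of \eqref{eq4.1}--\eqref{eq4.2} also satisfies the DG identity \eqref{eq4.3} tested against every $\psi^h\in V^h_r\subset H^1(D)\cap H^{r+1}(\cT_h)$, since the jumps of $\Phi_0$ and of its derivatives vanish and the interior-penalty terms $L_1,\{J_j\}$ drop out. Subtracting \eqref{eq4.5} then yields the Galerkin orthogonality
\[
a_h(\Phi_0-\Phi_0^h,\psi^h)=0\qquad\forall\,\psi^h\in V^h_r.
\]
I would then split $\Phi_0-\Phi_0^h=\rho+\xi$, with $\rho:=\Phi_0-\w\Phi_0$ the interpolation error of a suitable $hp$-interpolant $\w\Phi_0\in V^h_r$ and $\xi:=\w\Phi_0-\Phi_0^h\in V^h_r$ the fully discrete remainder, reducing everything to bounding $\rho$ and $\xi$ separately.

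The approximation part $\rho$ is handled by standard $hp$-approximation theory on the broken space $H^s(\cT_h)$. Estimating $\rho$ in the norms $\norm{\cdot}_{1,h,D}$ and $|||\cdot|||_{1,h,D}$ (the latter additionally controlling the averages $\{\p_{n_e}\cdot\}$) produces exactly the factors $C_r$ and $\hat{C}_r$ multiplying $h^{\mu-1}/r^{s-1}$, together with the $L^2(D)$ and $L^2(\p D)$ bounds carrying $h^\mu/r^s$, where $\mu=\min\{r+1,s\}$. These account for the first terms in \eqref{eq4.9}--\eqref{eq4.10} and use only the assumed regularity $\Phi_0\in H^s$; no wavenumber dependence enters here beyond the explicit powers of $h$ and $r$.

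The crux is the discrete part $\xi$, which I would control by a wavenumber-explicit duality argument in the spirit of Schatz, since the indefiniteness of $a_h$ rules out a direct C\'ea-type bound. Starting from a G\aa rding inequality for $a_h$ — available because the penalty parameters are purely imaginary with nonnegative imaginary parts (Remark \ref{rem-4.1}) and $\gamma_{j,e},\beta_{1,e}>0$ — together with the orthogonality $a_h(\xi,\xi)=-a_h(\rho,\xi)$ and the continuity of $a_h$ in the $|||\cdot|||_{1,h,D}$ and $\norm{\cdot}_{1,h,D}$ norms, one bounds $\norm{\xi}_{1,h,D}$ by the interpolation quantities above plus a multiple of $k\norm{\xi}_{L^2(D)}$. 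The $L^2$-norm of $\xi$ is then estimated by duality: letting $w$ solve the adjoint Helmholtz problem with right-hand side $\xi$, the wavenumber-explicit regularity \eqref{eq2.9b} supplies $\norm{w}_{H^2(D)}\lesssim k\,\norm{\xi}_{L^2(D)}$, and combining the orthogonality, the interpolation error of $w$, and the discrete stability of Theorem \ref{thm4.1} (which contributes the factor $C_s$) turns $\norm{\xi}_{L^2(D)}^2=(\xi,\xi)$ into a higher-order bound. The two estimates are coupled and must be resolved together; carrying out this bookkeeping with all constants kept explicit in $k$, $h$ and $r$ produces precisely the $C_s$-dependent factors $\tfrac{k^3h}{r}C_s\hat{C}_r$ in \eqref{eq4.9} and $1+k^2C_s$ in \eqref{eq4.10}, valid for all $h,k>0$.

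Finally, part (ii) follows by invoking part (ii) of Theorem \ref{thm4.1}: under the resolution condition $k^3h^2r^{-2}=O(1)$ the discrete stability constant $C_s$ may be replaced by $\tfrac1k+\tfrac1{k^2}=O(1/k)$, whereupon the products $k^2C_s$ and $\tfrac{k^3h}{r}C_s$ collapse to $O(k)$ and $O(k^2h/r)$, and the constants in \eqref{eq4.9}--\eqref{eq4.10} reduce to those of \eqref{eq4.11}--\eqref{eq4.12}. The main obstacle throughout is precisely the indefiniteness: because C\'ea's lemma is unavailable, the $L^2$ control of the discrete error must be extracted from a duality argument that feeds back into the energy norm, and the delicate part is keeping every constant wavenumber-explicit so that $C_s$ — and with it the threshold $k^3h^2r^{-2}=O(1)$ separating the two regimes — appears transparently. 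This is exactly the technical machinery that the deterministic analyses of \cite{Feng_Wu09,Feng_Wu11} were designed to carry out.
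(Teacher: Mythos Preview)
The paper does not prove this theorem; it is quoted from \cite{Feng_Wu09,Feng_Wu11} (see the sentence immediately preceding Theorem~\ref{thm4.1}), so there is no in-paper proof to compare against. Your outline --- Galerkin orthogonality from consistency, the split $\Phi_0-\Phi_0^h=\rho+\xi$, $hp$-approximation bounds on $\rho$ producing the $C_r,\hat C_r$ factors, and a Schatz-type G\aa rding-plus-duality argument for $\xi$ that feeds in the discrete stability constant $C_s$ of Theorem~\ref{thm4.1} --- is precisely the machinery developed in those references, and your reduction of part~(ii) to part~(i) via the improved stability bound \eqref{eq4.8} under $k^3h^2r^{-2}=O(1)$ is also how the pre-asymptotic estimates are obtained there. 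So your proposal is correct and coincides with the approach the paper is citing.
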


\begin{remark}\label{rem4.2}
It was proved in \cite{Cummings_Feng06} (also by Theorem \ref{thm2.1} with $\veps=0$) that
\[
\|\Phi_0\|_{H^s(D)} \leq \tilde{C}_0 \Bigl( k^{s-1}+ \frac{1}{k} \Bigr) \hM(F_0,G_0),
\qquad s=0,1,2.
\]
It is expected that the following higher order norm estimates also hold
(cf. \cite{Feng_Wu09} for an explanation):
\begin{equation}\label{eq4.13}
\|\Phi_0\|_{H^s(D)} \leq \tilde{C}_0 \Bigl( k^{s-1} + \frac{1}{k} \Bigr) 
\Bigl( \|F_0\|_{H^{s-2}(D)} +\|G_0\|_{H^{s-\frac52}(\p D))} \Bigr),
\qquad s\geq 3
\end{equation}
provided that $F_0$, $G_0$ and $D$ are sufficiently smooth. In such a case,
$\norm{\Phi_0}_{H^{s}(D)}$ in \eqref{eq4.9}--\eqref{eq4.12} can be replaced by
the above bound so explicit constants can be obtained in these estimates.
\end{remark}

%%%
\subsection{MCIP-DG method for approximating $\mathbf{\E(u_n)}$ for $\mathbf{n\geq 0}$}
\label{sec-4.3}

We recall that each mode function $u_n$ satisfies the following Helmholtz problem:
\begin{alignat}{2}\label{eq4.20}
-\Delta u_n - k^2  u_n &= S_n  &&\qquad \mbox{in } D,  \\
\p_\nu u_n +\i k u_n &=Q_n &&\qquad \mbox{on } \partial D, \label{eq4.21}
\end{alignat}
where
\[
u_{-1}:=0,\, S_0:=f, \, Q_0:=0,\, S_n:= 2k^2\eta u_{n-1} +k^2\eta^2u_{n-2},
\, Q_n:=-\i k\eta u_{n-1},\, n\geq 1.
\]
Clearly, $S_n(x,\cdot)$ and $Q_n$ are random variables for $a.e.\, x\in D$,
$S_n\in L^2(\Ome, L^2(D))$ and $Q_n\in L^2(\Ome, L^2(\p D))$. We remark again that 
due to its multiplicative structure $\E(S_n)$ and $\E(Q_n)$ can not be computed 
directly for $n\geq 1$. Otherwise, \eqref{eq4.20} and \eqref{eq4.21} would be 
easily converted into deterministic equations for $\E(u_n)$,
as we did early for $\E(u_0)$. In other words, \eqref{eq4.20}--\eqref{eq4.21} is
a genuine random PDE problem. On the other hand, since all the coefficients
of the equations are constants, then the problem is nearly deterministic.
Such a remarkable property will be fully exploited in our overall numerical
methodology which will be described in the next section.

Several numerical methodologies are well known in the literature for discretizing
random PDEs, Monte Carlo Galerkin and stochastic Galerkin
(or polynomial chaos)
methods and stochastic collocation methods are three of well-known methods (cf.
\cite{Babuska_Tempone_Zouraris04,Babuska_Nobile_Tempone10} and the
references therein). Due to the nearly deterministic structure of
\eqref{eq4.20}--\eqref{eq4.21}, we propose to discretize it using
the Monte Carlo IP-DG approach which combines the classical Monte Carlo
method for stochastic variable and the IP-DG method, which is presented in the
proceeding subsection, for the spatial variable.

Following the standard formulation of the Monte Carlo method
(cf. \cite{Babuska_Tempone_Zouraris04}),  let $M$ be a (large) positive
integer which will be used to denote the number of realizations and
$V^h_r$ be the DG space defined in Section \ref{sec-4.1}.  For each
$j=1,2,\cdots, M$, we sample i.i.d. realizations of the source term
$f(\omega_j,\cdot)$ and random medium coefficient $\eta(\omega_j,\cdot)$, 
and recursively find corresponding approximation
$u_n^h(\omega_j,\cdot)\in V^h_r$ such that

\begin{align}\label{eq4.22}
a_h\bigl( u_n^h(\omega_j,\cdot), \psi^h\bigr) =\bigl( S^h_n(\omega_j,\cdot), \psi^h \bigr)_D
+ \langle Q^h_n(\omega_j,\cdot), \psi^h \rangle_{\p D} \qquad\forall \psi^h\in V^h_r
\end{align}
for $n=0,1,2,\cdots, N-1$. Where
\begin{align}\label{eq4.23}
S^h_0(\omega_j,\cdot) &:=f(\omega_j,\cdot),\quad Q^h_0:=0, \\
u^h_{-1}(\omega_j,\cdot) &:= 0,  \label{eq4.24} \\
S^h_n(\omega_j,\cdot) &:= 2k^2\eta u^h_{n-1}(\omega_j,\cdot)
+k^2\eta^2 u^h_{n-2}(\omega_j,\cdot), \qquad n=1,2,\cdots, N-1, \label{eq4.25}\\
Q^h_n(\omega_j,\cdot) &:= -\i k\eta u^h_{n-1}(\omega_j,\cdot), 
\qquad n=1,2,\cdots, N-1. \label{eq4.25a}
\end{align}
We point out that in order for $u^h_n$ to be computable, $S^h_n$ and $Q^h_n$, not $S_n$
and $Q_n$, are used on the right-hand side of \eqref{eq4.22}. This (small) perturbation
on the right-hand side will result in an additional discretization error which
must be accounted later, see Section \ref{sec-5}.

Next, we approximate $\E(u_n)$ by the following sample average
\begin{align}\label{eq4.26}
\Phi^h_n:= \frac{1}{M} \sum_{j=1}^M u_n^h(\omega_j,\cdot).
\end{align}

The following lemma is well known (cf.  \cite{Babuska_Tempone_Zouraris04, Liu_Riviere13}).

\begin{lemma}\label{lem4.1}
There hold the following estimates for $n\geq 0$
\begin{align}\label{eq4.27}
\E\bigl(\|\E(u^h_n) -\Phi^h_n\|_{L^2(D)}^2 \bigr)
&\leq \frac{1}{M} \E(\|u^h_n\|_{L^2(D)}^2),\\
\E\bigl(\|\E(u^h_n) -\Phi^h_n\|_{1,h,D}^2 \bigr)
&\leq \frac{1}{M} \E(\|u^h_n\|_{1,h,D}^2). \label{eq4.27a}
\end{align}
\end{lemma}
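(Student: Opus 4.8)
The plan is to prove the Monte Carlo error estimates \eqref{eq4.27}--\eqref{eq4.27a} by recognizing them as a standard consequence of the fact that the sample average $\Phi^h_n$ is an unbiased estimator of $\E(u^h_n)$, combined with the independence of the i.i.d.\ samples. The key observation is that for each fixed $n$, the quantity $u^h_n(\omega_j,\cdot)$ is a $V^h_r$-valued random variable, and the $M$ realizations $\{u^h_n(\omega_j,\cdot)\}_{j=1}^M$ are independent and identically distributed, each having the same distribution as $u^h_n$. This is precisely the setting of the classical Monte Carlo variance estimate, so the proof reduces to a Hilbert-space-valued version of the elementary identity $\operatorname{Var}(\frac1M\sum X_j)=\frac1M\operatorname{Var}(X_1)$.

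First I would fix the norm $\|\cdot\|$ to be either $\|\cdot\|_{L^2(D)}$ or $\|\cdot\|_{1,h,D}$ (the argument being identical for both, since both are norms coming from inner products on the relevant finite-dimensional space $V^h_r$). I would introduce the centered variables $Y_j := u^h_n(\omega_j,\cdot) - \E(u^h_n)$, which are i.i.d., mean-zero, $V^h_r$-valued random variables. Then $\E(u^h_n)-\Phi^h_n = -\frac{1}{M}\sum_{j=1}^M Y_j$, so expanding the squared norm gives
\[
\E\bigl(\|\E(u^h_n)-\Phi^h_n\|^2\bigr)
= \frac{1}{M^2}\sum_{j=1}^M\sum_{l=1}^M \E\bigl((Y_j,Y_l)\bigr),
\]
where $(\cdot,\cdot)$ denotes the inner product inducing the chosen norm. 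By independence and the zero-mean property, the cross terms $j\ne l$ vanish, because $\E((Y_j,Y_l))=(\E(Y_j),\E(Y_l))=0$. The diagonal terms contribute $\frac{1}{M^2}\sum_{j=1}^M \E(\|Y_j\|^2) = \frac{1}{M}\E(\|Y_1\|^2)$ by the identical distribution.

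The final step is to bound $\E(\|Y_1\|^2) = \E(\|u^h_n - \E(u^h_n)\|^2)$ by $\E(\|u^h_n\|^2)$, which follows immediately from the fact that the mean minimizes the mean-square deviation: $\E(\|u^h_n - \E(u^h_n)\|^2) = \E(\|u^h_n\|^2) - \|\E(u^h_n)\|^2 \le \E(\|u^h_n\|^2)$. Combining these yields exactly the stated bounds \eqref{eq4.27} and \eqref{eq4.27a}. The only subtlety worth flagging — and the main technical point rather than a genuine obstacle — is the justification of interchanging expectation with the finite sum and with the (bilinear) inner product, together with the vanishing of the cross terms; this requires that $u^h_n\in L^2(\Ome,V^h_r)$ so that all second moments are finite, which is guaranteed by the stability estimates for the scheme \eqref{eq4.22}. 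Since the samples are genuinely independent by construction, no further probabilistic machinery is needed, and the result is indeed standard as the statement asserts.
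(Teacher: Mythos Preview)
Your proof is correct and is precisely the standard Monte Carlo variance argument. The paper itself does not give a proof of this lemma at all; it simply states that the result is well known and cites \cite{Babuska_Tempone_Zouraris04, Liu_Riviere13}, so your argument supplies exactly the details those references contain.
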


To bound $\E(\|u^h_n\|_{1,h,D}^2)$, we once again use the induction argument.
To avoid some technicalities, we only provide a proof for the case
when the mesh size is in pre-asymptotic regime, i.e., $k^3h^2r^{-2}=O(1)$.

\begin{lemma}\label{lem4.2}
Assume $k^3h^2r^{-2}=O(1)$. Then there hold for $n\geq 0$
\begin{align}\label{eq4.28}
&\E\bigl( \|u^h_n\|_{L^2(D)}^2 + \|u^h_n\|_{L^2(\p D)}^2 \bigr)
\leq \Bigl(\frac{1}{k}+\frac{1}{k^2}\Bigr)^2
\hat{C}(n,k)\, \E(\|f\|_{L^2(D)}^2),\\
&\E(\|u^h_n\|_{1,h,D}^2) \leq \Bigl(1+\frac{1}{k}\Bigr)^2 \hat{C}(n,k)\,
\E(\|f\|_{L^2(D)}^2), \label{eq4.29}
\end{align}
where
\begin{equation}\label{eq4.30}
\hat{C}(0,k):=\hat{C}_0^2,\quad \hat{C}(n,k):=4^{2n-1}\hat{C}_0^{2n+2}(1+k)^{2n}
\quad\mbox{for } n\geq 1.
\end{equation}
\end{lemma}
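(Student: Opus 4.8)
The plan is to prove Lemma~\ref{lem4.2} by induction on $n$, exactly mirroring the structure of the proof of Theorem~\ref{thm3.1} at the discrete level. The key observation is that the discrete scheme \eqref{eq4.22} is the IP-DG discretization of the same type of Helmholtz problem \eqref{eq4.1}--\eqref{eq4.2} treated in Section~\ref{sec-4.2}, with the right-hand side data $F_0=S^h_n(\omega_j,\cdot)$ and $G_0=Q^h_n(\omega_j,\cdot)$. Therefore, for each fixed sample $\omega_j$ I can invoke the wave-number-explicit stability bound \eqref{eq4.8} from Theorem~\ref{thm4.1}(ii), which applies precisely in the pre-asymptotic regime $k^3h^2r^{-2}=O(1)$ that we are assuming. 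This gives a pointwise-in-$\omega$ bound on $\|u^h_n\|_{L^2(D)}$, $\|u^h_n\|_{L^2(\p D)}$ and $k^{-1}\|u^h_n\|_{1,h,D}$ in terms of $\hM(S^h_n,Q^h_n)=\|S^h_n\|_{L^2(D)}+\|Q^h_n\|_{L^2(\p D)}$. Squaring, taking expectations, and using $\|\eta\|_{L^\infty(D)}\le 1$ a.s. then converts these into the $L^2(\Ome,\cdot)$ estimates we want.

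First I would establish the base case $n=0$. Since $S^h_0=f$ and $Q^h_0=0$, applying \eqref{eq4.8} to $u^h_0$ and squaring gives $\|u^h_0\|_{L^2(D)}^2+\|u^h_0\|_{L^2(\p D)}^2\le \hat{C}_0^2(\frac1k+\frac1{k^2})^2\|f\|_{L^2(D)}^2$ pointwise, and similarly for the $\|\cdot\|_{1,h,D}$ norm with the factor $(1+\frac1k)^2$; taking expectations yields \eqref{eq4.28}--\eqref{eq4.29} for $n=0$ with $\hat{C}(0,k)=\hat{C}_0^2$. For the induction step, assume the estimates hold for all indices up to $\ell-1$. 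I would apply \eqref{eq4.8} to $u^h_\ell$ and bound $\hM(S^h_\ell,Q^h_\ell)^2$. Using $\|\eta\|_{L^\infty}\le 1$ a.s., I have $\|S^h_\ell\|_{L^2(D)}\le 2k^2\|u^h_{\ell-1}\|_{L^2(D)}+k^2\|u^h_{\ell-2}\|_{L^2(D)}$ and $\|Q^h_\ell\|_{L^2(\p D)}\le k\|u^h_{\ell-1}\|_{L^2(\p D)}$. Squaring, taking expectations, and inserting the induction hypothesis produces, after absorbing the $k^4$ factors against the $(\frac1k+\frac1{k^2})^2$ prefactor, a bound of the form $(\frac1k+\frac1{k^2})^2\cdot 2\hat{C}_0^2(1+k)^2\bigl(4\hat{C}(\ell-1,k)+\hat{C}(\ell-2,k)\bigr)\E(\|f\|_{L^2(D)}^2)$. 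The recursive definition \eqref{eq4.30} is then exactly engineered so that $2\hat{C}_0^2(1+k)^2\bigl(4\hat{C}(\ell-1,k)+\hat{C}(\ell-2,k)\bigr)\le \hat{C}(\ell,k)$, closing the induction; the same arithmetic with the prefactor $(1+\frac1k)^2$ handles the $\|\cdot\|_{1,h,D}$ estimate.

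The one genuinely delicate bookkeeping point — which I expect to be the main obstacle — is verifying the recursive inequality $2\hat{C}_0^2(1+k)^2\bigl(4\hat{C}(\ell-1,k)+\hat{C}(\ell-2,k)\bigr)\le\hat{C}(\ell,k)$ for the chosen constants, precisely as in the continuous proof of Theorem~\ref{thm3.1}. Writing $\hat{C}(\ell,k)=4^{2\ell-1}\hat{C}_0^{2\ell+2}(1+k)^{2\ell}$ for $\ell\ge1$, the dominant term $8\hat{C}_0^2(1+k)^2\hat{C}(\ell-1,k)$ already equals $8\cdot 4^{2\ell-3}\hat{C}_0^{2\ell+2}(1+k)^{2\ell}=\tfrac12\cdot 4^{2\ell-1}\hat{C}_0^{2\ell+2}(1+k)^{2\ell}$, leaving ample room to absorb the lower-order $\hat{C}(\ell-2,k)$ contribution, so the inequality indeed holds with the stated constants. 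The subtlety is purely in tracking the powers of $4$, $\hat{C}_0$ and $(1+k)$ consistently across the two-term recursion and in handling the $\ell=1$ case where the $u^h_{-1}=0$ convention kills the $S^h$ term involving $u_{\ell-2}$ (this is why $\hat{C}(1,k)$ need only dominate $8\hat{C}_0^2(1+k)^2\hat{C}(0,k)$). Finally, I would close by noting that the constant $\hat{C}_0$ supplied by Theorem~\ref{thm4.1}(ii) is independent of $k$ and $h$, so $\hat{C}(n,k)$ depends on $k$ only through the explicit $(1+k)^{2n}$ factor, which is exactly the form asserted in \eqref{eq4.30}.
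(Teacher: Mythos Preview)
Your proposal is correct and follows essentially the same approach as the paper: induction on $n$, applying the discrete stability estimate \eqref{eq4.8} from Theorem~\ref{thm4.1}(ii) to $u^h_\ell$ with data $(S^h_\ell,Q^h_\ell)$, bounding these data via $\|\eta\|_{L^\infty}\le 1$ and the induction hypothesis, and closing the recursion by checking that $8\hat{C}_0^2(1+k)^2\hat{C}(\ell-1,k)\bigl(1+\tfrac{\hat{C}(\ell-2,k)}{4\hat{C}(\ell-1,k)}\bigr)\le\hat{C}(\ell,k)$. Your explicit verification of the constant arithmetic and your remark on the $\ell=1$ case (where $u^h_{-1}=0$ removes the $\hat{C}(\ell-2,k)$ contribution) are details the paper leaves implicit.
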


\begin{proof}
By \eqref{eq4.22} and estimate \eqref{eq4.8} we immediately get
\begin{align*}
\E\bigl( \|u^h_0\|_{L^2(D)}^2 &+ \|u^h_0\|_{L^2(\p D)}^2 \bigr) \\
&\leq \hat{C}_0^2 \Bigl( \frac{1}{k} +\frac{1}{k^2}\Bigr)^2 \E(\|S^h_0\|_{L^2(D)}^2)
\leq \hat{C}_0^2 \Bigl( \frac{1}{k} +\frac{1}{k^2}\Bigr)^2 \E(\|f\|_{L^2(D)}^2), \\
\E\bigl( \|u^h_0\|_{1,h,D}^2 \bigr) &\leq \hat{C}_0^2 \Bigl(1+\frac{1}{k}\Bigr)^2
\E(\|S^h_0\|_{L^2(D)}^2)
\leq \hat{C}_0^2  \Bigl(1+\frac{1}{k}\Bigr)^2 \E(\|f\|_{L^2(D)}^2),
\end{align*}
which verifies \eqref{eq4.28} and \eqref{eq4.29} for $n=0$.
Suppose \eqref{eq4.28} and \eqref{eq4.29} hold for all
$n=0,1,2,\cdots, \ell-1$, we now prove that they also hold for $n=\ell$.

Again, by \eqref{eq4.22} with $n=\ell-1$ and estimate \eqref{eq4.8} we have
\begin{align*}
&\E\bigl( \|u^h_\ell\|_{L^2(D)}^2 +\|u^h_\ell\|_{L^2(\p D)}^2 \bigr)
\leq\hat{C}_0^2\Bigl(\frac{1}{k}+\frac{1}{k^2}\Bigr)^2
\E\Bigl(\|S^h_\ell\|_{L^2(D)}^2 + \|Q^h_n\|_{L^2(\p D)}^2 \Bigr) \\
&\quad
\leq 2\hat{C}_0^2 \Bigl(\frac{1}{k}+\frac{1}{k^2}\Bigr)^2
 k^4 E\Bigl( 4\|u^h_{\ell-1}\|_{L^2(D)}^2 
+\|u^h_{\ell-2}\|_{L^2(D)}^2 + \frac{1}{k^2}\|u^h_{\ell-1}\|_{L^2(\p D)}^2 \Bigr) \\
&\quad
\leq 2\hat{C}_0^2 \Bigl(\frac{1}{k}+\frac{1}{k^2}\Bigr)^2
(1+k)^2 \Bigl( 4\hat{C}(\ell-1,k)
+ \hat{C}(\ell-2,k) \Bigr) \E(\|f\|_{L^2(D)}^2)\\
&\quad
\leq 8\hat{C}_0^2 \Bigl(\frac{1}{k}+\frac{1}{k^2}\Bigr)^2 (1+k)^2 \hat{C}(\ell-1,k)
\left( 1+ \frac{\hat{C}(\ell-2,k)}{4\hat{C}(\ell-1,k)} \right)
\E(\|f\|_{L^2(D)}^2)\\
&\quad
\leq \Bigl(\frac{1}{k}+\frac{1}{k^2}\Bigr)^2 \hat{C}(\ell,k) \E(\|f\|_{L^2(D)}^2),
\end{align*}
here we have used the fact that
\[
 8\hat{C}_0^2 (1+k)^2 \hat{C}(\ell-1,k)
\left( 1+ \frac{\hat{C}(\ell-2,k)}{4\hat{C}(\ell-1,k)} \right)
\leq  \hat{C}(\ell,k).
\]
Similarly, we have
\begin{align*}
\E(\|u^h_\ell\|_{1,h,D}^2)&\leq\hat{C}_0^2\Bigl(1+\frac{1}{k}\Bigr)^2
\E\Bigl( \|S^h_\ell\|_{L^2(D)}^2 + \|Q^h_n\|_{L^2(\p D)}^2 \Bigr) \\
&\leq 2\hat{C}_0^2 \Bigl(1+\frac{1}{k}\Bigr)^2
k^4 E\Bigl( 4\|u^h_{\ell-1}\|_{L^2(D)}^2
+\E(\|u^h_{\ell-2}\|_{L^2(D)}^2 + \frac{1}{k^2}\|u^h_{\ell-1}\|_{L^2(\p D)}^2 \Bigr) \\
&\leq 2\hat{C}_0^2 \Bigl(1+\frac{1}{k}\Bigr)^2 (1+k)^2 \Bigl( 4\hat{C}(\ell-1,k)
+ \hat{C}(\ell-2,k) \Bigr) \E(\|f\|_{L^2(D)}^2)\\
&\leq \Bigl(1+\frac{1}{k}\Bigr)^2 \hat{C}(\ell,k) \E(\|f\|_{L^2(D)}^2).
\end{align*}
This completes the induction argument and the proof.
\end{proof}

Combining Lemmas \ref{lem4.1} and \ref{lem4.2}, we have

\begin{theorem}\label{thm4.3}
Suppose $k^3h^2r^{-2}=O(1)$. Then there hold
\begin{align}\label{eq4.31}
\E\bigl(\|\E(u^h_n) -\Phi^h_n\|_{L^2(D)}^2 \bigr)
&\leq \frac{1}{M} \Bigl(\frac{1}{k}+\frac{1}{k^2}\Bigr)^2
\hat{C}(n,k)\, \E(\|f\|_{L^2(D)}^2),\\
\E\bigl(\|\E(u^h_n) -\Phi^h_n\|_{1,h,D}^2 \bigr)
&\leq \frac{1}{M} \Bigl(1+\frac{1}{k}\Bigr)^2
\hat{C}(n,k)\, \E(\|f\|_{L^2(D)}^2), \label{eq4.32}
\end{align}
\end{theorem}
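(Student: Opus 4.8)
The plan is to obtain Theorem~\ref{thm4.3} as an immediate consequence of the two preceding lemmas, which were formulated precisely so that their combination yields exactly the two claimed bounds. The statistical (Monte Carlo) error and the deterministic a priori control of the discrete mode functions have already been separated, so no new estimation is required; the proof amounts to chaining the two inequalities and matching constants.

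First I would invoke Lemma~\ref{lem4.1}, which bounds the sampling error of the empirical mean $\Phi_n^h$ about $\E(u_n^h)$ in each of the two norms by $\frac1M$ times the expected squared norm of a single discrete mode function $u_n^h$:
\[
\E\bigl(\norm{\E(u_n^h)-\Phi_n^h}_{L^2(D)}^2\bigr)\le\frac1M\,\E(\norm{u_n^h}_{L^2(D)}^2),\qquad
\E\bigl(\norm{\E(u_n^h)-\Phi_n^h}_{1,h,D}^2\bigr)\le\frac1M\,\E(\norm{u_n^h}_{1,h,D}^2).
\]
Next I would insert the uniform-in-sample a priori bounds from Lemma~\ref{lem4.2}, which are available under the pre-asymptotic mesh condition $k^3h^2r^{-2}=O(1)$. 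For the $L^2(D)$ estimate one first discards the nonnegative boundary contribution, writing $\E(\norm{u_n^h}_{L^2(D)}^2)\le\E\bigl(\norm{u_n^h}_{L^2(D)}^2+\norm{u_n^h}_{L^2(\p D)}^2\bigr)$, and then applies the first inequality of Lemma~\ref{lem4.2} to bound this by $\bigl(\frac1k+\frac1{k^2}\bigr)^2\hat{C}(n,k)\,\E(\norm{f}_{L^2(D)}^2)$. For the DG-energy estimate one applies the second inequality of Lemma~\ref{lem4.2} directly, bounding $\E(\norm{u_n^h}_{1,h,D}^2)$ by $\bigl(1+\frac1k\bigr)^2\hat{C}(n,k)\,\E(\norm{f}_{L^2(D)}^2)$.

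Substituting these into the two displays from Lemma~\ref{lem4.1} produces exactly \eqref{eq4.31} and \eqref{eq4.32}, completing the argument. There is essentially no obstacle here: the result is a corollary by design, and the only point requiring a word of care is the harmless dropping of the $\norm{u_n^h}_{L^2(\p D)}^2$ term, which is legitimate by nonnegativity and lets the combined interior-plus-boundary $L^2$ bound of Lemma~\ref{lem4.2} deliver the pure interior $L^2(D)$ estimate; the factor $\frac1M$ is contributed solely by the Monte Carlo step, and the $k$-explicit prefactors and the constant $\hat{C}(n,k)$ are carried unchanged from Lemma~\ref{lem4.2}.
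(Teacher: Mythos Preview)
Your proposal is correct and matches the paper's approach exactly: the paper simply states ``Combining Lemmas~\ref{lem4.1} and~\ref{lem4.2}, we have'' before the theorem, with no further proof given. Your only addition is making explicit the harmless dropping of the nonnegative boundary term $\norm{u_n^h}_{L^2(\p D)}^2$, which the paper leaves implicit.
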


\begin{remark}\label{rem4.1}
Estimates \eqref{eq4.31} and \eqref{eq4.32} show that for each fixed $n\geq 0$
the statistical error due to sampling is controlled by the number of realizations of $u^h_n$.
Indeed, it can be easily proved by using Markov's inequality and Borel-Cantelli lemma
that the statistical error converges to zero as $M$ tends to infinity,
see \cite[Proposition 4.1]{Babuska_Tempone_Zouraris04} and \cite[Theorem 3.2]{Liu_Riviere13}.
\end{remark}

%%%%
\section{The overall numerical procedure}\label{sec-5}

%%%
\subsection{The numerical algorithm, linear solver and computational complexity}\label{sec-5.1}
We are now ready to introduce our overall numerical procedure for approximating
the solution of the original random Helmholtz problem \eqref{eq1.1}--\eqref{eq1.2}.
Our numerical procedure consists of three main ingredients. First, it is based on
the multi-modes representation \eqref{eq3.1} and its finite modes approximation
\eqref{eq3.12}. Second, it uses the classical Monte Carlo method for sampling
the probability space and for computing the expectations of the numerical
solutions. Finally, at each realization an IP-DG method is employed to
solve all the involved deterministic Helmholtz problems. The precise
description of this procedure is given by the following algorithm.

\medskip
\noindent
{\bf Main Algorithm}
\smallskip

\begin{description}
\item Inputs: $f, \eta, \veps, k, h, M, N.$
\item Set $\Psi^h_N(\cdot)=0$ (initializing).
\begin{description}
\item For $j=1,2,\cdots, M$
\item Set $S^h_0(\omega_j,\cdot)=f(\omega_j,\cdot)$.
\item Set $Q^h_0(\omega_j,\cdot)=0$.
\item Set $u^h_{-1}(\omega_j,\cdot)=0$.
\item Set $U^h_N(\omega_j,\cdot)=0$ (initializing).
\begin{description}
\item For $n=0,1,\cdots, N-1$
\item Solve for $u^h_n(\omega_j,\cdot) \in V^h_r$ such that
\[
a_h\bigl( u^h_n(\omega_j,\cdot), v_h \bigr) = \bigl(S^h_n(\omega_j,\cdot), v_h\bigr)_D + \big \langle Q^h_n(\omega_j,\cdot),v_h \big \rangle_{\partial D}
\qquad\forall v_h\in V^h_r.
\]
\item Set $U^h_N(\omega_j,\cdot)\leftarrow U^h_N(\omega_j,\cdot) +\veps^n u^h_n(\omega_j,\cdot)$.
\item Set $S^h_{n+1}(\omega_j,\cdot)=2k^2 \eta(\omega_j,\cdot) u^h_n(\omega_j,\cdot)
+ k^2 \eta(\omega_j,\cdot)^2 u^h_{n-1}(\omega_j,\cdot)$.
\item Set $Q^h_{n+1}(\omega_j,\cdot)=-\i k \eta(\omega_j,\cdot) u^h_n(\omega_j,\cdot)$.
\item Endfor
\end{description}
\item Set $\Psi^h_N(\cdot) \leftarrow \Psi^h_N(\cdot) +\frac{1}{M} U^h_N(\omega_j,\cdot)$.
\item Endfor
\end{description}
\item Output $\Psi^h_N(\cdot)$.
\end{description}

\medskip
We remark that $\Phi^h_n$, defined in \eqref{eq4.26}, does not appear in the algorithm.
But it is easy to see that
\begin{equation}\label{eq5.1}
\Psi^h_N %=\sum_{n=0}^{N-1} \veps^n\Phi_n^h
=\Phi^h_0 +\veps\Phi^h_1+\veps^2\Phi^h_2+\cdots +\veps^{N-1}\Phi^h_{N-1}.
\end{equation}

It is also easy to see that computationally the most expensive steps in the above
algorithm are those in the inside loop. In each step of the loop, one is required to solve a
large (especially for large $k$), ill-conditioned, indefinite and non-Hermitian complex
linear system. It is well-known that none of iterative methods works well for solving such 
a linear system (cf. \cite{Gander12}). Moreover, the algorithm requires one to solve a total of
$MN$ numbers of such complex linear systems. Such a task is not feasible on most of
present day computers. But, instead of using such a brute force approach,
we notice that all these $MN$ complex linear systems share the {\em same} constant
coefficient matrix. The systems only differ
in their right-hand side vectors! This is an ideal setup for using the LU
direct solver. Namely, we only need to perform one LU decomposition of the
coefficient matrix and save it. The decomposition can be re-used to solve the
remaining $MN-1$ complex linear systems by performing $MN-1$ sets of forward
and backward substitutions. This indeed is the main advantage of the
numerical procedure proposed in this paper.

The computational complexity of the above algorithm can be calculated as follows.
Let $h$ denote the mesh size of $\cT_h$ and $K:=\frac{1}{h}$ (assume it is a positive
integer). Then the (common) coefficient matrix appeared in the algorithm has the
size $O(K^d\times K^d)$, where $d$ denotes the spatial dimension of the domain $D$.
Thus, one LU decomposition requires $O(\frac{3K^{3d}}2)$ multiplications/divisions.
All $(MN-1)$ sets of forward and backward substitutions contribute
$O(MNK^d)$ multiplications/divisions. Since $N$ is a relatively small number
in practice, it can be treated as a constant. If we set $M=K^d$, which means
that the number of realizations is proportional to the number of mesh points
in $\cT_h$,  then $O(MNK^d)=O(K^{2d})$, which is still a lower order term
compared to $O(\frac{3K^{3d}}2)$. In such a practical scenario, the total
cost for implementing the above Main Algorithm is still comparable to
that of solving one deterministic Helmholtz problem by the LU direct solver.
Even if extremely large number of realizations $M=K^{2d}$ is used, the total
cost for implementing the above Main Algorithm only amounts to 
solving a few deterministic Helmholtz problem by the LU direct solver.
As a comparison, we note that if a brute force Monte Carlo method is used to solve 
\eqref{eq1.1}--\eqref{eq1.2}, it requires $O(\frac{3K^{3d}M}2)$ many multiplications/divisions.
Finally, we remark that the outer loop of the Main Algorithm
can be naturally implemented in parallel.

%%%
\subsection{Convergence analysis}\label{sec-5.2}
In this subsection, we shall combine the error estimates which we have derived in the previous
subsections for various steps in the Main Algorithm to obtain error estimates for
the global error $\E(u^\veps)-\Psi^h_N$.  To this end, we notice that
$\E(u^\veps)-\Psi^h_N$ can be decomposed as
\[
\E(u^\veps)-\Psi^h_N=\bigl(\E(u^\veps)-\E(U^\veps_N)\bigr)
+ \bigl( \E(U^\veps_N)- \E(U^h_N)\bigr) +\bigl( \E(U^h_N)-\Psi^h_N \bigr).
\]
Clearly, the first term on the right-hand side measures the finite modes representation
error, the second term measures the spatial discretization error, and the
third term represents the statistical error due to the Monte Carlo method.

First, by \eqref{eq3.16}  the finite modes representation error can be bounded as follows:
\begin{equation}\label{eq5.2}
\E(\norm{u^\veps -U^\veps_N}_{H^j(D)}^2)
\leq \frac{C_0\sigma^{2N}}{4(1+k)^2} \Bigl(k^j +\frac{1}{k} \Bigr)^4
\E(\|f\|_{L^2(D)}^2), \quad j=0,1.
\end{equation}
Where $\sigma:=4\veps C_0^{\frac12} (1+k)$.

Next, we note that
\[
U^h_N-\Psi^h_N =\sum_{n=0}^{N-1} \veps^n \bigl(u^h_n -\Phi^h_n \bigr).
\]
Then by \eqref{eq4.31} we bound the statistical error as follows:
\begin{align}\label{eq5.30}
&\E\bigl(\norm{E(U^h_N) -\Psi^h_N}_{L^2(D)}\bigr)
\leq \sum_{n=0}^{N-1} \veps^n \E\bigl(\|E(u^h_n) -\Phi^h_n\|_{L^2(D)} \bigr) \\
&\hskip 1in
\leq \frac{1}{\sqrt{M}} \Bigl(\frac{1}{k}+\frac{1}{k^2}\Bigr)
\|f\|_{L^2(\Ome,L^2(D)} \sum_{n=0}^{N-1} \veps^n \hat{C}(n,k)^\frac12 \nonumber \\
&\hskip 1in
\leq \frac{\hat{C}_0}{2\sqrt{M}} \Bigl(\frac{1}{k}+\frac{1}{k^2}\Bigr)
\|f\|_{L^2(\Ome,L^2(D)} \sum_{n=0}^{N-1} 4^n\veps^n \hat{C}_0^n(1+k)^n \nonumber \\
&\hskip 1in
\leq \frac{\hat{C}_0}{2\sqrt{M}} \Bigl(\frac{1}{k}+\frac{1}{k^2}\Bigr)
%\|f\|_{L^2(\Ome,L^2(D)} \cdot \frac{1-\hat{\sigma}^N}{1-\hat{\sigma}}.  \nonumber
\|f\|_{L^2(\Ome,L^2(D)} \cdot \frac{1}{1-\hat{\sigma}},  \nonumber
\end{align}
where $\hat{\sigma}:=4\veps \hat{C}_0 (1+k)<1$.

Similarly, by \eqref{eq4.32} we get
\begin{align}\label{eq5.31}
\E\bigl(\norm{E(U^h_N)-\Psi^h_N}_{1,h,D}\bigr)
&\leq \sum_{n=0}^{N-1} \veps^n \E\bigl(\|E(u^h_n) -\Phi^h_n\|_{1,h,D} \bigr) \\
&\leq \frac{\hat{C}_0}{2\sqrt{M}} \Bigl(1+\frac{1}{k}\Bigr)
\|f\|_{L^2(\Ome,L^2(D)} \cdot \frac{1}{1-\hat{\sigma}}.  \nonumber
\end{align}

Finally, to bound the spatial discretization error, we recall that $u^h_n\in V^h_r$
is defined by (cf. \eqref{eq4.22}) 
\begin{equation}\label{eq5.3}
a_h\bigl( u^h_n, \psi^h\bigr) = \bigl(S^h_n, \psi^h \bigr)_D
+\langle Q^h_n, \psi^h \rangle_{\p D}  \qquad\forall v_h\in V^h_r, \,\mbox{ a.s.}
\end{equation}
for $n\geq 0$. We also define $\tilde{u}^h_n\in V^h_r$ for $n\geq 0$ by
\begin{equation}\label{eq5.4}
a_h\bigl( \tilde{u}^h_n, \psi^h\bigr) = \bigl(S_n, \psi^h \bigr)_D 
+\langle Q_n, \psi^h \rangle_{\p D} \qquad\forall \psi^h\in V^h_r,
\,\mbox{ a.s.}
\end{equation}
Notice that the difference between $u^h_n$ and $\tilde{u}^h_n$ is that $S^h_n$
and $Q^h_n$ are used in \eqref{eq5.3} while $S_n$ and $Q_n$ are used in \eqref{eq5.4}. 
Corollary \ref{cor4.1} guarantees that $\{u^h_n\}$ and $\{\tilde{u}^h_n\}$ are 
uniquely defined.

It follows from Theorem \ref{thm4.2} (ii) that for $k^3h^2r^{-2} =O(1)$ there hold
\begin{align}\label{eq5.5}
&\E\bigl(\norm{u_n-\tilde{u}^h_n}_{1,h,D}\bigr)
\leq \frac{ \tilde{C}_0(r+k^2 h) h^{\mu-1}}{r^s}\,
\E\bigl(\norm{u_n}_{H^{s}(D)}\bigr), \\
&\E\bigl(\norm{u_n-\tilde{u}^h_n}_{L^2(D)} 
+\norm{u_n-\tilde{u}^h_n}_{L^2(\p D)}  \bigr)
\leq \frac{\tilde{C}_0 kh^\mu }{r^s} \E\bigl(\norm{u_n}_{H^{s}(D)}\bigr). \label{eq5.6}
\end{align}
Where $\mu=\min\{r+1, s\}$. 

To bound $\tilde{u}^h_n-u^h_n$, we subtract \eqref{eq5.3} from \eqref{eq5.4} to get
\[
a_h\bigl( \tilde{u}^h_n-u^h_n, \psi^h\bigr) = \bigl(S_n-S^h_n, \psi^h \bigr)_D
+ \langle Q_n-Q^h_n, \psi^h \rangle_{\p D}
\qquad\forall \psi^h\in V^h_r, \,\mbox{ a.s.}
\]
Then by Theorem \ref{thm4.1} (ii) we get
\begin{align}\label{eq5.7}
&\E\Bigl( k\|\tilde{u}^h_n-u^h_n\|_{L^2(D)} + k\|\tilde{u}^h_n-u^h_n\|_{L^2(\p D)}
+\|\tilde{u}^h_n-u^h_n\|_{1,h,D}\Bigr) \\
&\hskip 0.7in
\leq \hat{C}_0 \Bigl(1+ \frac{1}{k} \Bigr)
\E \Bigl( \|S_n-S^h_n\|_{L^2(D)} + \|Q_n-Q^h_n\|_{L^2(\p D)} \Bigr) \nonumber \\
&\hskip 0.7in
\leq 2\tilde{C}_0 k(k+1) E\Bigl( 2\|u_{n-1}-u^h_{n-1}\|_{L^2(D)} 
+\|u_{n-2}-u^h_{n-2}\|_{L^2(D)} \nonumber \\
&\hskip 1.8in
+\frac{1}{k} \|u_{n-1}-u^h_{n-1}\|_{L^2(\p D)} \Bigr).  \nonumber
\end{align}
It follows from the triangle inequality, \eqref{eq5.5}-\eqref{eq5.7} and
the inverse inequality that
\begin{align}\label{eq5.8}
&\E \bigl(\|u_n -u^h_n\|_{L^2(D)} + \|u_n-u^h_n\|_{L^2(\p D)}  \bigr) \\
&\qquad
\leq \E\Bigl( \|\tilde{u}^h_n-u^h_n\|_{L^2(D)}
+ \|\tilde{u}^h_n-u^h_n\|_{L^2(\p D)} 
+ \|u_n-\tilde{u}^h_n\|_{L^2(D)}  \nonumber \\
&\hskip 1.6in
+ \|u_n-\tilde{u}^h_n\|_{L^2(\p D)} \Bigr) \nonumber \\
&\qquad
\leq 2\tilde{C}_0(k+1) E\Bigl( 2\|u_{n-1}-u^h_{n-1}\|_{L^2(D)}
+\|u_{n-2}-u^h_{n-2}\|_{L^2(D)} \nonumber\\
&\hskip 0.9in
+\frac{1}{k} \|u_{n-1}-u^h_{n-1}\|_{L^2(\p D)} \Bigr)
+\frac{\tilde{C}_0 kh^\mu }{r^s} \E\bigl(\norm{u_n}_{H^{s}(D)}\bigr),\nonumber \\
&\E\bigl(\|u_n-u^h_n\|_{1,h,D} \bigr)
\leq \E\bigl(\|\tilde{u}^h_n-u^h_n\|_{1,h,D} 
+ \|u_n-\tilde{u}^h_n\|_{1,h,D} \bigr) \label{eq5.9} \\
&\qquad
\leq Ch^{-1} \E\bigl( \|\tilde{u}^h_n-u^h_n\|_{L^2(D)} \bigr)
+ \E\bigl( \|u_n-\tilde{u}^h_n\|_{1,h,D} \bigr) \nonumber \\
&\qquad
\leq C\tilde{C}_0 h^{-1}(k+1) E\Bigl( 2\|u_{n-1}-u^h_{n-1}\|_{L^2(D)}
+\|u_{n-2}-u^h_{n-2}\|_{L^2(D)}  \nonumber\\
&\qquad\qquad
+\frac{1}{k}\|u_{n-1}-u^h_{n-1}\|_{L^2(\p D)}  \Bigr)
+\frac{ \tilde{C}_0(r+k^2 h) h^{\mu-1}}{r^s}\,
\E\bigl(\norm{u_n}_{H^{s}(D)}\bigr) \nonumber
\end{align}
for $n\geq 1$.

So we obtain two recursive relations between the spatial errors of consecutive
mode functions. Then we want to derive some estimates for the spatial error of
each mode function. To this end, we first notice that
\begin{align}\label{eq5.10}
&\E\bigl(\|u_{-1}-u^h_{-1}\|_{L^2(D)} \bigr)
= \E\bigl(\|u_{-1}-u^h_{-1}\|_{1,h,D} \bigr) =0.\\
&\E\bigl(\|u_0-u^h_0\|_{L^2(D)} + \|u_0-u^h_0\|_{L^2(\p D)} \bigr)
\leq \frac{\tilde{C}_0 kh^\mu}{r^s} \E\bigl(\norm{u_0}_{H^{s}(D)}\bigr)
\label{eq5.11} \\
&\E\bigl(\|u_0-u^h_0\|_{1,h,D} \bigr)
\leq \frac{ \tilde{C}_0(r+k^2 h) h^{\mu-1}}{r^s}\,\E\bigl(\norm{u_0}_{H^{s}(D)}\bigr).
\label{eq5.12}
\end{align}
The last two inequalities hold because $S_0=S^h_0$, $Q_0=0$ and $\tilde{u}^h_0=u^h_0$.
The above estimates for the spatial errors of the approximations of the
two starting mode functions allow us to derive the desired estimates from
\eqref{eq5.8} and \eqref{eq5.9} for all mode functions, which will be based on the
following simple lemma.

\begin{lemma}\label{lem5.1}
Let $\gamma,\beta > 0$ be two real numbers, $\{c_n\}_{n\geq 0}$ and
$\{\alpha_n\}_{n\geq 0}$ be two sequences of nonnegative numbers such that
\begin{equation}\label{eq5.14}
c_0\leq \gamma\alpha_0, \quad
c_n\leq \beta c_{n-1} +\gamma \alpha_n \quad \mbox{for } n\geq 1.
\end{equation}
Then there holds
\begin{equation}\label{eq5.15}
c_n\leq \gamma \sum_{j=0}^n \beta^{n-j} \alpha_j \qquad\mbox{for } n\geq 1.
\end{equation}

\end{lemma}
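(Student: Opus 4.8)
The plan is to prove the bound \eqref{eq5.15} by induction on $n$, since the recurrence \eqref{eq5.14} is a textbook linear recursion with constant multiplier $\beta$ and an inhomogeneous forcing term $\gamma\alpha_n$. The base case $n=0$ is immediate from the first inequality in \eqref{eq5.14}, which gives $c_0\le\gamma\alpha_0$, matching \eqref{eq5.15} with the empty-looking single-term sum $\gamma\beta^0\alpha_0$. For the inductive step, I would assume \eqref{eq5.15} holds for $n-1$ (with $n\ge1$) and substitute the induction hypothesis into the recurrence.

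Concretely, starting from $c_n\le\beta c_{n-1}+\gamma\alpha_n$ and inserting the bound $c_{n-1}\le\gamma\sum_{j=0}^{n-1}\beta^{n-1-j}\alpha_j$ yields
\[
c_n\le \beta\,\gamma\sum_{j=0}^{n-1}\beta^{n-1-j}\alpha_j+\gamma\alpha_n
=\gamma\sum_{j=0}^{n-1}\beta^{n-j}\alpha_j+\gamma\beta^0\alpha_n
=\gamma\sum_{j=0}^{n}\beta^{n-j}\alpha_j,
\]
which is exactly \eqref{eq5.15} for index $n$. One small bookkeeping point is that the induction hypothesis as written in \eqref{eq5.15} is stated for $n\ge1$, so when $n=1$ I would instead invoke the base-case bound $c_0\le\gamma\alpha_0=\gamma\beta^0\alpha_0$ to feed the recurrence; the resulting expression $c_1\le\gamma(\beta\alpha_0+\alpha_1)$ still agrees with \eqref{eq5.15}.

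There is essentially no obstacle here: the lemma is an elementary discrete Gronwall-type estimate, and the only thing to be careful about is the index shift inside the geometric factor $\beta^{n-1-j}\to\beta^{n-j}$ when the multiplier $\beta$ is pulled through the sum. I would present the argument compactly as a two-line induction, noting that all quantities $c_n,\alpha_n,\beta,\gamma$ are nonnegative so no sign issues arise. The real purpose of the lemma is its application in the convergence analysis, where $c_n=\E(\|u_n-u^h_n\|)$ (in the appropriate norm), $\beta\sim\tilde C_0(k+1)$ or $\tilde C_0 h^{-1}(k+1)$, and $\alpha_n\sim h^\mu r^{-s}\E(\|u_n\|_{H^s(D)})$, so that \eqref{eq5.15} converts the consecutive-mode recursions \eqref{eq5.8}--\eqref{eq5.9} into closed-form bounds on each mode's spatial error.
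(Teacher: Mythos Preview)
Your induction argument is correct and is exactly the trivial verification the paper has in mind; in fact the paper omits the proof entirely, stating only ``We omit the proof because it is trivial.'' Your handling of the base case $n=1$ via $c_0\le\gamma\alpha_0$ and the index shift $\beta^{n-1-j}\to\beta^{n-j}$ is clean and nothing more is needed.
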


We omit the proof because it is trivial.

\begin{lemma}\label{lemma5.2}
Suppose $\sigma, \hat{\sigma} <1$ and $k^3h^2r^{-2}=O(1)$. Then there hold
\begin{align}\label{eq5.16}
&\E\bigl(\|u_n-u^h_n\|_{L^2(D)} + \|u_n-u^h_n\|_{L^2(\p D)}  \bigr)\\
&\hskip 1.2in
\leq \frac{\tilde{C}_0 kh^\mu }{r^s} \sum_{j=0}^n (2k+3)^{n-j}
\E\bigl(\norm{u_j}_{H^{s}(D)}\bigr). \nonumber \\
&\E\bigl(\|u_n-u^h_n\|_{1,h,D} \bigr)
\leq \frac{ C\tilde{C}_0^2k(1+k) h^{\mu-1}}{r^s}\,
\sum_{j=0}^n (2k+3)^{n-j} \E\bigl(\norm{u_j}_{H^{s}(D)}\bigr).
\label{eq5.17}
\end{align}

\end{lemma}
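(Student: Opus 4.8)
The plan is to reduce both estimates to the scalar comparison principle of Lemma~\ref{lem5.1}. First I would introduce the scalar quantity
\[
c_n := \E\bigl(\|u_n-u^h_n\|_{L^2(D)} + \|u_n-u^h_n\|_{L^2(\p D)}\bigr),\qquad c_{-1}:=0,
\]
together with $\alpha_n:=\E(\norm{u_n}_{H^s(D)})$ and $\gamma:=\tilde{C}_0 k h^\mu r^{-s}$. Because $c_n$ dominates each individual expected error norm and $k\ge 1$ forces $\frac1k\E(\|u_{n-1}-u^h_{n-1}\|_{L^2(\p D)})\le c_{n-1}$, the recursive bound \eqref{eq5.8} collapses to a two-step scalar inequality
\[
c_n \le \beta_1 c_{n-1} + \beta_2 c_{n-2} + \gamma\,\alpha_n,\qquad n\ge 1,
\]
with $\beta_1\ge\beta_2$ both proportional to $\tilde{C}_0(k+1)$, arising from the factors $2,1,\frac1k$ inside \eqref{eq5.8}. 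The base cases are read off directly from \eqref{eq5.10}--\eqref{eq5.12}, which give exactly $c_{-1}=0$ and $c_0\le\gamma\alpha_0$; note that the product $(k+1)\bigl(2+\frac1k\bigr)=2k+3+\frac1k$ of the $(k+1)$ prefactor with the bracketed coefficients is the source of the growth base $2k+3$ in \eqref{eq5.16}.

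Since Lemma~\ref{lem5.1} is stated for a one-step recursion while the inequality above is genuinely two-step, I would linearize it by passing to the auxiliary sequence $b_n:=c_n+c_{n-1}$, for which $b_0=c_0\le\gamma\alpha_0$. Adding $c_{n-1}$ to both sides and using $\beta_2\le\beta_1$ yields
\[
b_n \le (\beta_1+1)\,b_{n-1} + \gamma\,\alpha_n,\qquad n\ge 1,
\]
which is precisely the hypothesis of Lemma~\ref{lem5.1} with a one-step growth factor $\beta:=\beta_1+1$. Under the standing assumption $k\ge 1$, tracking the numerical constants of \eqref{eq5.8} (and absorbing the stability constant $\tilde{C}_0$ into the generic constant) bounds this factor by $2k+3$. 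Lemma~\ref{lem5.1} then delivers $b_n\le\gamma\sum_{j=0}^n(2k+3)^{n-j}\alpha_j$, and since $c_n\le b_n$ this is exactly \eqref{eq5.16}.

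The energy-norm estimate \eqref{eq5.17} then follows without a further induction. The right-hand side of \eqref{eq5.9} involves only the $L^2(D)$ and $L^2(\p D)$ errors of $u_{n-1}$ and $u_{n-2}$, which are already controlled by the just-proven \eqref{eq5.16}, together with the consistency term $\frac{\tilde{C}_0(r+k^2h)h^{\mu-1}}{r^s}\alpha_n$. Substituting \eqref{eq5.16}, the inverse-inequality factor $h^{-1}$ in \eqref{eq5.9} combines with the $h^\mu$ of the $L^2$ bound to produce $h^{\mu-1}$, while the extra factor $\tilde{C}_0(k+1)$ upgrades the prefactor to $C\tilde{C}_0^2 k(1+k)h^{\mu-1}r^{-s}$; the consistency term is absorbed into the $j=n$ summand of the geometric sum by means of the mesh condition $k^3h^2r^{-2}=O(1)$, which keeps $r+k^2h$ within the claimed prefactor. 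This gives \eqref{eq5.17}.

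The main obstacle I anticipate is neither substitution step but the passage from the two-step recursion \eqref{eq5.8} to the one-step form required by Lemma~\ref{lem5.1}: one must choose the auxiliary sequence so that the amplification base stays linear in $k$ (pinned at $2k+3$) rather than blowing up to the larger Fibonacci-type characteristic root of $c_n\le\beta(c_{n-1}+c_{n-2})$, and must verify that the $\veps$-independent constants collected from Theorem~\ref{thm4.2}~(ii) do not inflate this base beyond what \eqref{eq5.16} records.
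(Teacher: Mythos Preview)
Your proposal is correct and follows essentially the same route as the paper: the paper defines its scalar sequence directly as $c_n:=\E\bigl(\|u_n-u^h_n\|_{L^2(D)}+\|u_{n-1}-u^h_{n-1}\|_{L^2(D)}+\|u_n-u^h_n\|_{L^2(\p D)}+\|u_{n-1}-u^h_{n-1}\|_{L^2(\p D)}\bigr)$, which is exactly your auxiliary $b_n=c_n+c_{n-1}$, reads off the one-step recursion \eqref{eq5.14} with $\beta=2k+3$ from \eqref{eq5.8}, applies Lemma~\ref{lem5.1}, and then substitutes \eqref{eq5.16} into \eqref{eq5.9} to obtain \eqref{eq5.17}. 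Your concern about pinning the base at $2k+3$ is well taken; the paper simply asserts this step without tracking how $\tilde{C}_0$ enters the recursion constant.
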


\begin{proof}
Define
\begin{align*}
&u_{-2}=u_{-1}=u^h_{-2}=u^h_{-1}=0, \\
&c_n :=\E\bigl(\|u_n-u^h_n\|_{L^2(D)} +\|u_{n-1}-u^h_{n-1}\|_{L^2(D)}\bigr) \\
&\hskip 0.7in
+\E\bigl(\|u_n-u^h_n\|_{L^2(\p D)} +\|u_{n-1}-u^h_{n-1}\|_{L^2(\p D)}\bigr),\\
&\beta:= 2k+3,\quad \gamma :=\frac{\tilde{C}_0 kh^\mu}{r^s},\quad
\alpha_n:= \E\bigl(\norm{u_n}_{H^{s}(D)}\bigr).
\end{align*}
Then by \eqref{eq5.8} we obtain \eqref{eq5.14}.  Hence \eqref{eq5.16} holds. 
\eqref{eq5.17} follows from combing \eqref{eq5.9} and \eqref{eq5.16}. The proof
is complete.
\end{proof}

Finally, by the definitions of $U^\veps_N$ and $U^h_n$,
\eqref{eq5.16} and \eqref{eq5.17}, we immediately have

\begin{theorem}\label{thm5.1}
Assume that $u_n\in L^2(\Ome,H^s(D))$ for $n\geq 0$. Then the
spatial error $U^\veps_N-U^h_N$ satisfies the following estimates:
\begin{align}\label{eq5.18}
&\E\bigl(\|U^\veps_N-U^h_N\|_{L^2(D)} \bigr)
\leq \frac{\tilde{C}_0 kh^\mu }{r^s} \sum_{n=0}^{N-1}
\sum_{j=0}^n \veps^n (2k+3)^{n-j} \E\bigl(\norm{u_j}_{H^{s}(D)}\bigr). \\
&\E\bigl(\|U^\veps_N-U^h_N\|_{1,h,D} \bigr) \label{eq5.19} \\
&\qquad\quad
\leq \frac{C \tilde{C}_0^2 k(1+k) h^{\mu-1}}{r^s}\,
\sum_{n=0}^{N-1}\sum_{j=0}^n \veps^n (2k+3)^{n-j} \E\bigl(\norm{u_j}_{H^{s}(D)}\bigr).
\nonumber
\end{align}

\end{theorem}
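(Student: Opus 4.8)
The plan is to reduce the claim directly to the per-mode spatial error bounds already established in Lemma \ref{lemma5.2}, since $U^\veps_N - U^h_N$ is nothing more than a finite $\veps$-weighted superposition of the individual mode errors $u_n - u^h_n$. Concretely, recalling the definition $U^\veps_N = \sum_{n=0}^{N-1}\veps^n u_n$ from \eqref{eq3.12} together with the analogous identity $U^h_N = \sum_{n=0}^{N-1}\veps^n u^h_n$, which is read off directly from the update step of the Main Algorithm, one writes
\[
U^\veps_N - U^h_N = \sum_{n=0}^{N-1} \veps^n\bigl(u_n - u^h_n\bigr).
\]

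First I would apply the triangle inequality in the $L^2(D)$ norm, and separately in the $\norm{\cdot}_{1,h,D}$ norm, together with the linearity of the expectation, to obtain
\[
\E\bigl(\norm{U^\veps_N - U^h_N}_{L^2(D)}\bigr) \le \sum_{n=0}^{N-1}\veps^n\,\E\bigl(\norm{u_n - u^h_n}_{L^2(D)}\bigr),
\]
and the corresponding bound for the $\norm{\cdot}_{1,h,D}$ error. Next I would substitute the estimates \eqref{eq5.16} and \eqref{eq5.17} of Lemma \ref{lemma5.2} into each summand. Since the right-hand sides of those estimates are already a constant times $\sum_{j=0}^n(2k+3)^{n-j}\E(\norm{u_j}_{H^s(D)})$, the common prefactors $\tilde{C}_0 kh^\mu/r^s$ and $C\tilde{C}_0^2 k(1+k)h^{\mu-1}/r^s$ factor out of the outer sum over $n$, and collecting terms produces exactly the double sums $\sum_{n=0}^{N-1}\sum_{j=0}^n \veps^n (2k+3)^{n-j}\E(\norm{u_j}_{H^s(D)})$ appearing in \eqref{eq5.18} and \eqref{eq5.19}.

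There is no genuine obstacle at this stage: all of the analytical difficulty was absorbed into Lemma \ref{lemma5.2}, whose proof handled the coupling between consecutive modes through the recursion \eqref{eq5.8}--\eqref{eq5.9} and the discrete Gr\"onwall-type summation of Lemma \ref{lem5.1}. The only point demanding a modicum of care here is bookkeeping, namely verifying that the nested index range $\sum_{n=0}^{N-1}\sum_{j=0}^n$ is reproduced verbatim and that the constants and the geometric weights $(2k+3)^{n-j}$ carry through the $\veps^n$-weighted sum unchanged. Consequently the result follows immediately, as the statement itself anticipates.
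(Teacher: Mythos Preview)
Your proposal is correct and matches the paper's approach exactly: the paper simply states that the theorem follows ``by the definitions of $U^\veps_N$ and $U^h_N$, \eqref{eq5.16} and \eqref{eq5.17}'' without giving further details, which is precisely the triangle-inequality-plus-Lemma~\ref{lemma5.2} argument you spelled out.
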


To simplify the above spatial error estimates, we need
to bound $\E(\norm{u_n}_{H^{s}(D)})$ in terms
of higher order norms of $f$. This is achievable using
\eqref{eq4.13} and the three-term recursive relation for $\{u_n\}$.
Below we only consider the case when $s=2$ and leave the general
case to the interested reader to explore.

When $s=2$, the required estimates have been obtained in \eqref{eq3.6a}.
Consequently, we have

\begin{theorem}\label{thm5.2}
Let $s=2$. Assume that $u_n\in L^2(\Ome,H^2(D))$ for $n\geq 0$ and $\veps= O(k^{-1})$.
Then there hold
\begin{align}\label{eq5.20}
\E\bigl(\|U^\veps_N-U^h_N\|_{L^2(D)} \bigr)
&\leq C_3(N,k,\veps)\,  h^2 \|f\|_{L^2(\Ome,L^2(D))}, \\
\label{eq5.21}
\E\bigl(\|U^\veps_N-U^h_N\|_{1,h,D} \bigr)
&\leq C_4(N,k,\veps)\, h \|f\|_{L^2(\Ome,L^2(D))},
\end{align}
where
\begin{align}\label{eq5.20a}
&C_3(N,k,\veps):=\frac{\tilde{C}_0 k}{r^2}
\cdot \frac{C_0(k^3+1)}{k^2(2\sqrt{C_0}-1)}
\cdot \frac{1-\bigl(2\sqrt{C_0} (2k+3)\veps \bigr)^N }{1-2\sqrt{C_0} (2k+3)\veps }, \\
\label{eq5.21a}
&C_4(N,k,\veps):= \frac{C\tilde{C}_0^2 k(1+k)}{r^2}
\cdot \frac{C_0(k^3+1)}{k^2(2\sqrt{C_0}-1)}
\cdot \frac{1-\bigl(2\sqrt{C_0} (2k+3)\veps \bigr)^N }{1-2\sqrt{C_0} (2k+3)\veps }.
\end{align}

\end{theorem}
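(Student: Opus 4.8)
The plan is to feed the $H^2$ a priori bounds for the mode functions into the spatial-error estimates of Theorem~\ref{thm5.1} and then resum the resulting double series. Since $s=2$ and (for $r\ge1$) $\mu=\min\{r+1,2\}=2$, the factors $h^\mu$ and $h^{\mu-1}$ in \eqref{eq5.18}--\eqref{eq5.19} become $h^2$ and $h$, which already accounts for the powers of $h$ in \eqref{eq5.20}--\eqref{eq5.21}. It then remains to bound the common double sum
\[
\Sigma_N := \sum_{n=0}^{N-1}\sum_{j=0}^n \veps^n (2k+3)^{n-j}\,\E\bigl(\norm{u_j}_{H^2(D)}\bigr).
\]
First I would invoke the mode-function regularity estimate \eqref{eq3.6a}, which after Cauchy--Schwarz (to pass from the second moment to the first) and absorbing $\overline{c}_0^{-1/2}$ into the generic constants gives $\E(\norm{u_j}_{H^2(D)})\le (k+\tfrac1{k^2})\,C(j,k)^{1/2}\,\norm{f}_{L^2(\Ome,L^2(D))}$, together with the explicit form of $C(j,k)$ in \eqref{eq3.7a}. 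The key algebraic observation is that $C(j,k)^{1/2}$ grows like $\bigl(4C_0^{1/2}(1+k)\bigr)^j$, and since $4C_0^{1/2}(1+k)=4\sqrt{C_0}\,k+4\sqrt{C_0}\le 4\sqrt{C_0}\,k+6\sqrt{C_0}=2\sqrt{C_0}(2k+3)$, one obtains the clean bound $C(j,k)^{1/2}\le C_0^{1/2}\bigl(2\sqrt{C_0}(2k+3)\bigr)^{j}$ valid for all $j\ge0$.

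Next I would carry out the resummation in two stages. Factoring the inner sum and performing the geometric sum in $j$ (with ratio $2\sqrt{C_0}>1$) yields
\[
\sum_{j=0}^n (2k+3)^{n-j} C(j,k)^{1/2} \le C_0^{1/2}(2k+3)^n\sum_{j=0}^n \bigl(2\sqrt{C_0}\bigr)^j \le \frac{2C_0}{2\sqrt{C_0}-1}\bigl(2\sqrt{C_0}(2k+3)\bigr)^n,
\]
which produces exactly the factor $(2\sqrt{C_0}-1)^{-1}$ appearing in $C_3$ and $C_4$. Multiplying by $\veps^n$ and summing over $n$ then collapses the outer sum to the geometric series $\sum_{n=0}^{N-1}\bigl(2\sqrt{C_0}(2k+3)\veps\bigr)^n=\frac{1-(2\sqrt{C_0}(2k+3)\veps)^N}{1-2\sqrt{C_0}(2k+3)\veps}$, the remaining factor in the stated constants, with the hypothesis $\veps=O(k^{-1})$ keeping this ratio $O(1)$. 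Collecting the prefactor $\tfrac{\tilde{C}_0 k}{r^2}$ from \eqref{eq5.18} and using the identity $k+\tfrac1{k^2}=\tfrac{k^3+1}{k^2}$ to recognize $C_0\bigl(k+\tfrac1{k^2}\bigr)=\tfrac{C_0(k^3+1)}{k^2}$ gives \eqref{eq5.20} with constant $C_3$. The energy-norm estimate \eqref{eq5.21} is proved in complete parallel, starting instead from \eqref{eq5.19}: the double sum $\Sigma_N$ is identical, and only the starting prefactor changes from $\tfrac{\tilde{C}_0 k}{r^2}$ to $\tfrac{C\tilde{C}_0^2 k(1+k)}{r^2}$, which is exactly the difference between $C_3$ and $C_4$.

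The main obstacle is the double-sum bookkeeping: one must choose the right comparison $4C_0^{1/2}(1+k)\le 2\sqrt{C_0}(2k+3)$ so that the $(1+k)^j$ growth of $C(j,k)^{1/2}$ is absorbed into the $(2k+3)^j$ weight, thereby decoupling a pure geometric factor $(2\sqrt{C_0})^j$ that can be summed in $j$ independently of $n$ and leave the clean single-ratio series in $n$. Getting the numerical constants to match exactly (the leading $C_0$ versus $2C_0$, the slightly larger $j=0$ term, and the role of $\overline{c}_0$) requires some care, but any resulting slack is harmless since it is absorbed into the generic constants $\tilde{C}_0$ and $C$.
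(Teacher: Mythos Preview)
Your proof is correct and is essentially the same as the paper's: both insert the $H^2$ bound \eqref{eq3.6a} into Theorem~\ref{thm5.1}, use the comparison $4\sqrt{C_0}(1+k)\le 2\sqrt{C_0}(2k+3)$ to extract a pure $(2\sqrt{C_0})^j$ factor from $C(j,k)^{1/2}$, and then sum the two resulting geometric series in $j$ and $n$. The only cosmetic difference is that the paper keeps the exact prefactor $\tfrac12$ in $C(j,k)^{1/2}=\tfrac{C_0^{1/2}}{2}\bigl(4\sqrt{C_0}(1+k)\bigr)^j$ for $j\ge1$, which is why its final constant comes out as $C_0$ rather than your $2C_0$.
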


\begin{proof}
By \eqref{eq3.6a} and the definition of $C(j,k)$ we get
\begin{align*}
&\sum_{n=0}^{N-1}\sum_{j=0}^n \veps^n (2k+3)^{n-j} \E\bigl(\norm{u_j}_{H^{s}(D)}\bigr)\\
&\quad
\leq \Bigl(k+\frac{1}{k^2}\Bigr) \|f\|_{L^2(\Ome,L^2(D))}
\sum_{n=0}^{N-1}\sum_{j=0}^n \veps^n (2k+3)^{n-j} C(j,k)^{\frac12} \\
&\quad
=\frac{C_0^{\frac12}(k^3+1)}{2k^2} \|f\|_{L^2(\Ome,L^2(D))}
\sum_{n=0}^{N-1}\sum_{j=0}^n \veps^n 4^j C_0^{\frac{j}{2}} (1+k)^j (2k+3)^{n-j}\\
&\quad
\leq \frac{C_0(k^3+1)}{k^2(2\sqrt{C_0}-1)}
\cdot \frac{1-\bigl(2\sqrt{C_0} (2k+3)\veps \bigr)^N }{1-2\sqrt{C_0} (2k+3)\veps }
\|f\|_{L^2(\Ome,L^2(D))}.
\end{align*}
The above inequality and \eqref{eq5.18} yield \eqref{eq5.20}. Similarly, the above
inequality and \eqref{eq5.19} give \eqref{eq5.21}.  The proof is complete.
\end{proof}

Combining \eqref{eq5.2}--\eqref{eq5.31}, \eqref{eq5.20},  \eqref{eq5.21},
\eqref{eq4.31} and \eqref{eq4.32} we get

\begin{theorem}\label{thm5.5}
Under the assumptions that $u_n\in L^2(\Ome,H^2(D))$ for $n\geq 0$, $k^3h^2r^{-2}=O(1)$
and $\veps= O(k^{-1})$, there hold
\begin{align}\label{eq5.23}
\E\bigl( \|\E(u^\veps) - \Psi_N^h\|_{L^2(D)}) \leq C_1 \veps^N +C_2 h^2 + C_3 M^{-\frac12},\\
\E\bigl( \|\E(u^\veps) - \Psi_N^h\|_{H^1(D)}) \leq C_4 \veps^N +C_5 h + C_6 M^{-\frac12},\label{eq5.24}
\end{align}
where $C_j=C_j(C_0,\hat{C}_0,k,\veps)$ are positive constants for $j=1,2,\cdots, 6$.
\end{theorem}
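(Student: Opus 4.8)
The plan is to exploit the three-term splitting already recorded above,
\[
\E(u^\veps)-\Psi^h_N=\bigl(\E(u^\veps)-\E(U^\veps_N)\bigr)+\bigl(\E(U^\veps_N)-\E(U^h_N)\bigr)+\bigl(\E(U^h_N)-\Psi^h_N\bigr),
\]
and to bound the three summands separately by the finite-modes error \eqref{eq5.2}, the spatial error \eqref{eq5.20}--\eqref{eq5.21}, and the statistical error \eqref{eq5.30}--\eqref{eq5.31}. After taking the $L^2(D)$ (respectively $H^1(D)$) norm and the outer expectation, I would invoke the triangle inequality, observing that the first two summands are \emph{deterministic} --- each is of the form $\E(\,\cdot\,)$ --- so the outer $\E$ acts trivially on them, whereas the third summand still carries the randomness of the Monte Carlo samples $\{\omega_j\}_{j=1}^M$.

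For the finite-modes summand, writing $\E(u^\veps)-\E(U^\veps_N)=\E(u^\veps-U^\veps_N)$ and applying Jensen's and the Cauchy--Schwarz inequalities gives $\norm{\E(u^\veps-U^\veps_N)}_{H^j(D)}\le\bigl(\E(\norm{u^\veps-U^\veps_N}_{H^j(D)}^2)\bigr)^{1/2}$, which \eqref{eq5.2} controls by a multiple of $\sigma^N$ with $\sigma=4\veps C_0^{1/2}(1+k)$. Factoring $\sigma^N=\bigl(4C_0^{1/2}(1+k)\bigr)^N\veps^N$ and absorbing the $k$- and $C_0$-dependent prefactor into the constant yields the $C_1\veps^N$ (respectively $C_4\veps^N$) contribution. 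For the spatial summand, $\E(U^\veps_N)-\E(U^h_N)=\E(U^\veps_N-U^h_N)$ is again deterministic, so by Jensen's inequality its norm is dominated by $\E(\norm{U^\veps_N-U^h_N}_{\cdot})$, i.e.\ exactly the right-hand sides of \eqref{eq5.20}/\eqref{eq5.21} from Theorem \ref{thm5.2}; this supplies the $C_2h^2$ and $C_5h$ terms. The hypotheses $k^3h^2r^{-2}=O(1)$ and $\veps=O(k^{-1})$ are precisely what make Theorem \ref{thm5.2} applicable and keep the geometric factor in \eqref{eq5.20a}--\eqref{eq5.21a} bounded. The statistical summand is delivered by \eqref{eq5.30} and \eqref{eq5.31}, which were obtained from the Monte Carlo estimates \eqref{eq4.31}--\eqref{eq4.32} and Lemma \ref{lem4.2}; these give the $C_3M^{-1/2}$ and $C_6M^{-1/2}$ terms. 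Collecting the three contributions and relabelling the prefactors as $C_1,\dots,C_6$, each depending only on $C_0,\hat{C}_0,k,\veps$, then produces \eqref{eq5.23}--\eqref{eq5.24}.

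I expect no serious analytic obstacle, since every ingredient is already proved and the work is essentially bookkeeping of constants. The one point requiring care is the $H^1(D)$ estimate \eqref{eq5.24}: the spatial and statistical bounds are stated in the broken DG norm $\norm{\cdot}_{1,h,D}$, so I must note that for the $H^1(D)$-conforming quantities $u^\veps$ and $U^\veps_N$ the interelement jump terms vanish and $\norm{\cdot}_{1,h,D}$ dominates $\norm{\nab\,\cdot}_{L^2(D)}$, which together with the $L^2(D)$ control recovers the full $H^1(D)$ norm of the error. A secondary matter is to verify that the smallness conditions $\sigma<1$ and $\hat\sigma=4\veps\hat{C}_0(1+k)<1$ needed by \eqref{eq5.2} and \eqref{eq5.30}--\eqref{eq5.31} are consistent with the stated scaling $\veps=O(k^{-1})$, which holds once the implied constant in that scaling is taken small enough.
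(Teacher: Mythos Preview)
Your proposal is correct and follows exactly the approach the paper takes: the paper's proof is a one-line statement that the result follows by combining \eqref{eq5.2}--\eqref{eq5.31}, \eqref{eq5.20}, \eqref{eq5.21}, \eqref{eq4.31} and \eqref{eq4.32}, which is precisely the three-term splitting and term-by-term bounding you describe. Your additional remarks about the relation between $\norm{\cdot}_{1,h,D}$ and the $H^1(D)$ norm and about the smallness conditions $\sigma,\hat\sigma<1$ are points the paper leaves implicit.
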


%\vfill\eject
%%%
\section{Numerical experiments}\label{sec-6}
In this section we present a series of numerical experiments in order to accomplish the following:
\begin{itemize}
\item compare our MCIP-DG method using the multi-modes expansion to a classical MCIP-DG method,
\item illustrate examples using our MCIP-DG method in which the perturbation parameter 
$\veps$ satisfies the constraint required by the convergence theory,
\item illustrate examples using our MCIP-DG method in which the perturbation 
parameter constraint is violated, 
\item illustrate examples using our MCIP-DG method in which we allow $\eta$ to be large in magnitude.
\end{itemize}

In all our numerical experiments we use the spatial domain $D =(-0.5,0.5)^2$. To partition
$D$ we use a uniform triangulation $\mathcal{T}_h$.  For a positive integer $n$,   
$\mathcal{T}_{1/n}$ denotes the triangulation of $D$ consisting of $2n^2$ congruent isosceles 
triangles with side lengths $1/n,1/n,$ and $\sqrt{2}/n$.  Figure \ref{fig:MeshExample} gives the sample 
triangulation $\mathcal{T}_{1/10}$.

\begin{figure}[htb]
\centering
\includegraphics[scale=0.3]{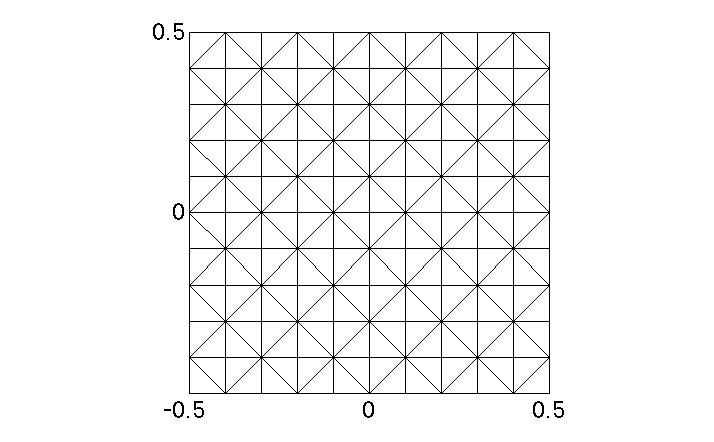}
\caption{Triangulation $\mathcal{T}_{1/10}$ \label{fig:MeshExample}}
\end{figure}

To implement the random noise $\eta$, we note that $\eta$ only appears in the integration 
component of our computations.  Therefore, we made the choice to implement $\eta$ only 
at quadrature points of the triangulation.  To simulate the random media, we let $\eta$ 
be an independent random number chosen from a uniform distribution on some closed interval 
at each quadrature point. Figure \ref{fig:RandomMedia} shows an example of such random media.

\begin{figure}[htb]
\centerline{\includegraphics[scale = .25]{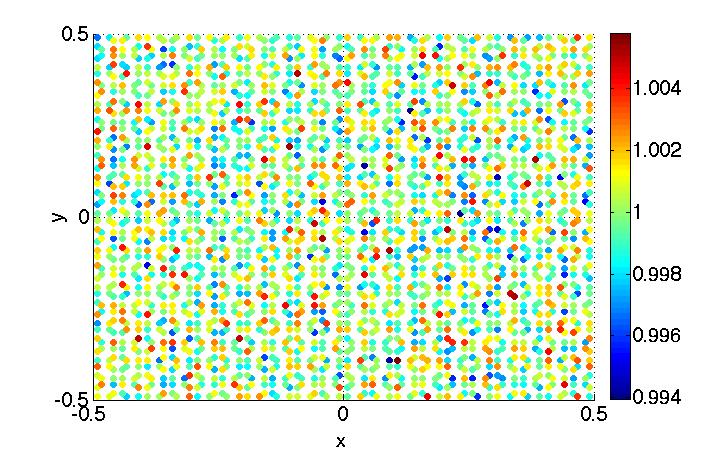} 
\includegraphics[scale = .25]{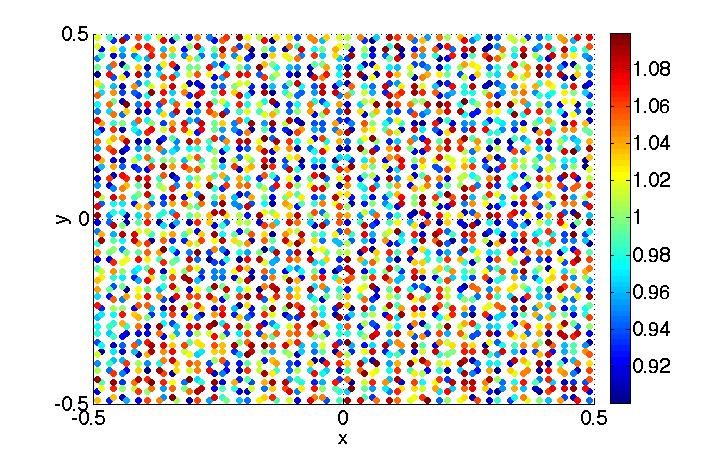}}
\caption{Discrete average media $\frac{1}{M}\sum_{j=1}^{M} \alpha(\ome_j,\cdot)$ (left) 
and a sample media $\alpha(\ome,\cdot)$ (right) computed for $h = 1/20$, $\veps = 0.1$, 
$\eta(\cdot,x) \thicksim \mathcal{U}[-1,1]$, and $M = 1000$} \label{fig:RandomMedia}
\end{figure}

\subsection{MCIP-DG with multi-modes expansion compared to classical MCIP-DG}
The goal of this subsection is to verify the accuracy and efficiency of the proposed 
MCIP-DG with the multi-modes expansion.  As a benchmark we compare this method to 
the classical MCIP-DG (i.e. without utilizing the multi-modes expansion). Throughout this section 
$\tilde{\Psi}^h$ is used to denote the computed approximation to $\mathbb{E}(u)$ using the 
classical MCIP-DG. 

In this subsection we set $f = 1$, $k = 5$, $1/h = 50$, $M = 1000$, and $\veps = 1/(k+1)$. 
Here $\veps$ is chosen with the intent of satisfying the constraint set by the convergence 
theory in the preceding section. $\eta$ is sampled as described above from a uniform 
distribution on the interval $[0,1]$.  $\Psi^h_N$ is computed for $N = 1,2,3,4,5$. 

In our first test we compute $\| \Psi^h_N - \tilde{\Psi}^h\|_{L^2(D)}$. The results are 
displayed in Figure \ref{fig:ModesVsFullMonte}.  As expected, we find that the difference 
between $\Psi^h_N$ and $\tilde{\Psi}^h$ is very small.  We also observe that we are 
benefited more by the first couple modes while the help from the later modes is relatively small. 

To test the efficiency of our MCIP-DG method with multi-modes expansion, we compare 
the CPU time for computing $\Psi^h_N$ and $\tilde{\Psi}^h$.  Both methods are implemented 
on the same computer using Matlab. Matlab's built-in LU factorization is called 
to solve the linear systems.  The results of this test are shown in 
Table \ref{table:ModesVsFullMonte}.  As expected, we find that the use of the multi-modes 
expansion improves the CPU time for the computation considerably.  In fact, the table shows 
that this improvement is an order of magnitude.  Also, as expected, as the number of modes 
used is increased the CPU time increases in a linear fashion.

\begin{figure}[htb]
\centering
\includegraphics[scale=0.25]{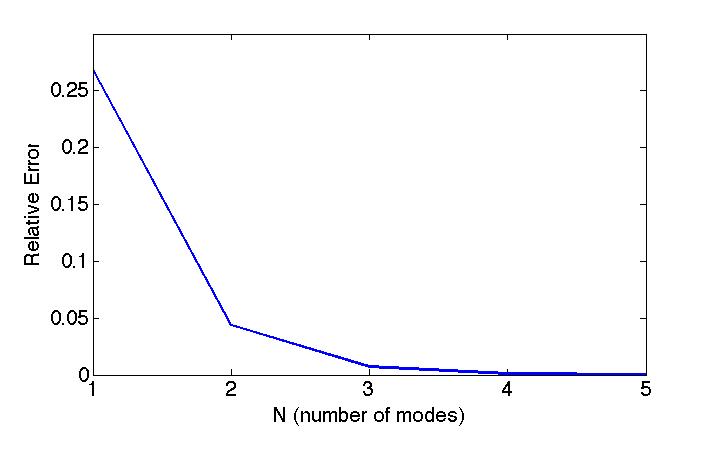}
\caption{$L^2$-norm error between $\Psi^h_N$ computed using MCIP-DG 
with the multi-modes expansion and $\tilde{\Psi}^h$ computed using the classical MCIP-DG.}
\label{fig:ModesVsFullMonte}
\end{figure}
\begin{table}[htb]
\centering
\begin{tabular}{| c | c |}
	\hline
	Approximation & CPU Time (s) \\
	\hline
	$\tilde{\Psi}^h$ & $3.4954 \times 10^5$ \\
	\hline
	$\Psi^h_1$ & $1.0198 \times 10^4$ \\
	\hline
	$\Psi^h_2$ & $2.0307 \times 10^4$ \\
	\hline
	$\Psi^h_3$ & $3.0037 \times 10^4$ \\
	\hline
	$\Psi^h_4$ & $3.9589 \times 10^4$ \\
	\hline
	$\Psi^h_5$ & $4.9011 \times 10^4$ \\
	\hline
\end{tabular}
\caption{CPU times required to compute the MCIP-DG multi-modes approximation $\Psi^h_N$ 
and classical MCIP-DG approximation $\tilde{\Psi}^h$.} \label{table:ModesVsFullMonte}
\end{table}

\subsection{More numerical tests}
The goal of this subsection is to demonstrate the approximations obtained by our 
MCIP-DG method with multi-modes expansion using different magnitudes of parameter $\veps$ 
and different magnitudes of the random noise $\eta$.  We only consider the case $0 < \veps < 1$ 
in order to legitimize the series expansion $u^{\veps}$.  With this in mind, we then
increase the magnitude of $\eta$ to simulate examples with large noise.  Similar to the 
numerical experiments from \cite{Feng_Wu09}, we choose the function 
$f = \sin\big(k \alpha(\ome, \cdot) r \big)/r$, where $r$ is the radial distance from 
the origin and $\alpha(\ome, \cdot)$ is implemented as described in the beginning 
of this section.  Since our intention is to observe what happens as we vary $\veps$
and $\eta$, we fix $k = 50$, $h = 1/100$, and $M = 1000$. 

In Figures \ref{fig:EpsTest1a} and \ref{fig:EpsTest1b}, we set $\veps = 0.02$ and 
$|\eta|\leq 1$ with the intent of observing the constraints set in the convergence 
theory from the preceding section. In Figure \ref{fig:EpsTest1a} we present plots of the 
magnitude of the computed mean $\Re \big(\Psi^h_3\big)$ and a computed sample 
$\Re \big(U^h_3 \big)$, respectively, over the whole domain $D$.  Figure \ref{fig:EpsTest1b} 
gives the plots of a cross section of the computed mean $\Re \big(\Psi^h_3\big)$ and 
a computed sample $\Re \big(U^h_3 \big)$, respectively, over the line $y = x$.  
In this first example we observe that the computed sample does not differ greatly 
from the computed mean because $\veps$ is very small.

In Figures \ref{fig:EpsTest3a}--\ref{fig:EpsTest10b}, we fix $|\eta| \leq 1$ and 
increase $\veps$ past the constraint established in the preceding convergence theory.  
As expected, we see that as $\veps$ increases the computed sample differs more from the 
computed mean. We also observe that as $\veps$ increases the phase of the wave remains 
relatively intact but the magnitude of the wave becomes more uniform. 
%This behavior is expected and we believe that MCIP-DG with multi-modes expansion is of use even when 
%$\veps$ does not satisfy the constraint $\veps = O \big(\frac{1}{k+1} \big)$. 

In Table \ref{table:ModesVSFullMonteError1} the relative error (measured in 
the $L^2$-norm) between the multi-modes approximation $\Psi^h_N$ and the classical
Monte Carlo approximation $\Psi^h$ is given for $\veps = 0.02,0.1,0.5,0.8$.  In this 
table only three modes (i.e., $N = 3$) are used.  Recall that the convergence theory in this case 
only holds for $\veps$ on the order of the first value $0.02$.  That being said, we observe 
that the approximations corresponding to $\veps = 0.1$ and $\veps = 0.5$ are relatively 
close to those obtained using the classical Monte Carlo method.  Another observation that can 
be made from Table \ref{table:ModesVSFullMonteError1} is that as $\veps$ increases the 
relative error increases.  This is expected from the convergence theory.

Recall that the error predicted in the convergence theory can be bounded by a term with the 
factor $\veps^N$.  Thus for $\veps$ relatively large, one must use more modes to decrease the error.  
Keeping this in mind, Table \ref{table:ModesVSFullMonteError2} records the relative error 
(measured in the $L^2$-norm) between the multi-modes approximation $\Psi^h_N$ and the classical 
Monte Carlo approximation $\Psi^h$ is given for $\veps = 0.5,0.8$ and $N = 4, 5, 6, 7$.  
At this point, we observe that the relative error decreases as N increases when $\veps = 0.5$.  
On the other hand, the relative error increases as $N$ increases when $\veps = 0.8$. 
From Tables \ref{table:ModesVSFullMonteError1} and \ref{table:ModesVSFullMonteError2} 
we observe that multi-modes expansion $\Psi^h_N$ is relatively accurate (measured against 
an approximation from the classical Monte Carlo method) even in cases when $\veps$ 
does not satisfy the constraint set forth in the convergence theory.  We also observe 
that when $\veps$ becomes too large, the multi-modes expansion no longer agrees with 
the classical Monte Carlo method.

\begin{table}[htb]
\centering
\begin{tabular}{| c || c | c | c | c |}
        \hline
        $\veps$ & $0.02$ & $0.1$ & $0.5$ & $0.8$ \\
        \hline
        Relative $L^2$ Error & $3.0125 \times 10^{-4}$ & $6.0073\times 10^{-4}$ &
        $0.2865$ & $1.6979$ \\
        \hline
\end{tabular}
%
%\begin{table}[htb]
%\centering
%\begin{tabular}{| c || c |}
%	\hline
	%$\veps$ & $\frac{\| \Psi^h_N - \tilde{\Psi}^h\|_{L^2(D)}}{\|\tilde{\Psi}^h\|_{L^2(D)}} $  \\
%	$\veps$ & $N = 3$   \\
%	\hline
%	\hline
%	$0.02$ & $3.0125 \times 10^{-4}$ \\
%	\hline
%	$0.1$ & $6.0073 \times 10^{-4}$ \\
%	\hline
%	$0.5$ & $0.2865$ \\
%	\hline
%	$0.8$ & $1.6979$ \\
%	\hline 
%\end{tabular}
\caption{$L^2$-norm relative error between the multimodes expansion approximation $\Psi^h_3$ 
and the classical Monte Carlo approximation $\tilde{\Psi}^h$. 
%This data is for the case $k = 50$, $1/h = 100$, $N = 2$, and $M = 1000$.
}\label{table:ModesVSFullMonteError1}
\end{table}

\begin{table}[htb]
\centering
\begin{tabular}{| c || c | c | c | c |}
         \hline
	$\veps$ & $N = 4$ & $N = 5$ & $N = 6$ & $N = 7$ \\
	\hline
	\hline
	$0.5$ & $0.2866$ & $0.1125$ & $0.1137$ & $0.0554$\\
	\hline
	$0.8$ & $1.7036$ & $1.6713$ & $1.6839$ & $1.7887$  \\
	\hline
\end{tabular}
\caption{$L^2$-norm relative error between the multimodes expansion approximation $\Psi^h_N$ 
and the classical Monte Carlo approximation $\tilde{\Psi}^h$.
%This data is for the case $k = 50$, $1/h = 100$, $N = 3,4$ , and $M = 1000$.
}\label{table:ModesVSFullMonteError2}
\end{table}

%\clearpage

\begin{figure}[htb]
\centerline{\includegraphics[scale=.25]{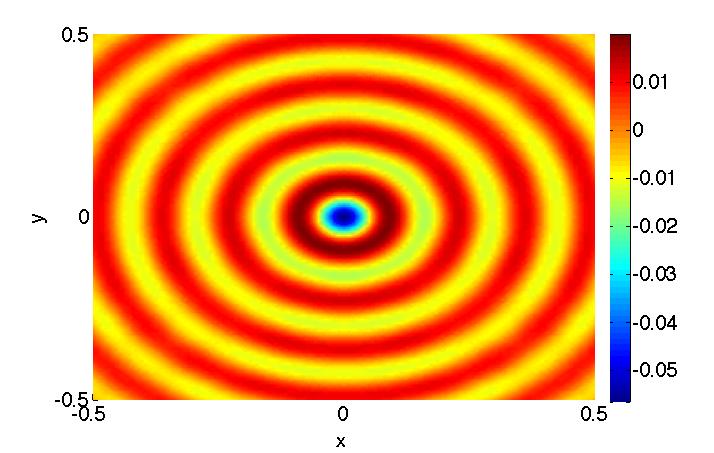} \includegraphics[scale = .25]{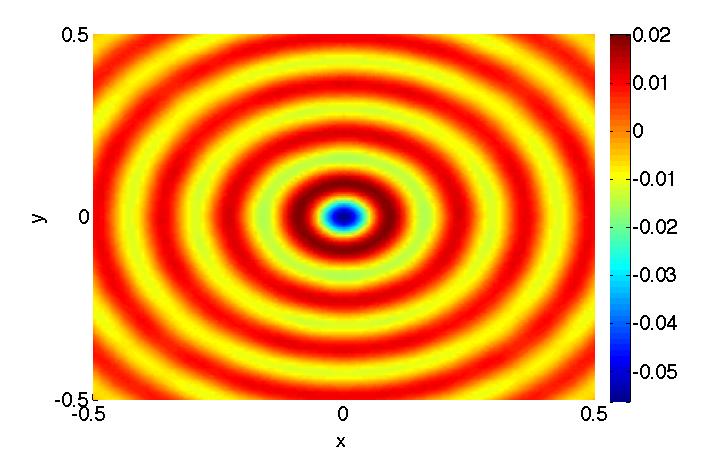}}
\caption{$\Re \big(\Psi^h_3 \big)$ (left) and $\Re \big(U^h_3\big)$(right) computed for $k = 50$, $h = 1/100$, 
$\veps = 0.02$, $\eta(\cdot,x) \thicksim \mathcal{U}[-1,1]$, and $M=1000$.}  
\label{fig:EpsTest1a}
\end{figure}

\begin{figure}[htb]
\centerline{
\includegraphics[scale = .25]{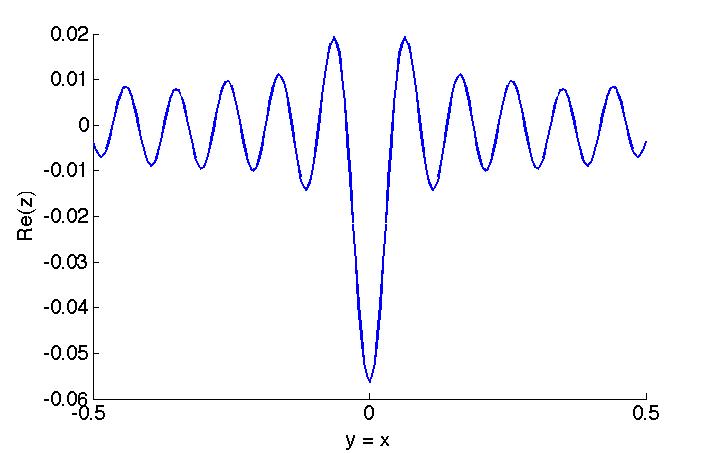} 
\includegraphics[scale = .25]{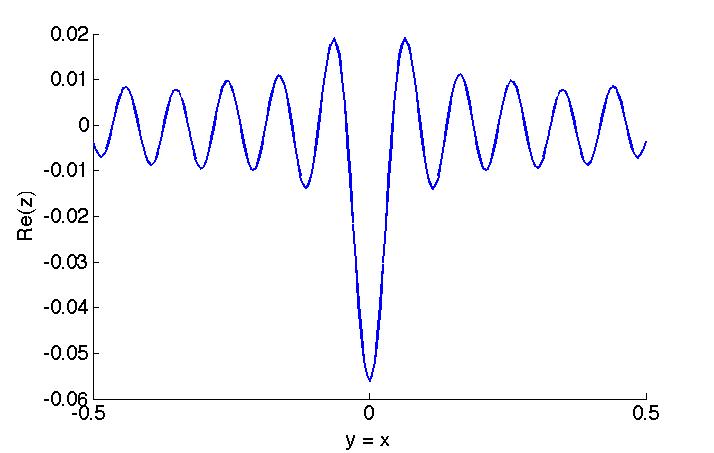}}
\caption{Cross sections of $\Re \big(\Psi^h_3 \big)$ (left) and $\Re \big(U^h_3\big)$ (right) computed for 
$k = 50$, $h = 1/100$, $\veps = 0.02$, $\eta(\cdot,x) \thicksim \mathcal{U}[-1,1]$, and
$M=1000$, over the line $y = x$.} 
\label{fig:EpsTest1b}
\end{figure}

\begin{figure}[htb]
\centerline{
\includegraphics[scale = .25]{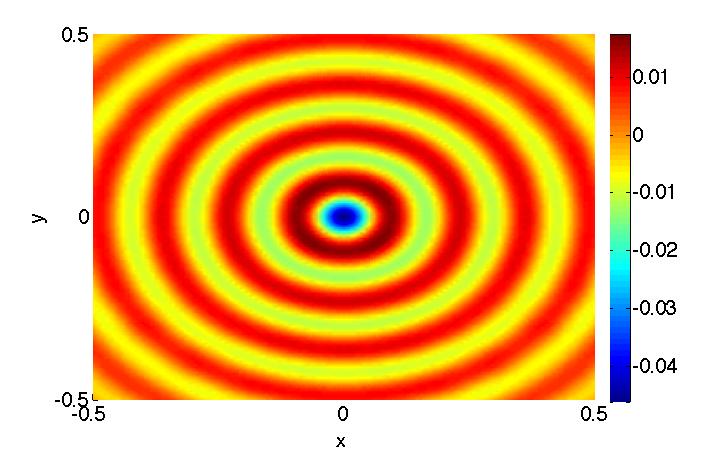} 
\includegraphics[scale = .25]{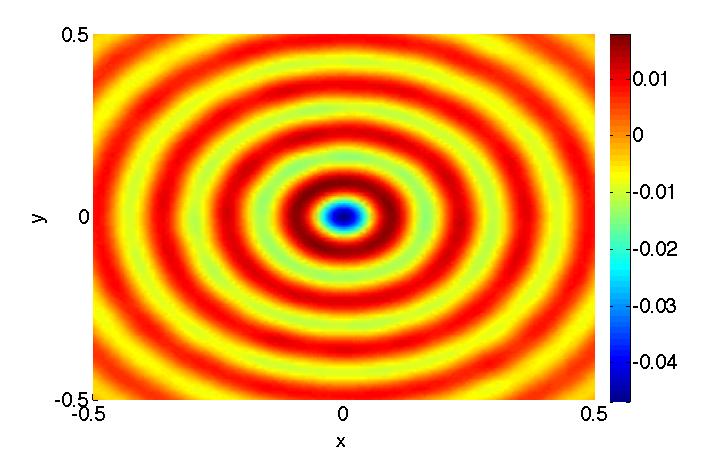} }
\caption{$\Re \big(\Psi^h_3 \big)$ (left) and $\Re \big(U^h_3\big)$ (right) 
computed for $k = 50$, $h = 1/100$, $\veps = 0.1$, $\eta(\cdot,x) \thicksim 
\mathcal{U}[-1,1]$, and $M = 1000$.} \label{fig:EpsTest3a}
\end{figure}

\begin{figure}[htb]
\centerline{
\includegraphics[scale=.25]{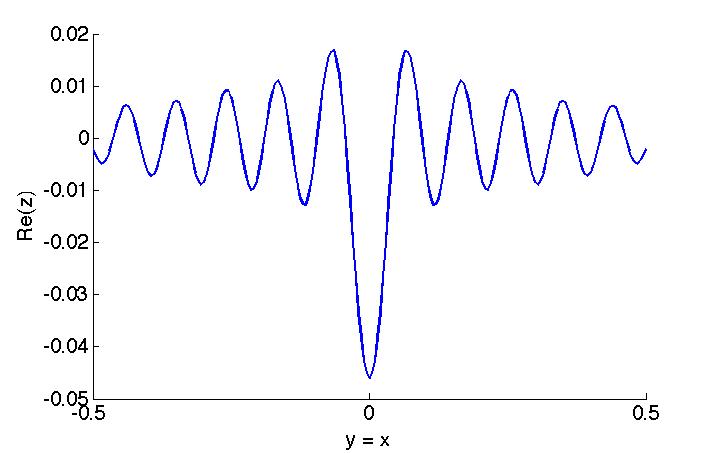}  
\includegraphics[scale = .25]{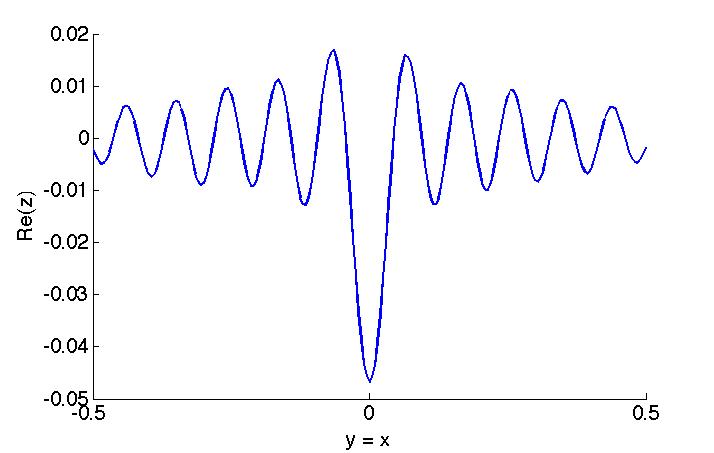}}
\caption{Cross sections of $\Re \big(\Psi^h_3 \big)$ (left) and $\Re \big(U^h_3 \big)$ (right) 
computed for $k = 50$, $h = 1/100$, $\veps = 0.1$, $\eta(\cdot,x) \thicksim 
\mathcal{U}[-1,1]$, and $M = 1000$.}  \label{fig:EpsTest3b}
\end{figure}

\begin{figure}[htb]
\centerline{
\includegraphics[scale = .25]{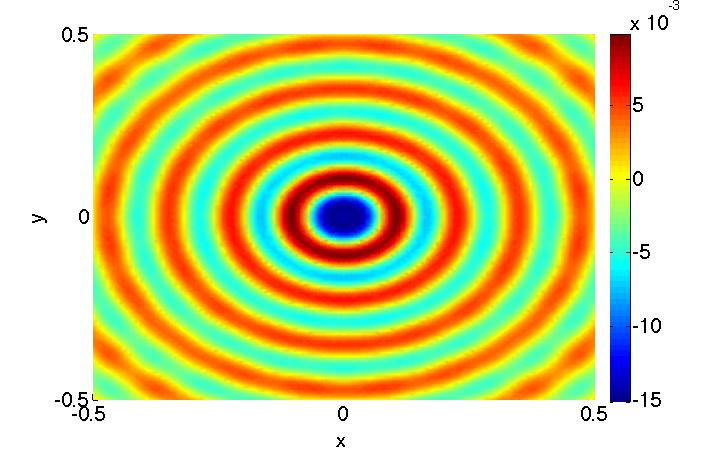} 
\includegraphics[scale = .25]{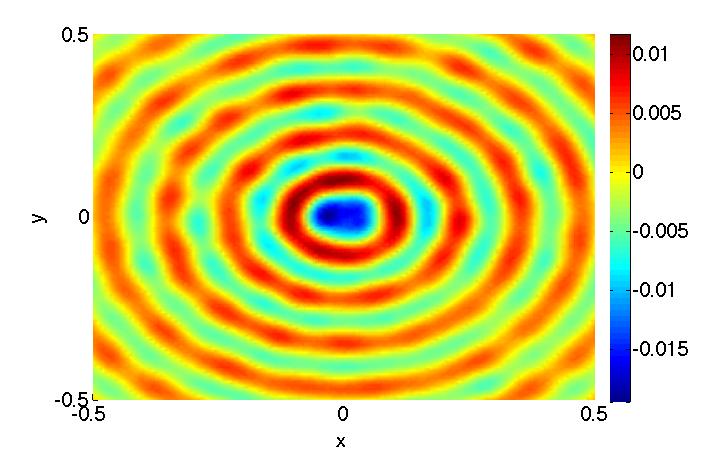}}
\caption{$\Re \big(\Psi^h_3 \big)$ (left) and $\Re \big(U^h_3\big)$ (right) computed 
for $k = 50$, $h = 1/100$, $\veps = 0.5$, $\eta(\cdot,x) \thicksim \mathcal{U}[-1,1]$, 
and $M = 1000$.}  \label{fig:EpsTest7a}
\end{figure}

\begin{figure}[htb]
\centerline{
\includegraphics[scale = .25]{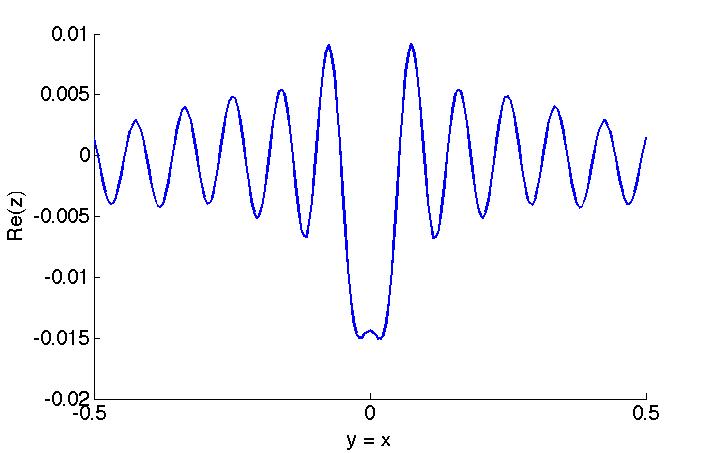} 
\includegraphics[scale = .25]{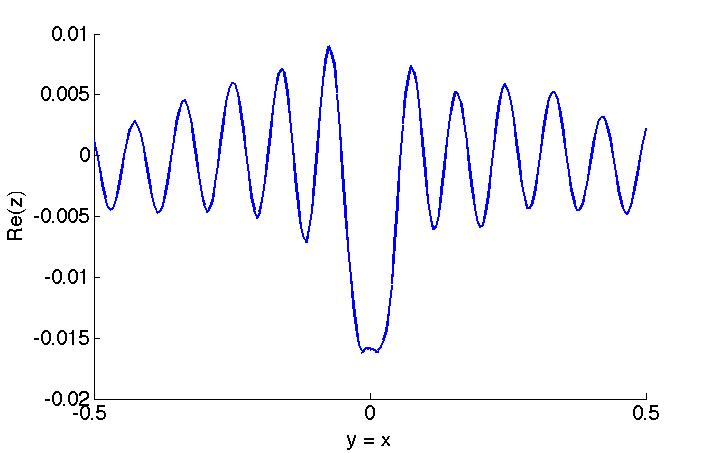}}
\caption{Cross sections of $\Re \big(\Psi^h_3 \big)$ (left) and $\Re \big(U^h_3 \big)$ (right) 
computed for $k = 50$, $h = 1/100$, $\veps = 0.5$, $\eta(\cdot,x) \thicksim \mathcal{U}[-1,1]$, 
and $M = 1000$.} \label{fig:EpsTest7b}
\end{figure}

\begin{figure}[htb]
\centerline{
\includegraphics[scale = .25]{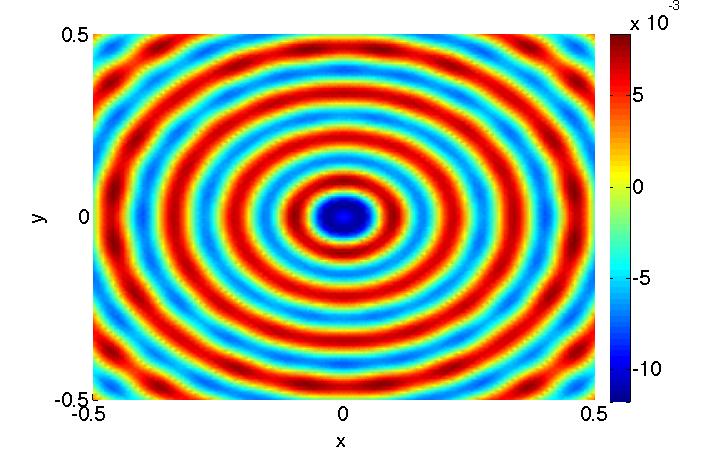} 
\includegraphics[scale = .25]{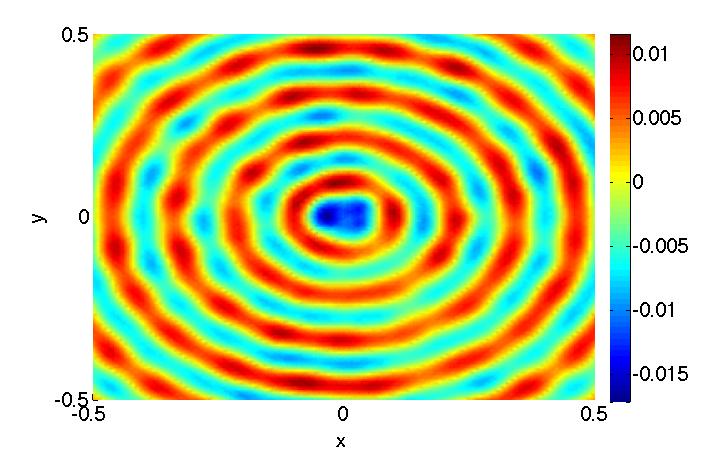}}
\caption{$\Re \big(\Psi^h_3 \big)$ (left) and $\Re \big(U^h_3 \big)$ (right) computed 
for $k = 50$, $h = 1/100$, $\veps = 0.8$, $\eta(\cdot,x) \thicksim \mathcal{U}[-1,1]$, 
and $M = 1000$.} \label{fig:EpsTest10a}
\end{figure}

\begin{figure}[htb]
\centerline{
\includegraphics[scale = .25]{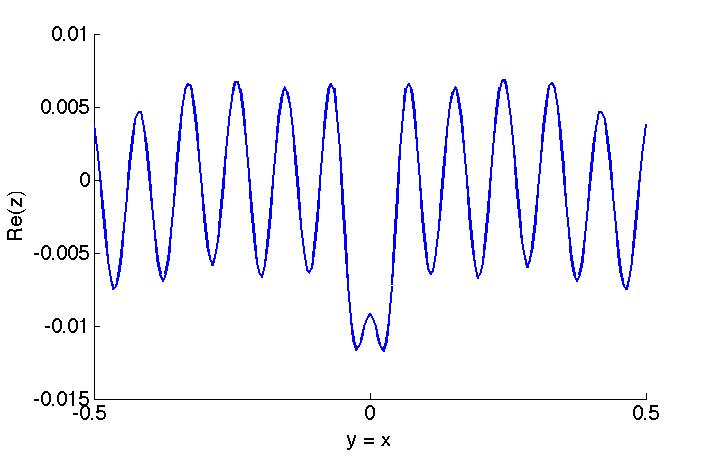} 
\includegraphics[scale = .25]{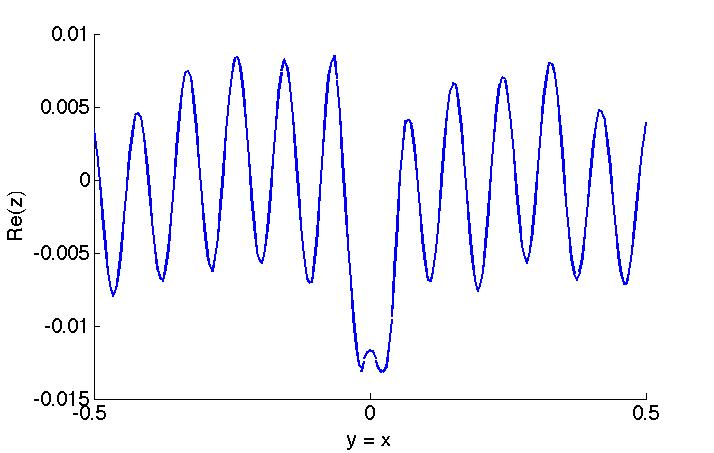}}
\caption{Cross sections of $\Re \big(\Psi^h_3 \big)$ (left) and $\Re \big(U^h_3 \big)$ (right) 
computed for $k = 50$, $h = 1/100$, $\veps = 0.8$, $\eta(\cdot,x) \thicksim \mathcal{U}[-1,1]$, 
and $M = 1000$.} \label{fig:EpsTest10b}
\end{figure}

In Figures \ref{fig:EtaTest1a}--\ref{fig:EtaTest10b}, we fix $\veps = 0.9$ and increase 
the magnitude of $\eta$. We observe that as the magnitude of random noise increases 
the difference between computed sample and computed mean increases.  We also observe that 
the phase of the computed wave remains intact until the random noise becomes too large 
(see Figures \ref{fig:EtaTest10a} and \ref{fig:EtaTest10b}). At this point we believe that 
increasing the number of samples is necessary in order to capture the mean of the large noise.

\begin{figure}[htb]
\centerline{
\includegraphics[scale = .25]{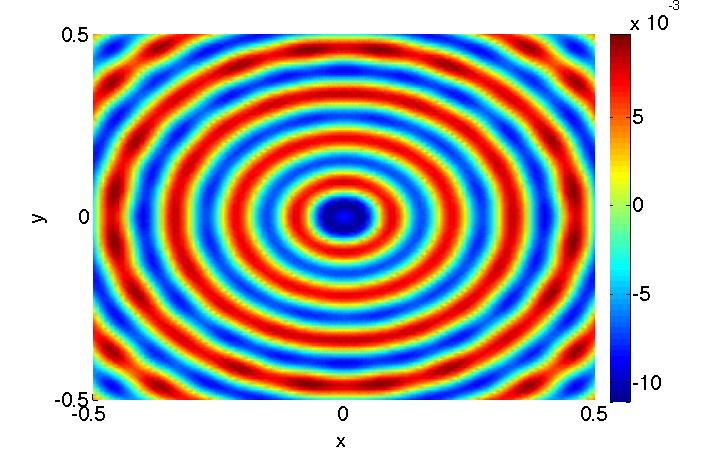} 
\includegraphics[scale = .25]{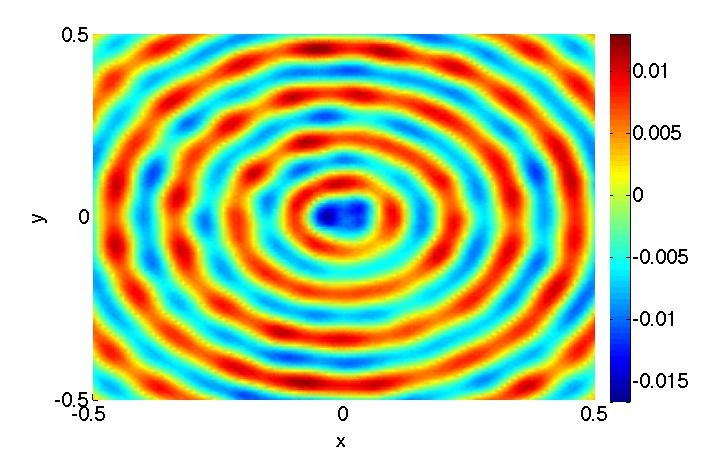}}
\caption{$\Re \big(\Psi^h_3 \big)$ (left) and $\Re \big(U^h_3 \big)$ (right) computed 
for $k = 50$, $h = 1/100$, $\veps = 0.9$, $\eta(\cdot,x) \thicksim \mathcal{U}[-1,1]$, 
and $M = 1000$.} \label{fig:EtaTest1a}
\end{figure}

\begin{figure}[htb]
\centerline{
\includegraphics[scale = .25]{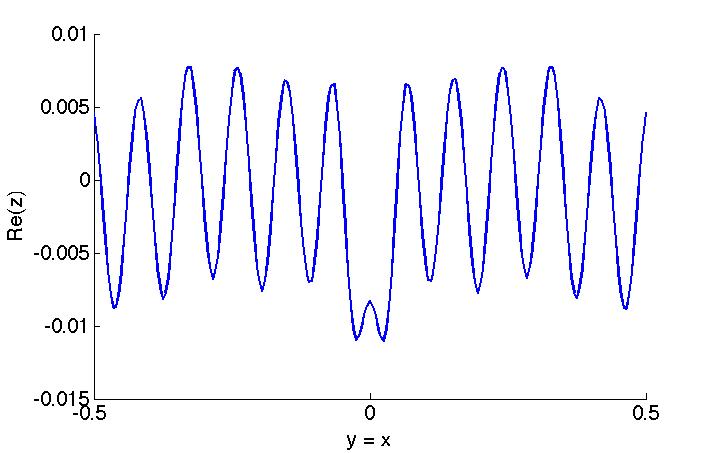} 
\includegraphics[scale = .25]{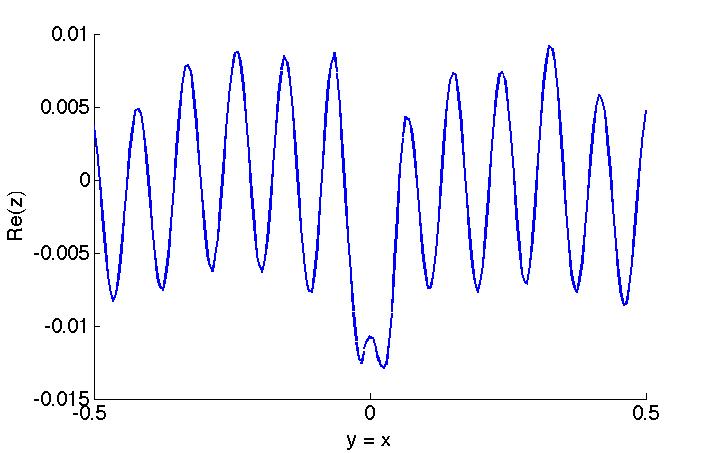}}
\caption{Cross sections of $\Re \big(\Psi^h_3 \big)$ (left) and $\Re \big(U^h_3 \big)$ (right) 
computed for $k = 50$, $h = 1/100$, $\veps = 0.9$, $\eta(\cdot,x) \thicksim \mathcal{U}[-1,1]$, 
and $M = 1000$.} \label{fig:EtaTest1b}
\end{figure}

\begin{figure}[htb]
\centerline{
\includegraphics[scale = .25]{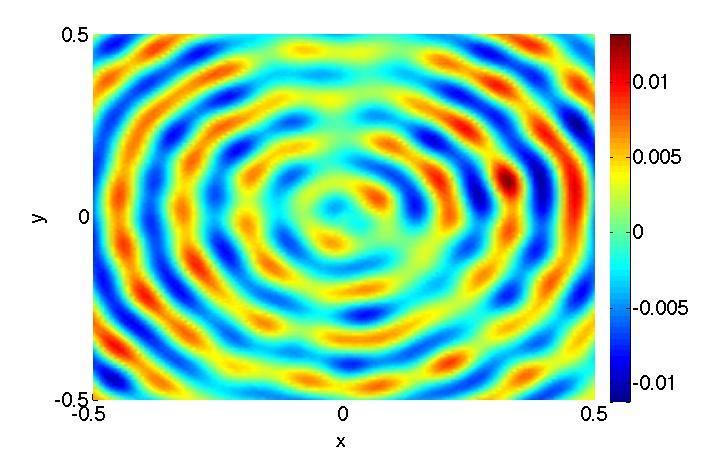} 
\includegraphics[scale = .25]{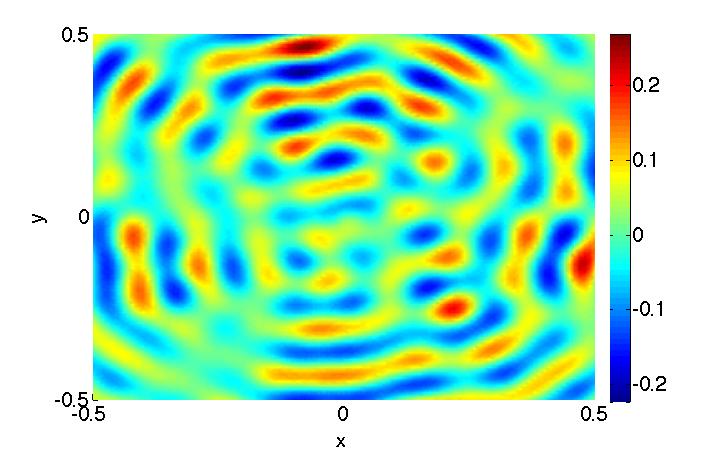}}
\caption{$\Re \big(\Psi^h_3 \big)$ (left) and $\Re \big(\Psi^h_3 \big)$ (right) computed 
for $k = 50$, $h = 1/100$, $\veps = 0.9$, $\eta(\cdot,x) \thicksim \mathcal{U}[-10,10]$, 
and $M = 1000$.} \label{fig:EtaTest3a}
\end{figure}

\begin{figure}[htb]
\centerline{
\includegraphics[scale = .25]{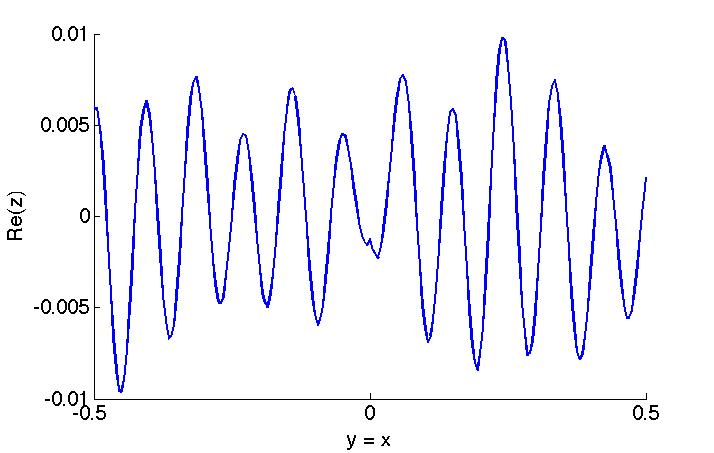} 
\includegraphics[scale = .25]{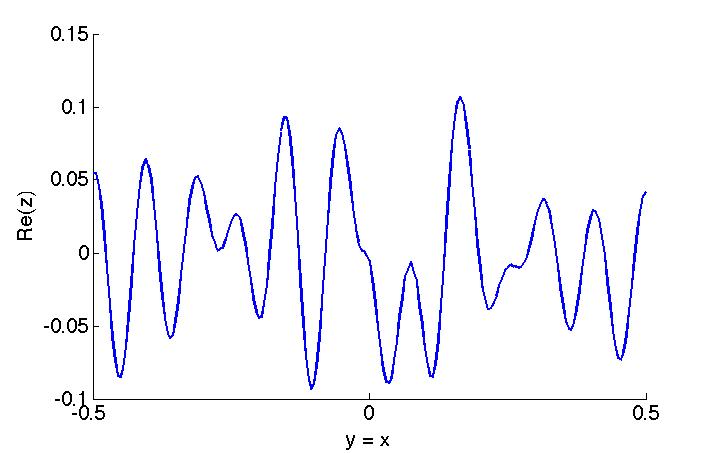}}
\caption{Cross sections of $\Re \big(\Psi^h_3 \big)$ (left) and $\Re \big(U^h_3 \big)$ (right) 
computed for $k = 50$, $h = 1/100$, $\veps = 0.9$, $\eta(\cdot,x) \thicksim \mathcal{U}[-10,10]$, 
and $M = 1000$.} \label{fig:EtaTest3b}
\end{figure}

\begin{figure}[htb]
\centerline{
\includegraphics[scale = .25]{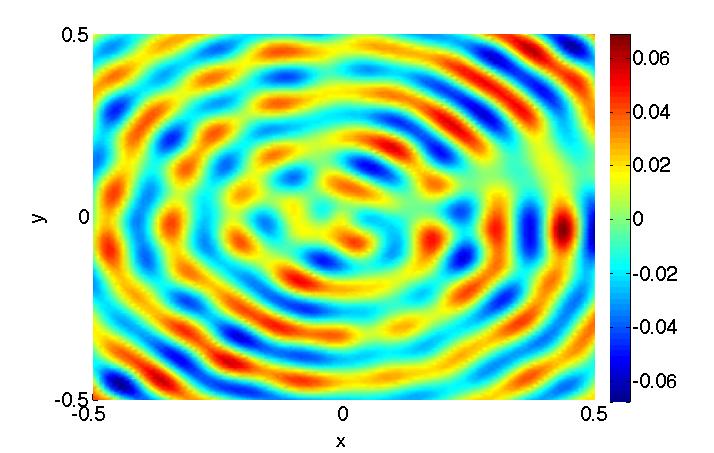} 
\includegraphics[scale = .25]{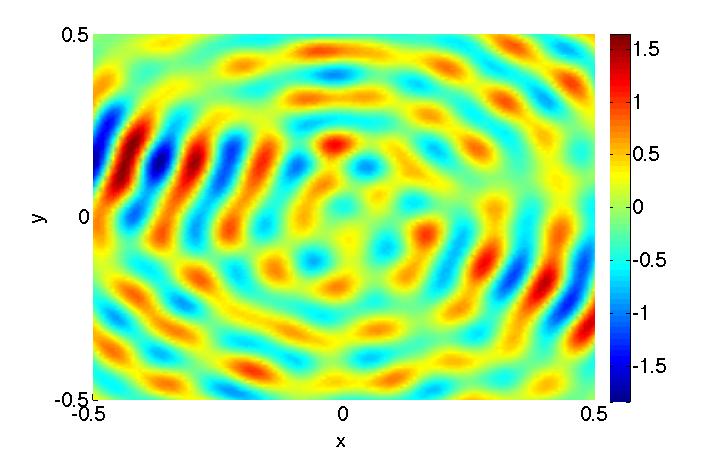}}
\caption{$\Re \big(\Psi^h_3 \big)$ (left) and $\Re \big(U^h_3 \big)$ (right) computed 
for $k = 50$, $h = 1/100$, $\veps = 0.9$, $\eta(\cdot,x) \thicksim \mathcal{U}[-25,25]$, 
and $M = 1000$.} \label{fig:EtaTest6a}
\end{figure}

\begin{figure}[htb]
\centerline{
\includegraphics[scale = .25]{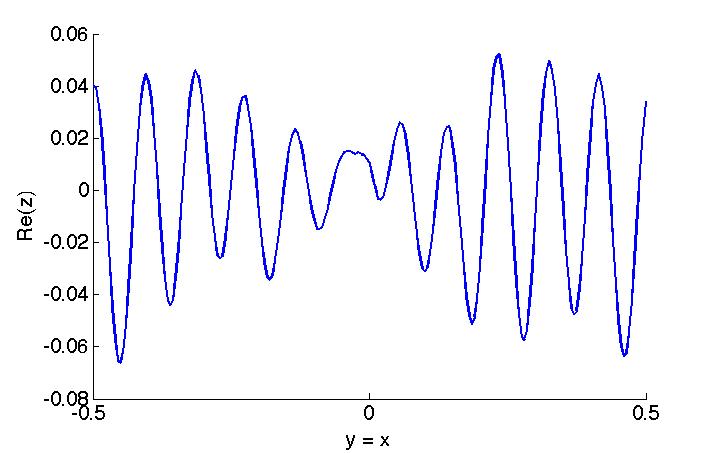} 
\includegraphics[scale = .25]{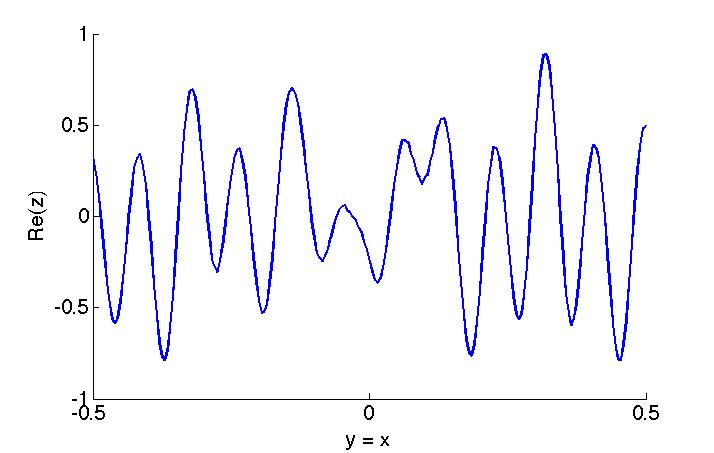}}
\caption{Cross sections of $\Re \big(\Psi^h_3 \big)$ (left) and $\Re \big(U^h_3 \big)$ (right) 
computed for $k = 50$, $h = 1/100$, $\veps = 0.9$, $\eta(\cdot,x) \thicksim \mathcal{U}[-25,25]$, 
and $M = 1000$.} \label{fig:EtaTest6b}
\end{figure}

\begin{figure}[htb]
\centerline{
\includegraphics[scale = .25]{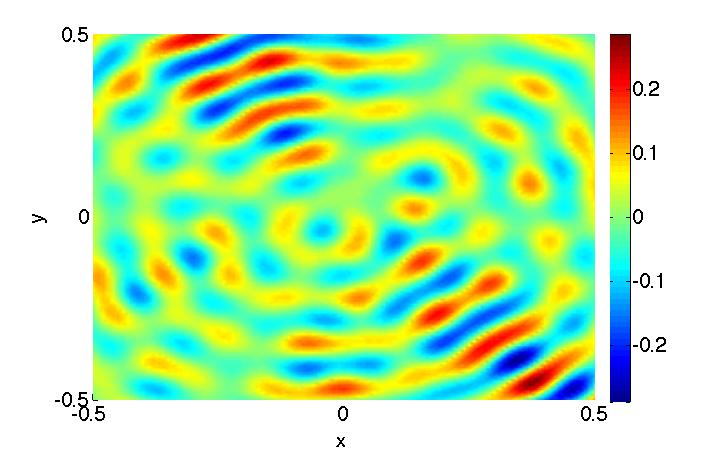} 
\includegraphics[scale = .25]{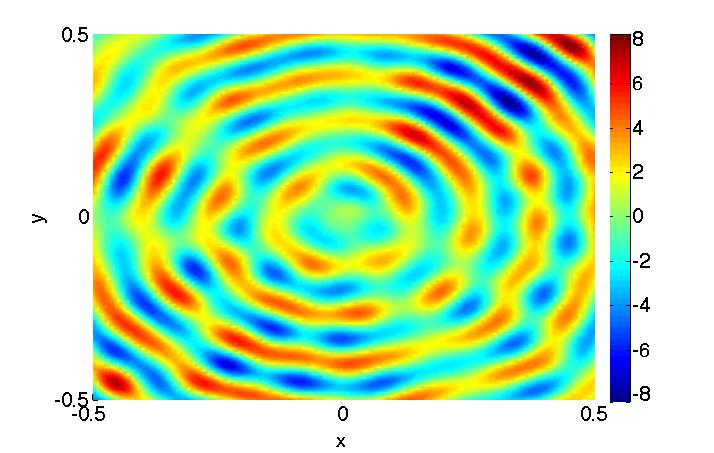}}
\caption{$\Re \big(\Psi^h_3 \big)$ (left) and $\Re \big(U^h_3 \big)$ (right) computed 
for $k = 50$, $h = 1/100$, $\veps = 0.9$, $\eta(\cdot,x) \thicksim \mathcal{U}[-50,50]$, 
and $M = 1000$.} \label{fig:EtaTest10a}
\end{figure}
\clearpage
\begin{figure}[htb]
\centerline{
\includegraphics[scale = .25]{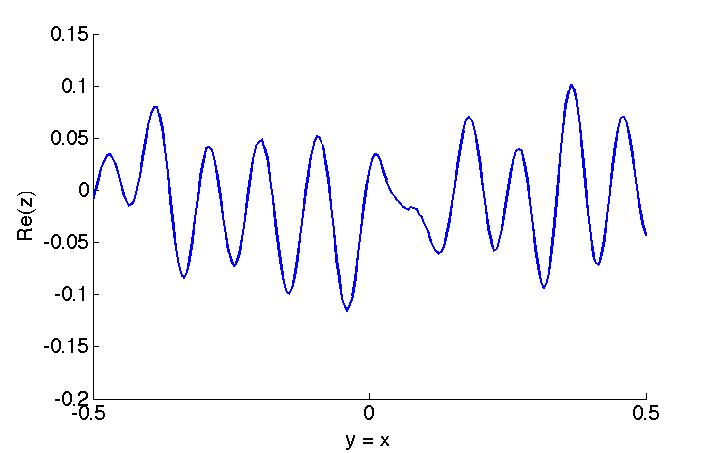} 
\includegraphics[scale = .25]{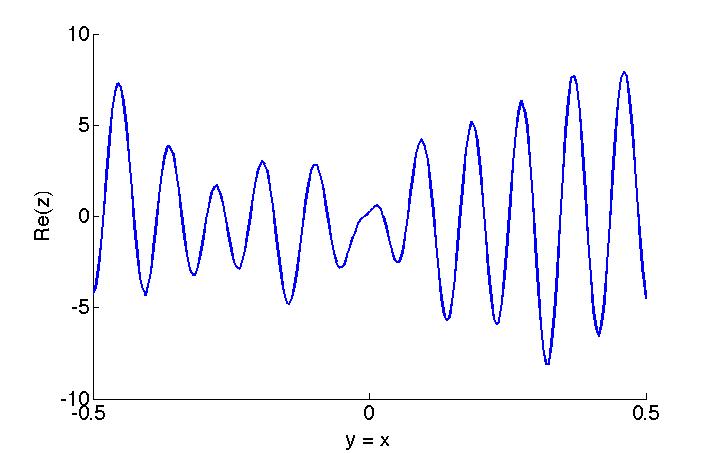}}
\caption{Cross sections of $\Re \big(\Psi^h_3 \big)$ (left) and $\Re \big(U^h_3 \big)$ (right) 
computed for $k = 50$, $h = 1/100$, $\veps = 0.9$, $\eta(\cdot,x) \thicksim \mathcal{U}[-50,50]$, 
and $M = 1000$.} \label{fig:EtaTest10b}
\end{figure}

\bigskip
{\bf Acknowledgments.} This project was initiated while both the first and second authors
were long-term visitors (as a new direction professor and a postdoc, respectively) of IMA 
at University of Minnesota in the spring of 2013. Both authors are grateful for the financial 
support and the visiting opportunity provided by IMA.

%%%%%%%%%%%%%%%%%%%%%%%%%%%%%

\end{document}